\documentclass[psamsfonts]{amsart}  
\usepackage{amssymb}

\usepackage[dvips]{graphicx}

\markboth{left head}{right head}

\usepackage{amsmath,amsthm,amscd,amssymb,amsfonts, amsbsy}
\usepackage[mathscr]{eucal}
\usepackage{enumitem}


\usepackage{bbm}

%
%

\newcommand{\beq}{\begin{equation}}
\newcommand{\eeq}{\end{equation}}
\newcommand{\bea}{\begin{eqnarray}}
\newcommand{\eea}{\end{eqnarray}}
\newcommand{\beas}{\begin{eqnarray*}}
\newcommand{\eeas}{\end{eqnarray*}}
\newcommand{\expr}[1]{\left( #1 \right)}

%
%
\newtheorem{theorem}{Theorem}[section]

\newtheorem{proposition}[theorem]{Proposition}

\newtheorem{lemma}[theorem]{Lemma} 
\newtheorem{remark}[theorem]{Remark}
\newtheorem{example}[theorem]{Example}
\newtheorem{examples}[theorem]{Examples}
\newtheorem{foo}[theorem]{Remarks}

%
%
\usepackage[usenames,dvipsnames]{color}

\usepackage[colorlinks,citecolor=red,pagebackref,hypertexnames=false, breaklinks]{hyperref}

\newcommand{\ang}[1]{\left<#1\right>}  

\newcommand{\brak}[1]{\left(#1\right)}    
\newcommand{\crl}[1]{\left\{#1\right\}}   


\newcommand{\N}[1]{\left\|#1\right\|}     
\newcommand{\abs}[1]{\left|#1\right|}     





\newcommand{\bE}{\mathbb E}

\newcommand{\bH}{\mathbb H}
\newcommand{\M}{\mathbb M}
\newcommand{\bN}{\mathbb N}
\newcommand{\bP}{\mathbb P}
\newcommand{\R}{\mathbb R}

\newcommand{\Acal}{\mathcal A}

\newcommand{\F}{\mathcal F}

\newcommand{\Scal}{\mathcal S}
\newcommand{\Tcal}{\mathcal T}

\newcommand{\Y}{\mathcal Y}

\newcommand{\mx}{\mathbf x}
\newcommand{\my}{\mathbf y}

\newcommand{\ld}{\lambda}

\newcommand{\wtvp}{\widetilde\varphi}

\newcommand{\SU}{ \mathbb{ SU}}
\newcommand{\su}{\mathfrak{su}}




\begin{document}

\title[martingale transforms and their projections]{Gundy-Varopoulos martingale transforms  and their projection operators on manifolds and vector bundles}
\author{R. Ba\~nuelos}
\address{Department of Mathematics, Purdue University, West Lafayette, IN 47907, USA}
\email{banuelos@math.purdue.edu}
\author{F. Baudoin} 
\address{Department of Mathematics, University of Connecticut, Storrs, CT 06269}
\email{fabrice.baudoin@uconn.edu}
\author{L. Chen}
\address{Department of Mathematics, University of Connecticut, Storrs, CT 06269}
\email{li.4.chen@uconn.edu}

\thanks{R. Ba\~nuelos supported in part by NSF Grant 1403417-DMS. F. Baudoin supported in part by NSF Grant 1660031-DMS}

\begin{abstract}  This paper proves the $L^p$ boundedness of generalized first order Riesz transforms obtained as conditional expectations of martingale transforms  \`a la  Gundy-Varopoulos for quite general diffusions on manifolds and vector bundles.  Several specific examples and applications are presented: Lie groups of compact type, the Heisenberg group, SU(2),  and Riesz transforms on forms and spinors.
\end{abstract}

\maketitle

\tableofcontents

\section{Introduction}

\subsection{Context}

In Euclidean harmonic analysis, Riesz transforms play a fundamental role in singular integral theory, Hardy space theory and their applications to various areas of analysis and partial differential equations.  The classical Calder\'on-Zygmund theory shows  that Riesz transforms are bounded on $L^p$, for all $1<p<\infty$. Sharp $L^p$ estimates for first order Riesz transforms can be proved using either analytic \cite{IM96} or probabilistic tools  \cite{BW95}.

The study of Riesz transform in different geometric settings has a long history now with a very extensive literature. We would like to mention that Stein \cite{Ste70} introduced Riesz transforms on  compact Lie groups and Strichartz \cite{Str83} first asked whether one could extend their $L^p$ boundedness to complete Riemannian manifolds. Riesz transforms on $k$-forms associated with the Hodge Laplacian were also formulated in   \cite{Str83}.  Ever since, there has been an explosion of literature on this topic using numerous  analytic, geometric, and  probabilistic methods. In the following, we will give a brief overview of previous results. 

 Using the martingale approach via  Littlewood-Paley inequalities as in  
 \cite{Mey84}, 
 Bakry \cite{Bak87} proved that the Riesz transforms are bounded on $L^p$, $1<p<\infty$, on manifolds with non-negative Ricci curvature. An extension was  obtained for the Riesz transform associated with the Hodge Laplacian on $L^p$ spaces of $k$-forms for all $1<p<\infty$ under assumption of Weitzenb\"ock curvature. In a series of papers \cite{LiXD08,LiXD10I,LiXD10II,LiXD14I,LiXD14II},  X. D. Li gave dimension-free estimates  for Riesz transforms in various geometric settings, including  manifolds with assumptions on Bakry-Emery curvature or Weitzenb\"ock curvature, and  K\"ahler manifolds. See also \cite{vanNV17} for  the special case of the   Witten Laplacians.

Another widely-adopted approach to study Riesz transform in various geometric settings is to use the Calder\'on-Zygmund theory for which heat kernel estimates play an essential role.  See, for instance, the setting of Riemannian manifolds in \cite{CD99,LiHQ99, CD03, LiHQ03I, LiHQ03II,ACDH04,AMR08,CMO15,CCFR17} and the references therein, especially  \cite[Section 1.3]{ACDH04} with a quite complete list of previous results,  and the setting of Lie groups, or manifolds with sub-elliptic operators  in \cite{Ale92, CMZ96, BG13,BauB16}. We note that, unlike the martingale approach, the Calder\'on-Zygmund theory do not give constants which are independent of dimension.

There are still many other methods to treat Riesz transforms in different settings. For instance, it was shown in \cite{CCH06} that the Riesz transforms on $n$-dimensional manifolds with Euclidean ends  are bounded on $L^p$ if and only if $p\in (1,n)$. This result was generalized in \cite{Car07, Dev14} and in \cite{GS15} to $k$-forms for asymptotically conic manifolds. See also \cite{MPR15} for Riesz transforms of $k$-forms on the Heisenberg group. Last but not the least, we refer to  \cite{CarbD13} for Bellman function techniques to study Riesz transforms on manifolds under Bakry-\'Emery type curvature assumptions.

\subsection{Main results}

In the present paper, we aim to study a class of operators under very general setting which are projections of martingale transforms in the style of the now classical  Gundy-Varopoulos construction on $\R^d$. We are interested in the near sharp estimates or at least  dimension free estimates for these operators. More precisely, let $\M$ be a smooth manifold with a smooth measure $\mu$. Let $X_1,\cdots,X_d$ be locally Lipschitz vector fields defined on $\M$. We consider the Schr\"odinger operator 
\[
L=-\sum_{i=1}^d X_i^* X_i+V,
\]
where the $X_i^*$ denotes the formal adjoint of $X_i$ with respect to $\mu$ and where $V:\M\to \R$ is a non-positive smooth potential. Assume that $L$ is essentially self-adjoint on the space $\mathcal{S}(\M)$ of smooth and rapidly decreasing functions. 
Denote by $P_y=e^{-y \sqrt{-L}}$ the Poisson semigroup. For any $1\le i\le d$, we consider the operator
\[
T_if=\int_0^{+\infty} y P_y \brak{\sqrt{-L} X_i-X_i^* \sqrt{-L}} P_y fdy.
\]

Our main result is the following.
\begin{theorem}\label{thm:MT}
Fix $1<p<\infty$ and set $p^*=\max\{p,\frac{p}{p-1}\}$. Then for every $f\in \mathcal{S}(\M)$,
\begin{equation}\label{TA-norm1}
\|T_i f\|_p \le \left(\frac{3}{2}\right) (p^*-1)\|f\|_p.
\end{equation}

If the potential $V\equiv0$, then
\begin{equation}\label{TA-norm2}
\|T_i f\|_p \le \frac{1}{2}\cot\!\expr{\frac{\pi}{2 p^*}} \|f\|_p.
\end{equation}
\end{theorem}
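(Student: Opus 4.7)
The plan is to realize $T_i f$ as a scalar multiple of the conditional expectation, given the terminal position of the underlying diffusion, of a martingale transform built on the space-time (background radiation) process associated with the Poisson semigroup $P_y$, and then to invoke the sharp Burkholder and Choi $L^p$-bounds for martingale transforms in the Ba\~nuelos--Wang framework.

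Fix $y_0>0$ and consider the space-time diffusion $Z_t=(X_t,Y_t)$ on $\M\times(0,\infty)$, where $X_t$ is the diffusion with generator $L_0=-\sum_i X_i^*X_i$ and $Y_t=y_0+\beta_t$ for an independent standard Brownian motion $\beta$; set $\tau=\inf\{t:Y_t=0\}$. Since $u(x,y)=P_y f(x)$ solves $(L+\partial_y^2)u=0$, It\^o's formula combined with a Feynman--Kac correction for the potential $V$ shows that
\[
M_t^f \;=\; e^{\int_0^{t\wedge\tau} V(X_s)\,ds}\,u(Z_{t\wedge\tau})
\]
is a martingale whose stochastic-integral part splits into a \emph{tangential} piece driven by the Brownian components $W^j$ of $X_t$ (integrand $X_j u$) and a \emph{normal} piece driven by $\beta$ (integrand $\partial_y u$). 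For each $i$, I introduce the transformed martingale
\[
N_t^i \;=\; \int_0^{t\wedge\tau} e^{\int_0^s V(X_r)\,dr}\,\partial_y u(Z_s)\,dW_s^i \;-\; \int_0^{t\wedge\tau} e^{\int_0^s V(X_r)\,dr}\,X_i u(Z_s)\,d\beta_s.
\]

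The pair $(M^f,N^i)$ is differentially subordinate in the Ba\~nuelos--Wang sense, so the sharp martingale-transform inequality yields $\|N_\tau^i\|_p\le (p^*-1)\|M_\tau^f\|_p$; when $V\equiv 0$ the two stochastic integrals defining $N^i$ are driven by mutually orthogonal Brownian components, and Choi's improved bound $\tfrac{1}{2}\cot(\pi/(2p^*))$ applies. The decisive step is then the identification of the projection onto the underlying diffusion: one computes $\bE^{y_0}[N_\tau^i\mid X_\tau=\cdot]$ by conditioning on the exit time, integrates by parts in $y$ via $\partial_y P_y=-\sqrt{-L}\,P_y$, and performs a further integration by parts against $\mu$ to convert the $X_i$-integrand into $X_i^*$; letting $y_0\to\infty$ and using $P_yP_y=P_{2y}$ yields, after a change of variable, a constant multiple of
\[
\int_0^{+\infty} y\,P_y\brak{\sqrt{-L}\,X_i - X_i^*\,\sqrt{-L}}P_y f\,dy \;=\; T_i f.
\]
Since conditional expectation contracts $L^p$ and $\|M_\tau^f\|_p\le\|f\|_p$ (with a bounded multiplicative cost from the Feynman--Kac exponential when $V\not\equiv 0$), dividing by this explicit constant produces the bounds $\tfrac{3}{2}(p^*-1)\|f\|_p$ and $\tfrac{1}{2}\cot(\pi/(2p^*))\|f\|_p$, respectively.

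The principal obstacle is carrying out this identification cleanly: every constant must be tracked through the double integration-by-parts so that the commutator $\sqrt{-L}\,X_i - X_i^*\,\sqrt{-L}$ emerges exactly, with precisely the weight $y$ and prefactor in the definition of $T_i$; the interaction between the Feynman--Kac exponential and the $X_i$-integral is what ultimately produces the factor $\tfrac{3}{2}$ instead of~$1$. A secondary but nontrivial issue is justifying It\^o's formula and the required $L^p$-martingale estimates in the present generality---locally Lipschitz vector fields, possibly non-complete $\M$, and unbounded $V$---which will require localising by stopping times, and exploiting the essential self-adjointness of $L$ on $\mathcal{S}(\M)$ to approximate $f$ by sufficiently regular, well-decaying test functions.
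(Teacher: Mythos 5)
Your outline of the $V\equiv 0$ case matches the paper's argument: the Gundy--Varopoulos conditional-expectation representation, differential subordination of the transformed martingale, and the orthogonal-martingale bound (which, incidentally, is the Ba\~nuelos--Wang $\cot(\pi/(2p^*))$ estimate, not Choi's constant; the extra factor $\tfrac12$ comes from the $-\tfrac12$ prefactor in the representation, not from the inequality itself).

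For $V\not\equiv 0$ there is a genuine gap, and it is not merely a matter of bookkeeping constants. You define the transformed martingale
\[
N_t^i=\int_0^{t\wedge\tau} e^{\int_0^s V}\,\partial_y u\,dW_s^i-\int_0^{t\wedge\tau} e^{\int_0^s V}\,X_i u\,d\beta_s,
\]
i.e.\ a martingale transform of $M_t^f$ with the Feynman--Kac factor $e^{\int_0^s V}$ carried \emph{inside} the integrand. This process does \emph{not} project to $T_i f$. When you pair it against a test function $g$ via $\bE[g(Y_\tau)N_\tau^i]$, the martingale $\bE[g(Y_\tau)\mid\mathcal F_t]$ is $Q^0g(Y_t,B_t)$, the Poisson extension for the operator $\sum X_i^2+X_0$ \emph{without} the potential, so the It\^o isometry produces the wrong harmonic extension of $g$ and an uncancelled $e^{\int_0^s V}$. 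The paper's Lemma~\ref{lem:CE} instead represents $T_i f$ through the Feynman--Kac \emph{twisted} process $Z_\tau=e^{\int_0^\tau V}\int_0^\tau e^{-\int_0^s V}\,dN_s$, where $N$ is the transform of the \emph{raw} stochastic integral with integrand $(\nabla,\partial_y)^{\mathrm T}Q^V f$ (no exponential). The inner $e^{-\int_0^s V}$ is exactly what cancels the $e^{\int_0^s V}$ coming from the martingale part of $M^g$. This twisted process is not a martingale transform of $M^f$, so the Ba\~nuelos--Wang bound does not apply directly; one must invoke the Ba\~nuelos--Os\k{e}kowski theorem (Theorem~\ref{thm:BO15}) for such processes, whose comparison martingale is the raw integral $\tilde M_\tau=\int_0^\tau(\nabla,\partial_y)^{\mathrm T}Q^V f\,(d\beta,dB)$. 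That martingale is \emph{not} bounded by $\|f\|_p$: one needs the Lenglart--L\'epingle--Pratelli submartingale maximal inequality to show $\|\tilde M_\tau\|_p\le(p+1)\|f\|_p$, then restrict to $1<p\le 2$ so that $p+1\le 3$, and finally pass to $p>2$ by a duality argument using $T_{A}^*=T_{A^*}$. None of these three ingredients appear in your proposal, and your heuristic attribution of the factor $\tfrac32$ to a ``bounded multiplicative cost from the Feynman--Kac exponential'' on $\|M_\tau^f\|_p$ is misleading: since $V\le 0$ there is no such cost, $\|M_\tau^f\|_p\le\|f\|_p$ outright, and taken at face value your scheme would yield the bound $\tfrac12(p^*-1)\|f\|_p$ for the wrong operator.
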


It is interesting to  note here that the constant in \eqref{TA-norm2}, up to the factor $\frac{1}{2}$, is the sharp $L^p$ constant for the classical Hilbert transform in $\R$ and the first order Riesz transforms on $\R^d$; see \cite{MR0312140}, \cite{IM96}, \cite{BW95}.  
 
Applications of Theorem \ref{thm:MT} (and of the slightly more general Theorem \ref{thm:MT2} given below) include, but are not limited to, the following examples.
\begin{proposition}
Let $G$ be a  Lie group of compact type endowed with a bi-invariant Riemannian structure. Then
\[
\N{X_i(\sqrt{-L})^{-1} f}_{p} \le \cot\!\expr{\frac{\pi}{2 p^*}}  \N{f}_p.  
\]
\end{proposition}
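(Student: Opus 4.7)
The plan is to show that on a Lie group of compact type with a bi-invariant Riemannian structure, the operator $T_i$ of Theorem~\ref{thm:MT} collapses to a multiple of the Riesz transform $X_i(\sqrt{-L})^{-1}$, so that the sharp bound \eqref{TA-norm2} applies directly.

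First I would take $X_1,\dots,X_d$ to be left-invariant vector fields dual to an orthonormal basis of $\mathfrak{g}$ with respect to the bi-invariant inner product. Bi-invariance of the Haar measure gives $X_i^* = -X_i$, so the operator of the paper becomes $L = \sum_{i=1}^d X_i^2$, the Laplace--Beltrami operator, which coincides up to sign with the Casimir element. Since the inner product is ad-invariant, the Casimir lies in the centre of the universal enveloping algebra, and hence $[X_i,L]=0$ for every $i$. Consequently $X_i$ commutes with $\sqrt{-L}$ and with every Poisson operator $P_y$. The potential is identically zero, so the sharp bound \eqref{TA-norm2} is the relevant one.

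Next I would use these commutations to collapse the integrand of $T_i$: the bracket becomes
\[
\sqrt{-L}\,X_i - X_i^*\sqrt{-L} \;=\; \sqrt{-L}\,X_i + X_i\sqrt{-L} \;=\; 2\,X_i\sqrt{-L}.
\]
Pulling $X_i\sqrt{-L}$ outside the integral, using $P_y P_y = P_{2y}$, and substituting $s=2y$, one gets
\[
T_i f \;=\; 2\,X_i\sqrt{-L}\int_0^{\infty} y\,P_{2y}f\,dy \;=\; \tfrac12\,X_i\sqrt{-L}\int_0^{\infty} s\,P_s f\,ds \;=\; \tfrac12\,X_i(\sqrt{-L})^{-1}f,
\]
the last equality by spectral calculus: $\int_0^{\infty} s\,e^{-s\sqrt{\lambda}}\,ds = 1/\lambda$ for $\lambda>0$ gives $\int_0^{\infty} s\,P_s\,ds = (-L)^{-1}$ on the orthogonal complement of $\ker L$, and $X_i$ annihilates $\ker L$ in any event.

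Applying \eqref{TA-norm2} then yields $\|X_i(\sqrt{-L})^{-1}f\|_p = 2\|T_i f\|_p \le \cot\!\brak{\pi/(2p^*)}\|f\|_p$, the claimed estimate. The only delicate point is justifying the integral representation of $(\sqrt{-L})^{-1}$ on a suitable dense class; this is routine via spectral theory on $\mathcal{S}(\M)$ after projecting off $\ker L$ when $G$ has a compact factor. No further geometric input is needed beyond the centrality of the Casimir and the conclusion of Theorem~\ref{thm:MT}.
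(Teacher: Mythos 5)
Your proof is correct and follows essentially the same route as the paper: use $X_i^*=-X_i$ and the commutation of $X_i$ with $\sqrt{-L}$ (both consequences of bi-invariance) to collapse $\sqrt{-L}X_i - X_i^*\sqrt{-L}$ to $2X_i\sqrt{-L}$, evaluate $\int_0^\infty yP_y^2\,dy = \tfrac14(-L)^{-1}$ by spectral calculus to obtain $T_i = \tfrac12 X_i(\sqrt{-L})^{-1}$, and then apply \eqref{TA-norm2}. The extra remarks on centrality of the Casimir and the kernel of $L$ are just elaborations of what the paper states more tersely.
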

The constant in the previous example is sharp and was first obtained in  \cite{MR1650585}.

\begin{proposition}
Let $\bH^n$ be the Heisenberg group with left invariant vector fields 
$X_1,\cdots,X_n,Y_1,\cdots, Y_n, Z$. Consider the sublaplacian $L=\sum_{j=1}^n (X_j^2+Y_j^2)$. Then
\[
\N{[W_j,\sqrt{-L}] f}_{p} \le  \sqrt{2}(p^*-1) \N{Zf}_p,
\]
where $W_j=X_j+iY_j$ is the  complex gradient.
\end{proposition}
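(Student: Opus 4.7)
The plan is to identify the commutator $[W_j,\sqrt{-L}]$ as a multiple of one of the Gundy--Varopoulos projections $T_i$ bounded by Theorem~\ref{thm:MT}, and then invoke a (vector-valued) Burkholder estimate. The starting point is the Heisenberg commutation relations: from $[X_a,Y_b]=\delta_{ab}Z$, $[X_a,X_b]=[Y_a,Y_b]=0$, and the centrality of $Z$, a direct calculation gives $[X_k,L]=2Y_kZ$, $[Y_k,L]=-2X_kZ$, and therefore
\[
[W_j,L]=-2iZ\,W_j,\qquad\text{equivalently,}\qquad W_jL=(L-2iZ)\,W_j.
\]
Since $Z$ is central, joint functional calculus on the commuting pair $(L,Z)$ promotes this to $W_j\varphi(L)=\varphi(L-2iZ)\,W_j$ for any reasonable $\varphi$. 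Applied to $\varphi(\lambda)=\sqrt{-\lambda}$ and to $\varphi(\lambda)=e^{-y\sqrt{-\lambda}}$ it gives $W_j\sqrt{-L}=\sqrt{-L+2iZ}\,W_j$ and $P_yW_j=W_j\,e^{-y\sqrt{-L-2iZ}}$. Combining these with the algebraic identity $\sqrt a-\sqrt b=(a-b)(\sqrt a+\sqrt b)^{-1}$ yields
\[
[W_j,\sqrt{-L}]\;=\;2iZ\bigl(\sqrt{-L+2iZ}+\sqrt{-L}\bigr)^{-1}W_j.
\]

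The next step is to compute the complex combination $T_{W_j}:=T_{X_j}+iT_{Y_j}$ of Gundy--Varopoulos projections. Each $X_k$ is skew-adjoint on $\bH^n$, so Theorem~\ref{thm:MT} gives $T_{X_k}f=\int_0^\infty yP_y\{X_k,\sqrt{-L}\}P_yf\,dy$, and summation produces $T_{W_j}f=\int_0^\infty yP_y\{W_j,\sqrt{-L}\}P_yf\,dy$. The spectral shift rewrites the anticommutator as $\{W_j,\sqrt{-L}\}=(\sqrt{-L+2iZ}+\sqrt{-L})W_j=W_j(\sqrt{-L}+\sqrt{-L-2iZ})$. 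Pulling the outer factor past $P_y$ (a function of $L,Z$) and commuting $W_j$ through the inner $P_y$, the double integral telescopes via $\int_0^\infty y\,e^{-y(\sqrt{-L}+\sqrt{-L-2iZ})}\,dy=(\sqrt{-L}+\sqrt{-L-2iZ})^{-2}$ into
\[
T_{W_j}\;=\;W_j\bigl(\sqrt{-L}+\sqrt{-L-2iZ}\bigr)^{-1}.
\]
Combined with the previous display and the centrality of $Z$, this produces the key identity $[W_j,\sqrt{-L}]f=2i\,T_{W_j}(Zf)$.

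It then suffices to bound $T_{W_j}$ on $L^p$. Applying the bundle-valued companion Theorem~\ref{thm:MT2} to the complex horizontal gradient $W_j$ of pointwise length $\sqrt{|X_j|^2+|Y_j|^2}=\sqrt2$ --- or equivalently, applying Theorem~\ref{thm:MT} separately to $X_j$ and $Y_j$ and combining the two scalar martingale transforms into a complex-valued one whose predictable multiplier $(1,i)$ has norm $\sqrt2$ --- the sharp (vector-valued) Burkholder inequality gives $\|T_{W_j}g\|_p\le\tfrac{\sqrt2}{2}(p^*-1)\|g\|_p$. Setting $g=Zf$ and multiplying by the factor $2$ from the identity above yields the claimed bound $\|[W_j,\sqrt{-L}]f\|_p\le\sqrt2\,(p^*-1)\,\|Zf\|_p$.

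The main obstacle is the algebraic identification of $T_{W_j}$ in the second paragraph: one must propagate $W_j$ through a composition of Poisson semigroups via the spectral shift $\varphi(L)\mapsto\varphi(L-2iZ)$ and evaluate the $y$-integral against the non-self-adjoint operator $\sqrt{-L-2iZ}$. This is rigorously justified through the Fourier decomposition along the centre: on each $\lambda$-fiber, $Z$ becomes multiplication by $i\lambda$ and $-L$ becomes a $\lambda$-dependent harmonic oscillator with positive discrete spectrum, so that the $\pm 2iZ$ shifts become real spectral translations and every operation falls within classical functional calculus.
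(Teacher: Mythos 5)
Your argument is essentially the paper's: the Gundy--Varopoulos representation plus Theorem~\ref{thm:MT2} applied to a matrix of norm~$\sqrt{2}$, after first establishing the identity $[W_j,\sqrt{-L}]f = 2i\,T_{W_j}(Zf)$ with $T_{W_j}=T_{X_j}+iT_{Y_j}=\mathcal T_j$. The $\sqrt{2}$ constant, the extra factor of $2$ from the identity, and the final bookkeeping all match.

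The one place where you and the paper genuinely diverge is in how the key identity $[W_j,\sqrt{-L}]=2i\,T_{W_j}Z$ is obtained. You carry out a formal joint functional calculus for the commuting pair $(L,Z)$: propagating the intertwining $W_j\varphi(L)=\varphi(L-2iZ)W_j$, writing $[W_j,\sqrt{-L}]=2iZ\bigl(\sqrt{-L+2iZ}+\sqrt{-L}\bigr)^{-1}W_j$, and collapsing the $y$-integral against $e^{-y(\sqrt{-L}+\sqrt{-L-2iZ})}$ to get $T_{W_j}=W_j\bigl(\sqrt{-L}+\sqrt{-L-2iZ}\bigr)^{-1}$. This is an attractive and compact way to see the identity, and the algebra checks out. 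But you are applying the square-root and Poisson-subordination calculus to the non-self-adjoint, non-normal operators $L\pm 2iZ$; the manipulation is not justified by self-adjoint spectral theory alone, and you correctly flag this, deferring to the Fourier/eigenfunction decomposition along the centre. What the paper does in Lemma~\ref{lem:pt} and Proposition~\ref{prop:equi} is exactly that deferral made rigorous: it verifies the identity eigenfunction-by-eigenfunction on the Laguerre basis $\widetilde\varphi_k^\lambda$, establishes uniform convergence of the spectral series (Lemmas~\ref{lem:Schwartz} and~\ref{lem:pt}), and then transfers the identity to general Schwartz $f$ by testing against the heat kernel. So your argument is not a different proof, but an outline of the paper's proof with the analytical content of Lemmas~\ref{lem:Schwartz} and~\ref{lem:pt} compressed into a remark.

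One cosmetic caveat: your phrase ``applying Theorem~\ref{thm:MT} separately to $X_j$ and $Y_j$ and combining the two scalar martingale transforms'' is not literally what gets the constant $\sqrt{2}$; a naive triangle inequality from the two scalar bounds gives $p^*-1$, not $\tfrac{\sqrt{2}}{2}(p^*-1)$. What you need (and what the paper does) is to feed the \emph{single} complex matrix $\mathcal A_j$ (with the $(1,i)$-multiplier you describe) into Theorem~\ref{thm:MT2} and use $\|\mathcal A_j\|=\sqrt{2}$; your first formulation of this step is the correct one.
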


\begin{proposition}
Let  $X,Y, Z$ be the left invariant vector fields on $\SU(2)$ and consider the sublaplacian $L=X^2+Y^2$. Then
\[
\N{[W,\sqrt{-L}] f}_{p} \le  2\sqrt{2}(p^*-1)  \N{(iZ+1)f}_p,
\]
where $W=X+iY$ is the  complex gradient.
\end{proposition}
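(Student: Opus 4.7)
I will apply Theorem~\ref{thm:MT} with $V\equiv 0$ to the sublaplacian $L=X^2+Y^2$ on $\SU(2)$ and the vector fields $X_1=X$, $X_2=Y$, then combine $T_1,T_2$ into the complex operator $T=T_1+iT_2$ and exhibit a factorization of the form $[W,\sqrt{-L}]=c_0\,T\circ(iZ+1)$ for an explicit constant $c_0$ with $|c_0|=2$. The key structural input is that on $\SU(2)$ the left-invariant field $Z$ commutes with the sublaplacian $L$, so $L$ and $Z$ admit a joint spectral decomposition via Peter--Weyl: on each irreducible representation $V_j$, indexed by weights $m\in\{-j,\dots,j\}$, all relevant operators act by explicit scalars.

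A direct computation with the $\su(2)$-brackets gives $[W,L]=\alpha(1-iZ)W$ for an explicit normalization constant $\alpha$, together with a shift rule of the form $(iZ-1)W=W(iZ+1)$. Combined with $[L,Z]=0$, functional calculus yields
\[
W\sqrt{-L}=\sqrt{-L-\alpha(1-iZ)}\,W
\]
on a dense domain (non-negativity of the operator under the square root on the range of $W$ is checked directly on each $V_j$: if $v$ is a joint eigenvector of $(-L,iZ)$ with eigenvalues $(\lambda,m)$, then $Wv$ is a joint eigenvector with $-L$-eigenvalue $\lambda'$ such that $\lambda'+\alpha(1-(m+1))=\lambda$, hence $-L-\alpha(1-iZ)$ acts by $\lambda\ge 0$ on $Wv$). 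The algebraic identity $\sqrt{A}-\sqrt{B}=(A-B)(\sqrt{A}+\sqrt{B})^{-1}$ then gives
\[
[W,\sqrt{-L}]f=-\alpha(1-iZ)\bigl(\sqrt{-L-\alpha(1-iZ)}+\sqrt{-L}\bigr)^{-1}Wf.
\]
A parallel spectral computation evaluates the Gundy--Varopoulos operator: the integrand $P_y\{\sqrt{-L},W\}P_yf$ reduces to $e^{-y(\sqrt{-L}+\sqrt{-L-\alpha(1-iZ)})}\bigl(\sqrt{-L}+\sqrt{-L-\alpha(1-iZ)}\bigr)Wf$ on joint eigenspaces (all relevant multipliers commute with $P_y=e^{-y\sqrt{-L}}$ because $[L,Z]=0$), and the time integral $\int_0^\infty y\,e^{-ys}\,s\,dy=s^{-1}$ gives $Tf=(\sqrt{-L-\alpha(1-iZ)}+\sqrt{-L})^{-1}Wf$. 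Comparing with the previous display, $[W,\sqrt{-L}]f=-\alpha(1-iZ)Tf$, and the shift rule rewrites this as $c_0\,T\bigl((iZ+1)f\bigr)$.

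For the norm bound, Theorem~\ref{thm:MT} with $V=0$ gives $\|T_if\|_p\le\tfrac12\cot(\pi/(2p^*))\|f\|_p\le(p^*-1)\|f\|_p$; the $\ell^2$-orthogonality inherent in combining the two Gundy--Varopoulos martingale transforms improves this to $\|Tg\|_p\le\sqrt{2}(p^*-1)\|g\|_p$ for the complex combination. Taking $g=(iZ+1)f$ and multiplying by $|c_0|=2$ yields the claimed $\|[W,\sqrt{-L}]f\|_p\le 2\sqrt{2}(p^*-1)\|(iZ+1)f\|_p$.

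\textbf{Main obstacle.} The delicate point is the constant-tracking in the comparison step: one must carefully identify how the scalar spectral multiplier $-\alpha(1-iZ)$ acting on $Tf$ gets converted, via the non-commutativity $W\cdot iZ\ne iZ\cdot W$, into $(iZ+1)$ acting on $f$; and one must handle the zero-modes of $iZ+1$ (the $m=-1$ eigenspaces in $V_j$ for $j\ge 1$) by verifying that $[W,\sqrt{-L}]f$ vanishes there as well, consistent with the inequality. Once the shift rule is set up correctly on each $V_j$, the claimed identity and hence the estimate follow.
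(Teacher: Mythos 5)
Your overall strategy mirrors the paper's: use the $\su(2)$ commutation relations to commute $W$ through $\sqrt{-L}$ (via the identity $W\sqrt{-L}=\sqrt{-L-4(1-iZ)}\,W$, which the paper encodes as the eigenfunction computation \eqref{eq:TPhi}), write the commutator as $\Tcal$ applied to a multiple of $(iZ+1)f$ via the shift rule, and bound $\Tcal$ through its Gundy--Varopoulos martingale representation. That is the same decomposition and the same key lemma (Proposition \ref{prop:equiSU2} plus Theorem \ref{thm:MT2}); your functional-calculus phrasing $W\sqrt{-L}=\sqrt{-L-\alpha(1-iZ)}W$ is a cleaner packaging of the paper's eigenfunction computation, but it is not a different route.

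However, your constant-tracking contains two errors that happen to cancel. First, with the paper's normalization of the Pauli matrices ($[X,Y]=2Z$ etc.), the computation gives $[W,L]=4(1-iZ)W$, i.e.\ $\alpha=4$, and hence the factorization is $[W,\sqrt{-L}]=4\,T\bigl((iZ+1)f\bigr)$, not $c_0=2$. (This is exactly the paper's $[W,\sqrt{-L}]f=\Tcal(4iZ+4)f$.) Second, your step \textquotedblleft$\ell^2$-orthogonality improves the scalar bound $\|T_if\|_p\le(p^*-1)\|f\|_p$ to $\|Tg\|_p\le\sqrt2(p^*-1)\|g\|_p$\textquotedblright\ is not a valid $L^p$ argument for $p\neq2$ and in any case gives the wrong number. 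The correct step is to regard $T=T_1+iT_2$ as a single Gundy--Varopoulos projection $T_{\Acal}$, where $\Acal$ is the $3\times3$ matrix with $a_{13}=1$, $a_{23}=i$, $a_{31}=-1$, $a_{32}=-i$, so $\|\Acal\|=\sqrt2$, and apply Theorem \ref{thm:MT2} with $V\equiv0$ to get
\[
\|Tg\|_p\le\|\Acal\|\,\frac{p^*-1}{2}\,\|g\|_p=\frac{\sqrt2}{2}(p^*-1)\|g\|_p,
\]
which is half of what you claim. Combining $c_0=4$ with the correct bound $\frac{\sqrt2}{2}(p^*-1)$ recovers $2\sqrt2(p^*-1)$; your $c_0=2$ together with your inflated $\sqrt2(p^*-1)$ gives the same final number only by coincidence. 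So while the global structure of your argument is sound and matches the paper, the two intermediate quantities are off by factors of $2$ that compensate each other, and the \textquotedblleft orthogonality\textquotedblright\ hand-wave needs to be replaced by a genuine appeal to Theorem \ref{thm:MT2} with the explicit matrix norm.
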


Theorem 1.1 can also be generalized to the framework of vector bundles under general assumptions, which allows us to obtain dimension-free estimates for Riesz transforms on very general vector bundles and yields a generalization and simplification of the results in \cite{LiXD08,LiXD10I,LiXD10II,LiXD14I,LiXD14II}.
Let $\mathcal{E}$ be a finite-dimensional vector
bundle over $\mathbb{M}$. We denote by $\Gamma ( \mathbb{M},
\mathcal{E} )$ the space of smooth  sections of this bundle. Let now $\nabla$
denote a metric connection on $\mathcal{E}$. We consider an operator on $\Gamma ( \mathbb{M},
\mathcal{E} )$  that can be written as
\[
\mathcal{L}=\mathcal{F}+\nabla_{0}+ \sum_{i=1}^d \nabla_{i}^2,
\]
where
\[
\nabla_i=\nabla_{X_i}, \quad 0 \le i \le d,
\]
and the $X_i$'s  are smooth vector fields on $\mathbb{M}$ and 
$\mathcal{F}$  is a smooth symmetric and non positive potential (that is a smooth section of the
bundle $\mathbf{End}(\mathcal{E})$). We will assume that $\mathcal{L}$ is non-positive and essentially self-adjoint on the space $\Gamma_0(\M,\mathcal{E})$ of smooth and compactly supported sections. We consider then a first order differential operator $d_a$ on $\Gamma ( \mathbb{M},
\mathcal{E} )$ that can be written as
\[
d_a=\sum_{i=1}^d a_i \nabla_{X_i},
\]
where $a_1,\cdots,a_d$ are smooth sections of the bundle $\mathbf{End}(\mathcal{E})$. Our main assumptions  are that 
\[
d_a \mathcal{L} \eta =\mathcal{L} d_a \eta,\quad \eta \in \Gamma ( \mathbb{M},
\mathcal{E} ),
\]
and that 
\[
\| d_a \eta \|^2 \le C \sum_{i=1}^d \| \nabla_{X_i} \eta \|^2, \quad \eta \in \Gamma ( \mathbb{M},
\mathcal{E} ),
\]
for some constant $C\ge 0$.  Then we have  the following:

\begin{theorem} 
For $1<p<\infty$,
\[
\| d_a (-\mathcal{L})^{-1/2} \eta \|_p \le 6C(p^*-1)\| \eta \|_p.
\]
\end{theorem}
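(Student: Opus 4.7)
The strategy is to lift the Gundy--Varopoulos martingale transform construction that underlies Theorem \ref{thm:MT} from scalar functions to sections of $\mathcal{E}$, and then recognize its boundary projection, up to an explicit constant, as $d_a(-\mathcal{L})^{-1/2}\eta$. The first step is purely algebraic. The commutation hypothesis $d_a\mathcal{L}=\mathcal{L}d_a$ on $\Gamma(\M,\mathcal{E})$ implies, by functional calculus, that $d_a\sqrt{-\mathcal{L}}=\sqrt{-\mathcal{L}}d_a$ and $d_a P_y = P_y d_a$, where $P_y = e^{-y\sqrt{-\mathcal{L}}}$ is the Poisson semigroup for $\mathcal{L}$ acting on sections. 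Using in addition the elementary one-variable identity $\int_0^\infty y\sqrt{-\mathcal{L}}P_{2y}\,dy=\tfrac14(-\mathcal{L})^{-1/2}$ (an integration by parts against $\sqrt{-\mathcal{L}}P_{2y}=-\tfrac12\partial_y P_{2y}$), a short rearrangement gives
\[
d_a(-\mathcal{L})^{-1/2}\eta \;=\; 4\int_0^{+\infty} y\, P_y\bigl(\sqrt{-\mathcal{L}}\,d_a\bigr)P_y\eta\,dy,
\]
whose right-hand side is exactly a ``projection of a martingale transform'' object of the Gundy--Varopoulos type handled in the proof of Theorem \ref{thm:MT}, but with $d_a$ in place of a single $X_i$ and lifted to sections.

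Next I would construct the bundle-valued martingale realizing this integral. Starting from the $\M$-diffusion with scalar generator $X_0+\sum_i X_i^2$, lift it to $\mathcal{E}$ by parallel transport along the connection $\nabla$ and absorb the symmetric non-positive potential $\mathcal{F}$ via a Feynman--Kac multiplicative factor, so that conditional expectation of fiber-valued functionals of this lifted process reproduces $e^{t\mathcal{L}}$ acting on sections. Pairing this process with an independent one-dimensional Brownian motion on $(0,\infty)$ starting at height $y$ and killed upon hitting $0$ yields a space-time background process whose harmonic extension of $\eta$ is a fiber-valued martingale $M^\eta$ with $\|M^\eta\|_p$ controlling $\|\eta\|_p$ after averaging in $y$. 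Transforming $M^\eta$ by an appropriate predictable $\mathbf{End}(\mathcal{E})$-valued process built from $d_a$---whose pointwise operator norm is controlled via the hypothesis $\|d_a\eta\|^2\le C\sum_i\|\nabla_{X_i}\eta\|^2$---and projecting back to the boundary via conditional expectation yields, up to the factor $4$, the integral displayed above. The sharp Burkholder--Wang $L^p$ inequality for differentially subordinate martingales in the presence of an adapted non-positive potential (here $\mathcal{F}$ plays the role of $V$ in Theorem \ref{thm:MT}) then produces the constant $\tfrac{3}{2}(p^*-1)$, and combining the three factors gives
\[
\|d_a(-\mathcal{L})^{-1/2}\eta\|_p \;\le\; 4\cdot\tfrac{3}{2}(p^*-1)\cdot C\,\|\eta\|_p \;=\; 6C(p^*-1)\|\eta\|_p.
\]

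The main technical obstacle, in my view, is not the $L^p$ inequality itself but the bundle-valued stochastic construction behind it: making precise the lift of the $\M$-diffusion to $\mathcal{E}$ (parallel transport together with the $\mathcal{F}$-Feynman--Kac factor), verifying that the resulting object genuinely realizes $P_y$ on sections as the conditional expectation of a fiber-valued martingale, extending the formal spectral commutations $d_a P_y = P_y d_a$ to the operator level under mere essential self-adjointness of $\mathcal{L}$, and tracking the constant $C$ cleanly from the pointwise fiberwise hypothesis into the predictable amplitude of the martingale transform.
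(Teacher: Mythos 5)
Your proposal is correct and follows essentially the same route as the paper: lift the diffusion to $\mathcal{E}$ by stochastic parallel transport, absorb $\mathcal{F}$ through a Feynman--Kac multiplicative functional $\mathcal{M}_t$, form the space--time martingale $M^\eta_s=\mathcal{M}_{s\wedge\tau}\theta_{s\wedge\tau}Q\eta(Y_{s\wedge\tau},B_{s\wedge\tau})$, identify $d_a(-\mathcal{L})^{-1/2}\eta$ as (up to a factor) the conditional expectation of a martingale transform of $M^\eta$, and finish with the Burkholder--Wang/Ba\~nuelos--Os\c{e}kowski inequalities together with the subordination hypothesis $\|d_a\eta\|^2\le C\sum_i\|\nabla_{X_i}\eta\|^2$. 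The only cosmetic difference is that you first make the spectral identity $d_a(-\mathcal{L})^{-1/2}\eta=4\int_0^\infty y\,P_y\sqrt{-\mathcal{L}}\,d_aP_y\eta\,dy$ explicit, which the paper instead obtains as the final step in the proof of its Lemma \ref{lem:CE2}; the constant-tracking ($4\times\tfrac32(p^*-1)\times C$) matches the paper's $6C(p^*-1)$.
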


This result can be applied on the following examples:
\begin{enumerate}
\item On Riemannian manifolds with non-negative Weitzenb\"ock curvature. We recover then  the boundedness Riesz transform on forms associated with Hodge-de Rham Laplacian as obtained by X.D. Li \cite{LiXD08}.
\item On Riemannian manifolds which have non-negative scalar curvature and admit a spin structure. We prove then the boundedness of the Riesz transforms on spinors associated with the Dirac operators, which is a new result.
\end{enumerate}

\

\noindent\textbf{Convention:} \textit{Throughout the paper,  the diffusion operators we consider ($L$ and $\mathcal{L}$) will be assumed to have no spectral gap. In the case, where those operators do have a spectral gap, all the results hold when restricting $L^p(\M,\mu)$ to the space $L_0^p(\M,\mu)=\{ f \in L^p(\M,\mu), \int f d\mu =0\}$.}

\section{Scalar operators constructed from martingale transforms}

\subsection{A general theorem}
Let $\M$ be a smooth and complete Riemannian manifold with a smooth measure $\mu$. Let $X_1,\cdots,X_d$ be locally Lipschitz vector fields defined on $\M$. We consider the Schr\"odinger operator 
\[
L=-\sum_{i=1}^d X_i^* X_i+V,
\]
where the $X_i^*$ denotes the formal adjoint of $X_i$ with respect to $\mu$ and where $V:\M\to \R$ is a non-positive smooth potential. Assume that $L$ is essentially self-adjoint with respect to $\mu$ on the space $\mathcal{S}(\M)$ of smooth and rapidly decreasing functions. We denote by $(H_t)_{t\ge 0}$ the heat semigroup with generator $L$. We can write   
\[
L=\sum_{i=1}^d X_i^2+X_0+V,
\]
for some locally Lipchitz vector field $X_0$.

Let $(Y_t)_{t\ge 0}$ be the diffusion process on $\M$ with generator  $\sum_{i=1}^d X_i^2+X_0$ starting from the distribution $\mu$. We assume that $(Y_t)_{t\ge 0}$ is non explosive,  it can then be constructed  via the Stratonovitch stochastic differential equation
\[
dY_t=X_0(t)dt+\sum_{i=1}^d X_i(Y_t)\circ d\beta_t^i, 
\]
where $\beta_t=(\beta_t^1,\cdots,\beta_t^d)$ is the Brownian motion on $\R^d$ with generator $\sum_{i=1}^d \frac{\partial^2}{\partial x_i^2}$.

Let $(B_t)_{t\ge 0}$ be a one-dimensional Brownian motion on $\R$, with generator $\frac{d^2}{dy^2}$ starting from $y_0>0$, which is independent from $(Y_t)_{t\ge 0}$. Then $\bE(B_t^2)=2t$. Set the stopping time
$$
\tau=\inf\{t>0, \,B_t=0\}.
$$

We denote by $H_t=e^{tL}$ the heat semigroup and by $P_y=e^{-y \sqrt{-L}}$ the Poisson semigroup. 
By the Feynman-Kac formula, the heat semigroup $H_t$ acting on $f\in \mathcal{S}(\M)$ can be written as 
\[
H_tf(x)=\bE_x\brak{e^{\int_0^t V(Y_s)ds} f(Y_t)}.
\]
To work on a set of test functions which is large enough, we assume that $\mathcal{S}(\M)$ is stable by $H_t, P_y,  X_i,X_i^*$ and $\sqrt{-L}$. Let $1\le i\le d$. Consider the operator
\[
T_i=\int_0^{+\infty} y P_y \brak{\sqrt{-L} X_i-X_i^* \sqrt{-L}} P_y dy.
\]
We assume that $T_i$ is pointwisely well defined on $\mathcal{S}(\M)$ and $T_i(\mathcal{S}(\M)) \subset \mathcal{S}(\M)$. We have then the following Gundy-Varopoulos type probabilistic representation of $T_i$.

\begin{lemma}\label{lem:CE}
Let $f\in \mathcal{S}(\M)$. For almost all $x\in \M$, we have, for $i=1, 2, \dots, d$,  
\[
T_i f(x)=- \frac{1}{2} \lim_{y_0\to \infty} \bE_{y_0} \brak{ e^{\int_0^{\tau} V(Y_v)dv }\int_0^{\tau}  e^{-\int_0^{s} V(Y_v)dv}A_i(\nabla,\partial_y)^{\mathrm T} Q^V f(Y_s, B_s) (d\beta_s,dB_s)\mid Y_{\tau}=x},
\]
where  $\nabla=(X_1,\cdots, X_d)$, and $A_i$ is a $(d+1)\times (d+1)$ matrix with $a_{i(d+1)}=-1$, $a_{(d+1)i}=1$ and otherwise $0$.
\end{lemma}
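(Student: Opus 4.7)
The plan is to verify the identity weakly by testing it against arbitrary $g \in \mathcal{S}(\M)$ and then invoking density. Set $u = Q^V f$ and $v = Q^V g$ where $Q^V h(x,y):=P_y h(x)$ is the Poisson extension, so that both satisfy $(L+\partial_y^2)u = 0$ on $\M \times (0,\infty)$ with boundary values $f$, $g$. Write $\nabla_H = (X_1, \ldots, X_d, \partial_y)^T$, $W_t = (\beta_t, B_t)$, and $Z_t = e^{\int_0^t V(Y_s)ds}$. Since the generator of $(Y,B)$ equals $\sum X_i^2 + X_0 + \partial_y^2 = L - V + \partial_y^2$, Ito's formula together with $dZ_t = V(Y_t) Z_t dt$ and the Poisson equation gives
\[
d(Z_t u(Y_t, B_t)) = Z_t\, \nabla_H u \cdot dW_t,
\]
so $Z_t u(Y_t, B_t)$ is a local martingale. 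Introduce the candidate transform $I_t^f := \int_0^t Z_s^{-1} A_i \nabla_H Q^V f(Y_s, B_s) \cdot dW_s$.

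Applying Ito's product rule to $Z_t v(Y_t, B_t) \cdot I_t^f$, the Feynman-Kac factors $Z$ and $Z^{-1}$ cancel inside the quadratic covariation, and the bracket term contributes $2\,\nabla_H v \cdot A_i \nabla_H u\, dt$, the factor $2$ reflecting $d[\beta^i]_t = d[B]_t = 2\,dt$. With $Y_0 \sim \mu$, $B_0 = y_0$, $I_0^f = 0$, and $Z_\tau v(Y_\tau, 0) = Z_\tau g(Y_\tau)$, this yields
\[
\bE_{y_0}\edg{Z_\tau g(Y_\tau)\, I_\tau^f} = 2\,\bE_{y_0}\edg{\int_0^\tau \nabla_H Q^V g \cdot A_i \nabla_H Q^V f\,(Y_s, B_s)\, ds}.
\]
Using independence of $Y$ and $B$, the $\mu$-invariance of $(Y_t)$ (which holds since $\sum X_i^2 + X_0 = -\sum X_i^* X_i$ is $\mu$-self-adjoint), and Fubini, the right-hand side collapses to $2\int_0^\infty G_{y_0}(b) \int_\M \nabla_H Q^V g \cdot A_i \nabla_H Q^V f\,(x,b)\, d\mu(x)\, db$, where $G_{y_0}(b) = \int_0^\infty p_s^{(0,\infty)}(y_0, b)\,ds = \min(y_0, b)$ is the Green function of $B$ killed at $0$. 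As $y_0 \to \infty$, $G_{y_0}(b) \nearrow b$. Expanding via the sparsity pattern of $A_i$ gives $\nabla_H v \cdot A_i \nabla_H u = X_i P_b g \cdot \sqrt{-L}\, P_b f - \sqrt{-L}\, P_b g \cdot X_i P_b f$, and integrating against $d\mu(x)$ (adjoints of $X_i$, self-adjointness of $\sqrt{-L}$) and then against $b\,db$ (self-adjointness of $P_b$) matches the definition of $T_i$ and produces $-2\int_\M g\, T_i f\, d\mu$.

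Finally, independence of $Y$ and $B$ together with $Y_0 \sim \mu$ and $\mu$-invariance forces $Y_\tau \sim \mu$ for every $y_0$, so conditioning gives $\bE_{y_0}\edg{Z_\tau g(Y_\tau)\, I_\tau^f} = \int_\M g(x)\,\bE_{y_0}\edg{Z_\tau I_\tau^f \mid Y_\tau = x}\,d\mu(x)$, and letting $g$ range over $\mathcal{S}(\M)$ yields the claimed pointwise identity for $\mu$-a.e.\ $x$. The principal obstacle is technical: one must ensure $I^f$ is a true (not merely local) martingale on $[0,\tau]$ so the product-rule identity survives taking expectation, and justify dominated convergence as $y_0 \to \infty$. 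Both hinge on a priori bounds for $\nabla_H Q^V f$ and $\nabla_H Q^V g$ along the Poisson semigroup, combined with $Z_t \leq 1$ (since $V \leq 0$) and the monotone majorant $G_{y_0}(b) \leq b$.
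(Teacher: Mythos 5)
Your proposal is correct and follows essentially the same route as the paper: test the identity against $g\in\mathcal{S}(\M)$, apply It\^o's formula to $M_t^g = Z_t Q^V g(Y_t,B_t)$ and the It\^o isometry (your product rule computation is the same thing written slightly differently), use the cancellation of the Feynman--Kac factors inside the bracket, pass to the Green function $y_0\wedge y$, let $y_0\to\infty$, and finish via self-adjointness of $L$ and disintegration over $Y_\tau\sim\mu$.
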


To prove the lemma, we introduce some notation.
For $f\in \mathcal{S}(\M)$, we denote 
\[
Q^Vf(x,y)=P_y f(x)=e^{-y\sqrt{-L}} f(x),
\]
and 
\[
M_t^f= e^{\int_0^{t\wedge \tau} V(Y_s)ds} Q^Vf(Y_{t\wedge \tau},B_{t\wedge \tau}).
\]

\begin{lemma}\label{martingale:lk}
Let $f\in \mathcal{S}(\M)$. The process $M_t^f$ is a square integrable martingale. Moreover, the quadratic variation is
\begin{eqnarray*}
\ang{M^f}_t&=&2 \int_0^{t\wedge \tau}\brak{ e^{\int_0^{\tau} V(Y_v)dv} \partial_y Q^Vf(Y_s,B_s)}^2ds\\
& &+2\int_0^{t\wedge \tau} \sum_{i=1}^d \brak{e^{\int_0^{\tau} V(Y_v)dv}X_i Q^Vf(Y_s,B_s)}^2 ds.  
\end{eqnarray*}
\end{lemma}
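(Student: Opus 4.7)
The plan is to apply It\^o's formula to the Poisson extension $F(x,y) := Q^V f(x,y) = P_y f(x) = e^{-y\sqrt{-L}}f(x)$ along the independent product process $(Y_t,B_t)$, and then absorb the multiplicative functional $A_t := e^{\int_0^{t\wedge\tau} V(Y_s)\,ds}$ via a product rule. The key preliminary observation is that the Poisson extension satisfies $\partial_y^2 F + L F = 0$, which when combined with $L = \sum_i X_i^2 + X_0 + V$ becomes
$$\partial_y^2 F + \sum_{i=1}^d X_i^2 F + X_0 F = -VF.$$

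Since $(Y_t)$ has generator $\sum_i X_i^2 + X_0$ with martingale part $\sum_i X_i f(Y_t)\,d\beta^i_t$, and $(B_t)$ is an independent Brownian motion with generator $\partial_y^2$, both with per-coordinate quadratic variation $2t$ under the paper's convention ($\bE B_t^2 = 2t$), It\^o's formula applied to $F(Y_t,B_t)$ for $t<\tau$ gives
$$dF(Y_t,B_t) = -V(Y_t)F(Y_t,B_t)\,dt + (\partial_y F)(Y_t,B_t)\,dB_t + \sum_{i=1}^d (X_i F)(Y_t,B_t)\,d\beta^i_t.$$
Applying the product rule to $M_t^f = A_t F(Y_{t\wedge\tau},B_{t\wedge\tau})$ and using $dA_t = A_t V(Y_t)\,dt$ for $t<\tau$ (with $A$ frozen after $\tau$), the two drift contributions cancel exactly, leaving the purely martingale differential
$$dM_t^f = A_t (\partial_y F)(Y_t,B_t)\,dB_t + A_t \sum_{i=1}^d (X_i F)(Y_t,B_t)\,d\beta^i_t, \qquad t<\tau.$$
Hence $M^f$ is a local martingale, and its quadratic variation is obtained from independence of $B$ and the components of $\beta$ and from $\langle B\rangle_t = \langle \beta^i\rangle_t = 2t$, producing the announced formula after stopping at $\tau$.

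The main obstacle is upgrading $M^f$ from a local martingale to a genuine square-integrable martingale. Two inputs handle this: first, $V \le 0$ forces $A_t \in (0,1]$ uniformly, so the exponential factor is harmless; second, the assumed stability of $\mathcal{S}(\M)$ under $P_y$, $X_i$, and $\sqrt{-L}$ implies that $\partial_y F = -\sqrt{-L}P_y f$ and $X_i F = X_i P_y f$ remain rapidly decreasing, uniformly as $y$ ranges over any bounded set. A standard reducing sequence of stopping times combined with dominated convergence under the joint law of $(Y,B)$ started from $(\mu,y_0)$ then yields $\bE\langle M^f\rangle_\infty < \infty$ and promotes $M^f$ to a true square-integrable martingale.
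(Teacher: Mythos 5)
Your It\^o computation is correct and in fact reproduces the paper's identity \eqref{eq:ito}: the drift from the generator of $(Y_t,B_t)$ applied to $Q^Vf$ produces the $-VQ^Vf$ term via $\bigl(\partial_y^2+\sum_i X_i^2+X_0\bigr)Q^Vf=-VQ^Vf$, and the product rule with $A_t=e^{\int_0^{t\wedge\tau}V(Y_s)\,ds}$ cancels it exactly. Where you and the paper part ways is in how the local martingale is upgraded to a genuine square-integrable martingale. The paper avoids the upgrade altogether: it first shows, via the Feynman--Kac formula and the subordination identity $\lambda_u(ds)=\frac{u}{2\sqrt\pi}e^{-u^2/4s}s^{-3/2}\,ds$, that
\[
\bE\bigl(e^{\int_0^\tau V(Y_v)\,dv}f(Y_\tau)\mid Y_0=x,\ B_0=u\bigr)=Q^Vf(x,u),
\]
and then uses the Markov property to conclude directly that $M_s^f=\bE[M_\tau^f\mid\F_s]$. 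Since $|M_\tau^f|\le|f(Y_\tau)|$ (from $V\le0$) and $Y_\tau\sim\mu$, the terminal value is in $L^2$, so $M^f$ is a \emph{closed} square-integrable martingale with no localization needed. Your route requires verifying $\bE\langle M^f\rangle_\infty<\infty$, and there your argument has a real gap: you invoke rapid decrease of $\partial_y Q^Vf$ and $X_iQ^Vf$ ``uniformly as $y$ ranges over any bounded set,'' but $B_t$ on $[0,\tau)$ is \emph{not} bounded --- its excursions before hitting $0$ reach arbitrarily high levels. A bound that is uniform only over bounded $y$-intervals furnishes no integrable dominating function. To close this you would need the Green's-function computation
\[
\bE_{y_0}\!\int_0^\tau g(Y_s,B_s)\,ds=2\int_\M\!\int_0^\infty (y_0\wedge y)\,g(x,y)\,dy\,d\mu(x)
\]
together with the spectral estimates
\[
\int_0^\infty\!\int_\M\bigl(|\partial_y P_yf|^2+\sum_i|X_iP_yf|^2\bigr)\,d\mu\,dy=\tfrac12\|(-L)^{1/4}f\|_2^2+\tfrac12\|(-L+V)^{1/4}f\|_2^2<\infty,
\]
which handle the unbounded $y$-range by decay of the Poisson semigroup, not by rapid decrease in $x$. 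As written, your localization step does not go through; once repaired in this way, your It\^o-first route is a valid alternative to the paper's closed-martingale argument, at the cost of this extra quantitative input.
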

\begin{proof}
We follow the proof in \cite{Mey76}. Note that 
\[
M_{\tau}^f=e^{\int_0^{\tau} V(Y_v)dv} Q^Vf(Y_{\tau},0) =e^{\int_0^{\tau} V(Y_u)du} f(Y_{\tau}).
\]
Let $\ld_u$ be the distribution of $\tau$ at $B_0=u$. Since the processes $Y_t$ and $B_t$ are independent, then by the Feynman-Kac formula,
\begin{align*}
\bE\brak{e^{\int_0^{\tau} V(Y_v)dv} f(Y_{\tau})\mid Y_0=x, B_0=u}
&=\int_{0}^{\infty} \bE\brak{e^{\int_0^{s} V(Y_v)dv} f(Y_{s})\mid Y_0=x} \ld_u(ds)
\\ &= \int_{0}^{\infty} H_s f(y) \lambda_u(ds)=Q^Vf(x,u).
\end{align*}
The last equality above follows from the subordination formula due to the fact that 
\[
\ld_u(ds)=\frac{u}{2\sqrt \pi} e^{-u^2/4s}s^{-3/2} ds.
\]
Therefore we have
\begin{align*}
\bE\brak{M_{\tau}^f\mid \F_s} &=\bE\brak{e^{\int_0^{\tau} V(Y_v)dv}f(Y_{\tau}) \mid \F_s}
\\ &=
\bE\brak{e^{\int_0^{\tau} V(Y_v)dv}f(Y_{\tau})1_{\tau\le s} \mid\F_s}+\bE\brak{e^{\int_0^{\tau} V(Y_v)dv}f(Y_{\tau})1_{\tau>s} \mid\F_s}
\\ &=
e^{\int_0^{\tau} V(Y_v)dv}f(Y_{\tau})1_{\tau\le s}+\bE\brak{e^{\int_0^{\tau} V(Y_v)dv}f(Y_{\tau})1_{\tau>s} \mid(Y_s,B_s)}
=M_s^f.
\end{align*}
Now applying It\^o's formula, we obtain that for any $0<t\le \tau$,
\begin{align}\label{eq:ito}
e^{\int_0^t V(Y_v)dv} Q^Vf(Y_t,B_t)=& Q^Vf(Y_0,B_0)+\sum_{i=1}^d \int_0^\tau e^{\int_0^s V(Y_v)dv} X_i Q^Vf(Y_s,B_s) d\beta_s^i
\\&+\int_0^\tau e^{\int_0^s V(Y_v)dv} \partial_yQ^Vf(Y_s,B_s) dB_s.\nonumber
\end{align}
Hence
\begin{eqnarray*}
\langle M^f\rangle_t &=&2 \int_0^{t\wedge \tau}\brak{ e^{\int_0^{\tau} V(Y_v)dv} \partial_y Q^Vf(Y_s,B_s)}^2ds\\
& &+2\int_0^{t\wedge \tau} \sum_{i=1}^d \brak{e^{\int_0^{\tau} V(Y_v)dv}X_i Q^Vf(Y_s,B_s)}^2 ds.  
\end{eqnarray*}
\end{proof}

\begin{proof}[Proof of Lemma \ref{lem:CE}]
Let $f\in \mathcal{S}(\M)$ and $g\in \mathcal{S}(\M)$. Note that $Y$ and $B$ are independent and recall that 
\[
M_{\tau}^g=e^{\int_0^{\tau} V(Y_v)dv }g(Y_{\tau}).
\]
By It\^o's formula \eqref{eq:ito} and the It\^o isometry, we have
\begin{align*}
&\int_{\M} g(x)\brak{ e^{\int_0^{\tau} V(Y_v)dv }\int_0^{\tau}  e^{-\int_0^{s} V(Y_v)dv}A_i (\nabla,\partial_y)^{\mathrm T} Q^V f(Y_s, B_s) (d\beta_s,dB_s)\mid Y_{\tau}=x} d\mu(x)
\\&=
\bE_{y_0} \brak{g(Y_{\tau}) e^{\int_0^{\tau} V(Y_v)dv }\int_0^{\tau}  e^{-\int_0^{s} V(Y_v)dv} A_i (\nabla,\partial_y)^{\mathrm T} Q^V f(Y_s, B_s) (d\beta_s,dB_s)}
\\&=
2 \bE_{y_0} \brak{\int_0^{\tau} \partial_y Q^Vg(Y_s,B_s) X_iQ^V f(Y_s, B_s) ds-\int_0^{\tau}  X_iQ^Vg(Y_s,B_s)\partial_y Q^V f(Y_s, B_s) ds}
\\ &=
2\int_{\M} \int_0^{\infty} (y_0 \wedge y) \brak{\partial_y Q^Vg(x,y) X_iQ^V f(x,y)-X_i Q^Vg(x,y) \partial_yQ^V f(x,y)} dy d\mu(x).
\end{align*}
The last equality follows from the facts that the Green function of the Brownian motion is killed at $0$ and $Y_s$ is distributed according to $\mu$.

Since $L$ is self-adjoint, then
\[
\int_{\M} \int_0^{\infty} y\partial_y Q^Vg(x,y) X_iQ^V f(x,y) dy d\mu(x)= -\int_{\M} g(x) \int_0^{\infty} y P_y \sqrt{-L}X_i P_y f(x) dy d\mu(x),
\]
and
\[
\int_{\M} \int_0^{\infty} yX_i Q^Vg(x,y) \partial_y Q^V f(x,y) dy d\mu(x)= -\int_{\M} g(x) \int_0^{\infty} y P_y X_i^* \sqrt{-L}P_y f(x) dy d\mu(x).
\]
Therefore we obtain
\begin{align*}
T_i f(x)=-\frac{1}{2}\lim_{y_0\to \infty} \bE_{y_0} \brak{e^{\int_0^{\tau} V(Y_v)dv }\int_0^{\tau}  e^{-\int_0^{s} V(Y_v)dv} A_i(\nabla,\partial_y)^{\mathrm T}Q^V f(Y_s, B_s) (d\beta_s,dB_s)\,\Big|\, Y_{\tau}=x}.
\end{align*}
\end{proof}

To study the $L^p$ boundedness of $T_i$, we shall use 
some extensions of  Burkholder's celebrated theorem on martingale transforms.  Before stating the results, we introduce some necessary probabilistic background.
Suppose that $(\Omega, \F, \bP)$ is a complete probability space, filtered by $\F=\{\F_t\}_{t\ge 0}$, a family of right continuous sub-$\sigma$-fields of $\F$. Assume that $\F_0$ contains all the events of probability $0$. Let $X$ and $Y$ be adapted, real-valued martingales which have right-continuous paths with left-limits (r.c.l.l.).  The martingale $Y$ is differentially subordinate to $X$ if $|Y_0|\le |X_0|$ and $\ang{X}_t-\ang{Y}_t$ is a nondecreasing and nonnegative function of $t$.  The martingales $X_t$ and $Y_t$ are said to be orthogonal if  the covariation process $\ang{X, Y}_t=0$ for all $t$. Ba\~nuelos and Wang \cite{BW95} proved the following sharp inequality extending the classical results of Burkholder.  We always assume the martingale $X$ (hence $Y$) is $L^p$ bounded for $1<p<\infty$ and by $X$ in he inequalities below we mean $X_{\infty}$.  Similarly for $Y$.  
\begin{theorem}\label{thm:BW95}
Let $X$ and $Y$ be two martingales with continuous paths such that $Y$ is differentially subordinate to $X$. Fix  $1<p<\infty$ and set $p^*=\max\{p,\frac{p}{p-1}\}$.  Then 
\begin{equation}\label{eq:MT}
\|Y\|_p \le (p^*-1)\|X\|_p.
\end{equation}
Furthermore, suppose the martingales $X$ and $Y$ are orthogonal.  Then 
\begin{equation}\label{eq:OMT}
\|Y\|_p \le \cot\!\expr{\frac{\pi}{2 p^*}}\|X\|_p. 
 \end{equation}
Both of these inequalities are sharp.

\end{theorem}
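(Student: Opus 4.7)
The plan is to apply Burkholder's majorant method. For each of the two inequalities, the aim is to construct an auxiliary function $U:\R^2\to\R$ satisfying three properties: (i) a majorization $U(x,y)\ge |y|^p - c_p^p|x|^p$, with $c_p = p^*-1$ for \eqref{eq:MT} and $c_p = \cot(\pi/(2p^*))$ for \eqref{eq:OMT}; (ii) the initial bound $U(x,y)\le 0$ whenever $|y|\le|x|$; and (iii) a second-order differential inequality which, combined with the hypothesis on $(X,Y)$, makes $U(X_t,Y_t)$ a supermartingale. Given these three properties, It\^o's formula yields
\[
\bE[|Y_t|^p] - c_p^p\,\bE[|X_t|^p] \le \bE[U(X_t,Y_t)] \le \bE[U(X_0,Y_0)] \le 0,
\]
and letting $t\to\infty$ delivers the desired inequality.

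For the differentially subordinate case \eqref{eq:MT}, I would take Burkholder's function
\[
U(x,y) = p\bigl(1-\tfrac{1}{p^*}\bigr)^{p-1}\bigl(|y|-(p^*-1)|x|\bigr)\bigl(|x|+|y|\bigr)^{p-1},
\]
and apply It\^o's formula to $U(X_t,Y_t)$. The drift integrand is
\[
U_{xx}(X_t,Y_t)\,d\langle X\rangle_t + 2U_{xy}(X_t,Y_t)\,d\langle X,Y\rangle_t + U_{yy}(X_t,Y_t)\,d\langle Y\rangle_t,
\]
and one must verify, using the differential-subordination constraint $d\langle Y\rangle_t\le d\langle X\rangle_t$ together with the Kunita-Watanabe bound $|d\langle X,Y\rangle_t|^2\le d\langle X\rangle_t\,d\langle Y\rangle_t$, that this integrand is pointwise nonpositive. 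For the orthogonal case \eqref{eq:OMT}, I would instead use an auxiliary function associated with the real part of a suitable holomorphic power, a Burkholder-type function tailored to orthogonality; the identity $d\langle X,Y\rangle_t=0$ eliminates the cross term above, so the remaining second-order inequality can be established with the sharper constant $\cot(\pi/(2p^*))$.

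The main technical obstacle is the verification of property (iii): the explicit Burkholder functions are not globally $C^2$ (they fail to be smooth along the seam $|x|=|y|$, and at the origin), so a direct application of It\^o's formula is not legitimate. One handles this either by approximating $U$ uniformly by smooth functions with controlled second derivatives and passing to the limit, or by invoking the It\^o-Tanaka-Meyer generalization and checking that the local-time contribution along the singular set has the correct sign. Properties (i) and (ii) are then straightforward consequences of the explicit form of $U$ together with the hypothesis $|Y_0|\le|X_0|$ built into the definition of differential subordination; the sharpness of $p^*-1$ and $\cot(\pi/(2p^*))$ is a separate matter, established by testing against explicit two-point and planar-Brownian examples respectively.
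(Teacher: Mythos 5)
This theorem is stated in the paper without proof; it is cited from Ba\~nuelos and Wang \cite{BW95}, and your Burkholder-majorant outline is indeed the method used there. For the differentially subordinate inequality \eqref{eq:MT}, your displayed function is Burkholder's classical $U_p$, and the plan — majorization, initial condition, nonpositive It\^o drift, regularization across the seam $|x|=|y|$ — is essentially the proof in the reference, so that part of the proposal is sound modulo the analytic details you already flag.

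The genuine gap is the orthogonal inequality \eqref{eq:OMT}. You assert that there is "a Burkholder-type function tailored to orthogonality" built from the real part of a holomorphic power, without producing it; but exhibiting that function and verifying the majorization with the exact constant $\cot(\pi/2p^*)$ is precisely the nontrivial content of this half of the theorem. Two points make this a real omission rather than a routine detail. First, the $\cot(\pi/2p^*)$ bound requires orthogonality \emph{and} differential subordination jointly — orthogonality alone does not yield any finite $L^p$ constant for $p\neq 2$ — so it is not enough to say that $d\langle X,Y\rangle=0$ "eliminates the cross term"; one still needs a function $U$ with $U_{xx}\,h^2 + U_{yy}\,k^2\le 0$ whenever $|k|\le |h|$, i.e.\ a directional-concavity condition strictly stronger than superharmonicity, with the right eigenvalue ratio to produce $\cot(\pi/2p^*)$. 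Second, the function that works is the plurisuperharmonic majorant of $|y|^p-\cot^p(\pi/2p^*)|x|^p$ coming from Pichorides's conjugate-function bound (via Ess\'en's explicit harmonic majorant), and checking the required Hessian inequality for it is the delicate step in \cite{BW95}. Without producing that function and that verification, your argument for \eqref{eq:OMT} does not close.
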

More generally, Ba\~nuelos and Os{\c e}kowski \cite{BO15} proved that 
\begin{theorem}\label{thm:BO15}
Let $X$ and $Y$ be two martingales with continuous paths such that $Y$ is differentially subordinate to $X$.
Consider the process 
\[
Z_t=e^{\int_0^{t} V_sds }\int_0^t e^{-\int_0^{s} V_vdv }dY_s,
\]
where $(V_t)_{t\ge 0}$ is a non-positive adapted and continuous process. 
For $1<p<\infty$, we  have the sharp bound
\begin{equation}\label{MT}
\|Z\|_p \le (p^*-1)\|X\|_p.
\end{equation}
\end{theorem}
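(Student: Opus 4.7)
The approach I would take is Burkholder's special-function method, adapted to the semimartingale setting. Let $U:\R^2\to\R$ denote the Burkholder majorant
\[
U(x,y)=p\brak{1-\tfrac{1}{p^*}}^{p-1}\brak{|y|-(p^*-1)|x|}\brak{|x|+|y|}^{p-1},
\]
which satisfies: (i) $U(x,y)\ge|y|^p-(p^*-1)^p|x|^p$ on $\R^2$; (ii) $U(x_0,y_0)\le 0$ whenever $|y_0|\le|x_0|$; and (iii) the local convexity property that makes $U(X_t,Y_t)$ a supermartingale whenever $Y$ is a continuous martingale differentially subordinate to $X$, as used in the proof of Theorem~\ref{thm:BW95}. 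The goal is to show that $U(X_t,Z_t)$ remains a supermartingale even though $Z$ is merely a semimartingale carrying an extra drift from the exponential factor.

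First I would apply It\^o's formula to $e_t=e^{\int_0^t V_s\,ds}$ to obtain $dZ_t=V_tZ_t\,dt+dY_t$. Because the finite-variation piece does not contribute to quadratic variation, $\ang{Z}_t=\ang{Y}_t$ and $\ang{X,Z}_t=\ang{X,Y}_t$. A second application of It\^o, this time to $U(X_t,Z_t)$, produces
\[
dU(X_t,Z_t)=dN_t+V_tZ_tU_y(X_t,Z_t)\,dt+\tfrac{1}{2}\bigl(U_{xx}\,d\ang{X}+2U_{xy}\,d\ang{X,Y}+U_{yy}\,d\ang{Y}\bigr),
\]
where $dN_t=U_x\,dX_t+U_y\,dY_t$ is a local-martingale differential. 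The second-order bracket is pointwise non-positive by property (iii) combined with the differential subordination $d\ang{Y}\le d\ang{X}$, precisely as in Burkholder--Wang.

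What remains is to verify that the extra first-order drift $V_tZ_tU_y(X_t,Z_t)\,dt$ is non-positive. Since $V_t\le 0$, this reduces to the sign condition $yU_y(x,y)\ge 0$. A direct differentiation shows, up to a positive factor,
\[
yU_y(x,y)=c_p\,y^2\brak{|x|+|y|}^{p-2}\brak{p|y|+\brak{1-(p-1)(p^*-1)}|x|},
\]
and the identity $(p-1)(p^*-1)=1$ valid for $1<p\le 2$ makes this manifestly non-negative. The main obstacle is the regime $p>2$, where the bracket becomes $p(|y|-(p-2)|x|)$ and can change sign. To overcome this I would either substitute a refined majorant (a standard technique is to replace $U$ by a regularization, or by a two-parameter function tracking the exponential weight) designed so that the sign condition holds globally while preserving (i)--(iii), or else exploit the $L^p$--$L^{p/(p-1)}$ duality of the linear map $Y\mapsto Z$, together with the invariance of $p^*-1$ under $p\leftrightarrow p/(p-1)$, to reduce $p>2$ to the already-resolved case.

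Once the supermartingale property of $U(X_t,Z_t)$ is in hand, taking expectations, using $U(X_0,Z_0)=U(X_0,0)\le 0$, and passing to the limit (with a routine localization argument to control the integrability of $V$ and the local-martingale part) yields $\bE|Z_t|^p-(p^*-1)^p\bE|X_t|^p\le\bE U(X_t,Z_t)\le 0$, which is the claimed bound. Sharpness is inherited from the $V\equiv 0$ case, in which $Z_t=Y_t-Y_0$ reduces the statement to the sharp inequality of Theorem~\ref{thm:BW95}.
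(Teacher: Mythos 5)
The paper does not prove this statement; it is quoted from the reference \cite{BO15}, so there is no in-paper argument to compare against. That said, your Bellman-function sketch is essentially correct for $1<p\le 2$. With $dZ_t=V_tZ_t\,dt+dY_t$, the extra first-order drift $V_tZ_tU_y(X_t,Z_t)\,dt$ produced by It\^o's formula has the right sign precisely because $(p-1)(p^*-1)=1$ in that range, which makes $yU_y$ a non-negative multiple of $|y|^2(|x|+|y|)^{p-2}$ (you wrote $y^2$ where $|y|$ belongs in the displayed formula for $yU_y$, but the sign conclusion is unchanged). Combined with the concavity of Burkholder's $U$ along differentially subordinate directions, the condition $U(X_0,0)\le 0$, and a localization, the supermartingale argument closes as you describe.

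For $p>2$ there is a genuine gap. You correctly observe that $yU_y$ is then proportional to $|y|(|x|+|y|)^{p-2}\bigl(|y|-(p-2)|x|\bigr)$ and hence changes sign, so the Feynman--Kac drift can be positive; but neither of your two proposed workarounds is carried out. The ``refined majorant'' or ``two-parameter function tracking the exponential weight'' is not specified, and it is not automatic that a function satisfying properties (i)--(iii) together with $yU_y\ge 0$ globally exists with the sharp constant $p^*-1$. The duality suggestion is more delicate than it looks: the adjoint on $L^p(\Omega)$ of the map $Y\mapsto Z_\infty=\int_0^\infty e^{\int_s^\infty V_v\,dv}\,dY_s$ involves the \emph{non-adapted} weight $e^{\int_s^\infty V_v\,dv}$, so it is not evidently a martingale transform of the same type, and you offer no time-reversal or backward-SDE device to make it one. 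Note that in the present paper Theorem~\ref{thm:BO15} is invoked only for $1<p\le 2$; the case $p\ge 2$ of Theorem~\ref{thm:MT} is handled by duality at the level of the operators $T_A$ (via $T_A^*=T_{A^*}$), which does not transfer back to a proof of the abstract martingale statement for $p>2$. To establish the theorem as stated for all $1<p<\infty$, the construction in \cite{BO15} is required and is not reproduced in your sketch.
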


\begin{proof}[Proof of Theorem  \ref{thm:MT}]
We first consider the case that $V\equiv 0$. Note that the martingale 
\[
N_t=\int_0^{t\wedge \tau}  A_i(\nabla , \partial_y)^{\mathrm T} Q f(Y_s, B_s) (d\beta_s,dB_s)
\] 
is differentially subordinate to 
$M_t^f=Qf(Y_{t\wedge \tau},B_{t\wedge \tau})$. In addition, since the matrix $A_i$ is orthogonal, that is, $\ang{Av, v}=0$ for all $v\in \R^{d+1}$, $\ang{M^f, N}_t=0$.  Hence Lemma \ref{lem:CE} and Theorem \ref{thm:BW95} gives us $$\|T_i f\|_p \le   {\frac{1}{2}}\cot\!\expr{\frac{\pi}{2 p^*}}\|f\|_p.$$

Next we deal with the case $V\neq 0$. The stochastic integral $$\int_0^{t\wedge \tau}  A_i (\nabla,\partial_y)^{\mathrm T} Q^V f(Y_s, B_s) (d\beta_s,dB_s)$$  is subordinate to $$\int_0^{t\wedge \tau} (\nabla,\partial_y)^{\mathrm T} Q^V f(Y_s, B_s) (d\beta_s,dB_s).$$

Using It\^o's formula for $Q^Vf(Y_t,B_t)$, we have
\begin{align*}
Q^Vf(Y_{t\wedge \tau},B_{t\wedge \tau})=&Q^Vf(Y_0,B_0)+\int_0^{t\wedge \tau} (\nabla,\partial_y)^{\mathrm T} Q^V f(Y_s, B_s) (d\beta_s,dB_s)
\\ &+2\int_0^{t\wedge \tau} \brak{\partial_y^2+\sum_{i=1}^dX_i^2+X_0} Q^V f(Y_s, B_s) ds.
\end{align*}
Since $Q^Vf(x,y)=e^{-y\sqrt{-L}}f(x)$ satisfies 
\[
\brak{\partial_y^2+\sum_{i=1}^dX_i^2+X_0} Q^V f=-VQ^V f,
\]
then we get 
\begin{eqnarray*}
Q^Vf(Y_{t\wedge \tau},B_{t\wedge \tau})=Q^Vf(Y_0,B_0)&+&\int_0^{t\wedge \tau} (\nabla,\partial_y)^{\mathrm T} Q^V f(Y_s, B_s) (d\beta_s,dB_s)\\
&-&2\int_0^{t\wedge \tau} V(Y_s) Q^V f(Y_s, B_s) ds.
\end{eqnarray*}
Suppose  $f\ge 0$. Then $Q^Vf(Y_{t\wedge \tau},B_{t\wedge \tau})$ is a non-negative submartingale. It follows from Lenglart-L\'epingle-Pratelli \cite[Theorem 3.2, part 3)]{MR580107}
 that 
\begin{eqnarray*}
\N{Q^Vf(Y_0,B_0)-2\int_0^{\tau} V(Y_s) Q^V f(Y_s, B_s) ds}_p &\le& p\N{Q^V f(Y_\tau, B_\tau)}\\
&=&p\N{f(Y_\tau)}_p= p\N{f}_p.
\end{eqnarray*}

This yields
\begin{equation}\label{submar|f|}
\N{\int_0^{\tau} (\nabla,\partial_y)^{\mathrm T} Q^V f(Y_s, B_s) (d\beta_s,dB_s)} \le {(p+1)}\N{f}_p.
\end{equation}
For a general $f$, write
\[
A_t f:=Q^Vf(Y_0,B_0)-2\int_0^{t\wedge \tau} V(Y_s) Q^V f(Y_s, B_s) ds.
\]
Notice that since $|Q^Vf|\le Q^V|f|$ and $V$ is non-positive, then we have $|A_t f|\le A_t|f|$ and the above argument shows that \eqref{submar|f|} holds for general  $f$. 

We now assume that $1<p\leq 2$.  Applying Lemma \ref{lem:CE} and Theorem \ref{thm:BO15}, we conclude that
\begin{equation}\label{p<2}
\N{T_if}_p \le 3\left(\frac{p^*-1}{2}\right)\N{f}_p, \,\,\,\,\,\, 1<p\leq 2.  
\end{equation}

To deal with the case of $2\leq p<\infty$, we recall (as in \cite{BB13}), that if $T_A$ is the operator constructed as above with $A_i$ replaced by a general $(d+1)\times (d+1)$ matrix $A$, its adjoint, $T_A^{*}$ is given by $T_A*$.  Thus, by duality, using the fact that $(p^*-1)$  equals $(p-1)$, for $2\leq p<\infty$, and equals $\frac{1}{(p-1)}$, for $1<p\leq 2$, we get

\begin{equation}\label{p>2}
\N{T_if}_p \le 3\left(\frac{p^*-1}{2}\right)\N{f}_p, \,\,\,\,\,\, 2<p\leq \infty.  
\end{equation}

The estimates \eqref{p<2} and \eqref{p>2} give the estimate \eqref{TA-norm1}.

\end{proof}
The same proof above gives the following more general result for conditional expectations (projections) of martingale transforms as above. 
\begin{theorem}\label{thm:MT2}
Let $f\in \mathcal{S}(\M)$ and let $A$ be a $(d+1)\times (d+1)$ matrix of norm $\|A\|$.  Then for $1<p<\infty$, 
\[
T_A f(x)= -\frac{1}{2}\lim_{y_0\to \infty} \bE_{y_0} \brak{ e^{\int_0^{\tau} V(Y_v)dv }\int_0^{\tau}  e^{-\int_0^{s} V(Y_v)dv}A(\nabla,\partial_y)^{\mathrm T} Q^V f(Y_s, B_s) (d\beta_s,dB_s)\mid Y_{\tau}=x},
\]
satisfies 
\begin{equation}\label{TA-norm3} 
\|T_A f\|_p \le \|A\| \left(\frac{3}{2}\right) (p^*-1) \|f\|_p, 
\end{equation}
and 
if $V\equiv0$, 
\begin{equation}\label{TA-norm4}
\|T_A f\|_p \le\|A\|\frac{(p^*-1)}{2} \|f\|_p. 
\end{equation}
\end{theorem}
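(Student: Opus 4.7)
The plan is to repeat almost verbatim the argument already used for Theorem~\ref{thm:MT}, replacing the particular skew matrix $A_i$ by an arbitrary $(d+1)\times(d+1)$ matrix $A$. Two things change: the operator norm $\|A\|$ enters the differential subordination step, and orthogonality of the two martingales is no longer available (so the cotangent factor of \eqref{TA-norm2} must be replaced by $(p^*-1)$, and \eqref{TA-norm4} is what remains). The probabilistic representation of $T_A f$ is proved by repeating the computation of Lemma~\ref{lem:CE}: pair the stochastic integral against $g\in\mathcal S(\M)$, apply the It\^o isometry and the independence of $Y$ and $B$, and use self-adjointness of $L$ to identify the resulting expression as $-2\langle T_A f,g\rangle_{L^2(\mu)}$. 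Nothing in that derivation used the particular entries of $A_i$.

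Consider the stochastic integral
\[
N_t = \int_0^{t\wedge\tau} A(\nabla,\partial_y)^{\mathrm T} Q^V f(Y_s,B_s)\,(d\beta_s,dB_s).
\]
Since $|Av|^2\le\|A\|^2|v|^2$ for every $v\in\R^{d+1}$, the quadratic variation of $N$ is bounded pointwise by $\|A\|^2$ times that of the ``full-gradient'' stochastic integral $X_t=\int_0^{t\wedge\tau}(\nabla,\partial_y)^{\mathrm T} Q^V f\cdot(d\beta,dB)$, so $N$ is differentially subordinate to $\|A\|X$. For $V\equiv 0$, Theorem~\ref{thm:BW95} gives $\|N_\tau\|_p\le(p^*-1)\|A\|\,\|X_\tau\|_p$. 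Since $M_\tau^f=f(Y_\tau)$ and $M_0^f=P_{y_0}f(Y_0)\to 0$ as $y_0\to\infty$, one has $\|X_\tau\|_p\to\|f\|_p$ in this limit. The factor $\tfrac12$ in the representation formula and the contraction property of $g\mapsto\bE[g\mid Y_\tau=\cdot\,]$ from $L^p(\bP)$ to $L^p(\mu)$ (valid because $Y_\tau\sim\mu$) then yield \eqref{TA-norm4}.

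For a general non-positive $V$, the representation of $T_A f$ fits exactly the framework of Theorem~\ref{thm:BO15}, with martingale increment $dY_s=A(\nabla,\partial_y)^{\mathrm T} Q^V f(Y_s,B_s)\cdot(d\beta_s,dB_s)$, weight $V_s=V(Y_s)$, and dominating martingale $\|A\|X$. The It\^o expansion of $Q^V f(Y_{t\wedge\tau},B_{t\wedge\tau})$ together with the Lenglart--L\'epingle--Pratelli submartingale inequality, exactly as done in the proof of Theorem~\ref{thm:MT}, gives $\|X_\tau\|_p\le(p+1)\|f\|_p$. Combining these, halving, and using $p+1\le 3$ for $1<p\le 2$ produces \eqref{TA-norm3} in that range. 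The case $p>2$ follows by duality: the adjoint of $T_A$ with respect to $\mu$ is $T_{A^{\mathrm T}}$ (integration by parts at the level of the defining double integral $\int_0^\infty y\,P_y\langle A(\nabla,\partial_y)P_y f,(\nabla,\partial_y)\cdot\rangle\,dy$ simply swaps the two slots of the bilinear form carrying $A$), and $\|A^{\mathrm T}\|=\|A\|$, while $p^*-1=q^*-1$ for $q=p/(p-1)$, so the bound for $q<2$ transfers.

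The only mildly delicate point I anticipate is justifying the identification $T_A^*=T_{A^{\mathrm T}}$ for the duality step; once $T_A$ is written as an iterated integral involving the bilinear form $\langle A(\nabla,\partial_y)P_y f,(\nabla,\partial_y)P_y g\rangle$, self-adjointness of $P_y$ makes the identification immediate by transposing $A$. Every other ingredient — the Feynman--Kac-type representation, the quadratic variation computation of Lemma~\ref{martingale:lk}, and the submartingale bound on $X_\tau$ — is inherited directly from the specialized argument used for $T_i$.
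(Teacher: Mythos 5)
Your proposal is correct and follows essentially the same route the paper takes: the authors dispatch Theorem \ref{thm:MT2} with the single remark that ``the same proof above gives the following more general result,'' and what you wrote is exactly that proof of Theorem \ref{thm:MT} carried out verbatim with $A_i$ replaced by a general $A$, tracking the factor $\|A\|$ through the differential subordination, applying Theorems \ref{thm:BW95}/\ref{thm:BO15}, the Lenglart--L\'epingle--Pratelli submartingale bound, and the duality argument. Two cosmetic remarks: the paper subordinates $N$ directly to $M^f_t=Q^Vf(Y_{t\wedge\tau},B_{t\wedge\tau})$ rather than to the zero-starting integral $X$, which avoids having to argue that $\|P_{y_0}f\|_p\to 0$; and in the duality step the adjoint is $T_{A^*}$ (conjugate transpose) rather than $T_{A^{\mathrm T}}$, which matters since the examples use complex matrices, although $\|A^*\|=\|A\|$ so your conclusion is unaffected.
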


\subsection{Example 1. Lie groups of compact type}
Let $G$ be a Lie group of compact type with Lie algebra $\mathfrak{g}$. We endow $G$ with a bi-invariant Riemannian structure and consider an orthonormal basis $X_1,\cdots, X_d$ of $\mathfrak{g}$.  In this setting the Laplace-Beltrami operator can be written as 
\[
L= \sum_{i=1}^d X_i^2.
\]
It is essentially self-adjoint on the space of smooth and compactly supported functions. Then $X_i^*=-X_i$ and $X_i$ commutes with $P_y$. We easily see that
\[
T_i=\int_0^{+\infty} y P_y \brak{\sqrt{-L} X_i-X_i^* \sqrt{-L}} P_y dy
=2X_i \sqrt{-L} \int_0^{+\infty} y P_y P_y dy=\frac{1}{2} X_i (\sqrt{-L})^{-1}.
\]
As a consequence of Lemma \ref{lem:CE} and Theorem  \ref{thm:MT}, the Riesz transform is bounded on $L^p(M)$ and we have the estimate
\[
\N{X_i (\sqrt{-L})^{-1}}_{L^p\to L^p} \le  \cot\!\expr{\frac{\pi}{2 p^*}}.
\]
This inequality was first proved  in  \cite{MR1650585} where the proof is also based on the martingale inequality \eqref{eq:OMT}

\subsection{Example 2. Heisenberg group}

Another interesting example is given by the Heisenberg group. The Heisenberg group is the set 
\[
\bH^n=\{(x,y,z): x\in \R^n, y\in \R^n, z\in \R\}
\]
 endowed with the group law
\[
(x,y,z) \cdot (x',y',z')=\brak{x+x',y+y',z+z'+\frac12\brak{\ang{x,y'}_{\R^n}-\ang{y,x'}_{\R^n}}}.
\]
Consider the left-invariant vector fields: for any $j \in \bN$, $1\le j\le n$,
\[
X_j=\partial_{x_j}-\frac{y_j}{2}\partial_z,\quad Y_j=\partial_{y_j}+\frac{x_j}{2}\partial_z, \quad Z=\partial_z,
\]
and the sublaplacian
\[
L=\sum_{j=1}^n\brak{X_j^2+Y_j^2}.
\]

Denote by $\Scal(\bH^n)$ ($=\Scal (\R^{2n+1})$) the Schwartz space of smooth rapidly decreasing functions on the Heisenberg group. Equivalently, 
\[
\Scal(\bH^n)=\crl{f\in C^{\infty}(\bH^n):\sup_{\mathbf x\in \bH }(1+|\mathbf x|)^q \abs{X^K f(\mx)}<\infty, \forall K \in (\bN^*)^{2n}, \forall q\in \bN}
\]
where $|\mx|=((x^2+y^2)^2+z^2)^{1/4}$, $\bN^*=\bN\cup \{0\}$ and 
\[
X^K=X_1^{k_1}\cdots X_n^{k_n} Y_1^{k_{n+1}} \cdots Y_n^{k_{2n}} , \quad \text{where } K=(k_1,k_2,\cdots, k_{2n}) \in (\bN^*)^{2n}.
\]
Let $d(\mx,\my)$ be the Carnot-Carath\'eodory distance. Notice that $|\mx|\simeq d(0,\mx)$.

The sublaplacian $L$ is essentially self-adjoint on $\Scal(\bH^n)$. Denote by $[U,V]=UV-VU$ the commutator of $U$ and $V$, then for any $1\le j,\, k\le n$,
\[
[X_j,Y_k]=\delta_{jk}Z, \quad [X_j,Z]=0, \quad [Y_k,Z]=0.
\]
Let $W_j=X_j+iY_j$ be the complex gradient, then
\begin{equation}\label{eq:WL}
W_jL=(L-2iZ)W_j.
\end{equation}
In other words, we have $[W_j,L]=-2iZW_j$. Note also that $[W_j,Z]=0$ and  $[L,Z]=0$.

The right invariant vector fields are given by
\[
\hat X_j=\partial_{x_j}+\frac{y_j}{2}\partial_z,\quad \hat Y_j=\partial_{y_j}-\frac{x_j}{2}\partial_z, \quad \hat Z=\partial_z.
 \]
Then we have the right-invariant sublaplacian and complex gradient, denoted by $\hat L$ and $\hat W_j$, respectively.

Let $(H_t)_{t>0}=(e^{t L})_{t>0}$  be the heat semigroup generated by $L$ and $h_t(\mx)$ be the corresponding heat kernel at $0$. Hence $h_t(\mx,\my)=h_t(\mx \my^{-1})$. Let  $(P_t)_{t>0}=(e^{-t\sqrt{-L}})_{t>0}$ be the Poisson semigroup and $p_t$ be the Poisson kernel at $0$. Similarly, let $(\hat H_t)_{t>0}=(e^{t\hat L})_{t>0}$ and $(\hat P_t)_{t>0}=(e^{-t\sqrt{-\hat L}})_{t>0}$ be the heat and Poisson semigroups generated by $\hat L$. The corresponding heat and Poisson kernels are denoted by $\hat h_t$ and $\hat p_t$.  

We have that  $\Scal(\bH^n)$ is left globally stable by $L$ and by $H_t$ for any $t\ge 0$ (see \cite[Lemma 2.1]{BBBC}).
 Moreover, define  $\sqrt {-L}f$ via the heat semigroup as follows:
\begin{equation}\label{eq:sqrtL}
\sqrt {-L}f(\mathbf x) =-\frac{1}{2\sqrt{\pi}} \int_0^{\infty} t^{-3/2} (H_tf(\mathbf x)-f(\mathbf x)) dt.
\end{equation}
Then
\begin{lemma}\label{lem:Schwartz}
For any $f\in \Scal (\bH^n)$, we have $\sqrt{-L} f\in \Scal (\bH^n)$.
\end{lemma}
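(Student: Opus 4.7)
My plan is to analyze $\sqrt{-L}f$ via the subordination formula. After an integration by parts in $t$ (justified because $t^{-1/2}(H_tf-f)\to 0$ at both $t=0$ and $t=\infty$), the formula becomes
\[
\sqrt{-L}f = -\frac{1}{\sqrt{\pi}}\int_0^\infty t^{-1/2}\,H_t(Lf)\,dt,
\]
which makes explicit the cancellation $\int_{\bH^n}Lf\,d\mu=0$. I would split at $t=1$. For the small-time part $\int_0^1 t^{-1/2}H_t(Lf)\,dt$, use that $Lf\in\mathcal{S}(\bH^n)$ by Lemma 2.1 and that $H_s$ preserves $\mathcal{S}(\bH^n)$ with Schwartz seminorms uniformly bounded on $s\in[0,1]$ (via the Gaussian heat-kernel estimates on $\bH^n$ and the quasi-triangle $(1+|\mathbf{x}|)^q\lesssim(1+|\mathbf{x}\mathbf{y}^{-1}|)^q(1+|\mathbf{y}|)^q$); this piece then lies in $\mathcal{S}(\bH^n)$.

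For the large-time part, rewrite $H_t(Lf)(\mathbf{x})=\int(h_t(\mathbf{x}\mathbf{y}^{-1})-h_t(\mathbf{x}))\,Lf(\mathbf{y})\,d\mathbf{y}$ using $\int Lf\,d\mu=0$, and split the convolution based on whether $|\mathbf{y}|\ll|\mathbf{x}|$, $|\mathbf{y}|\sim|\mathbf{x}|$, or $|\mathbf{y}|\gg|\mathbf{x}|$. In the small-$\mathbf{y}$ region, a group Taylor expansion of $h_t$ at $\mathbf{y}=0$ combined with the gradient estimate $|X_jh_t|, |Y_jh_t|, |Zh_t|\lesssim t^{-(Q+1)/2}\exp(-cd(0,\cdot)^2/t)$ (with $Q=2n+2$) yields a Gaussian factor in $|\mathbf{x}|$ that integrates against $t^{-1/2}$ to give polynomial spatial decay; the two other regions are controlled by direct Schwartz decay of $Lf$ together with the volume bound $\int h_t\leq 1$. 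If sharper decay is needed, iterate via the spectral identity
\[
\sqrt{-L}f = \frac{(-1)^N}{\Gamma(N-1/2)}\int_0^\infty t^{N-3/2}H_t(L^Nf)\,dt, \qquad N\ge 1,
\]
using the vanishing of higher weighted moments of $L^Nf$ (which follow by integration by parts from $Lx_j=Ly_j=Lz=0$ in the weighting where $x_j,y_j$ have weight $1$ and $z$ has weight $2$) to subtract a higher-order Taylor polynomial of $h_t$ and obtain correspondingly faster decay.

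For the derivative estimates $X^K\sqrt{-L}f$ required by the definition of $\mathcal{S}(\bH^n)$, I use the Duhamel commutator identities
\[
[X_j,H_t]=2Z\int_0^t H_{t-s}Y_jH_s\,ds, \qquad [Y_j,H_t]=-2Z\int_0^t H_{t-s}X_jH_s\,ds,
\]
which follow from $[X_j,L]=2ZY_j$, $[Y_j,L]=-2ZX_j$, and $[Z,L]=0$. Iterating these expresses $X^KH_tf$ as a finite sum of terms of the form $H_s$ applied to products of left-invariant derivatives and $Z$-powers of $f$, each of which remains in $\mathcal{S}(\bH^n)$ by Lemma 2.1, reducing the estimation for derivatives back to the scalar case.

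The main technical obstacle is the spatial estimate in the large-time regime: the kernel $\int_1^\infty t^{-3/2}h_t(\mathbf{x})\,dt$ by itself only decays like $|\mathbf{x}|^{-Q-1}$ (as a straightforward substitution $u=|\mathbf{x}|^2/t$ shows), so extracting Schwartz-order decay requires carefully matching the order of the group Taylor expansion of the Heisenberg heat kernel to the moment-vanishing of $L^Nf$ and simultaneously controlling the Taylor remainder by the non-abelian gradient estimates. Keeping track of both Gaussian and polynomial contributions to the remainder in the three-region split, uniformly in the $t$-weight, is the crux of the argument.
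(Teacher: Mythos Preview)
Your plan is workable in outline but substantially more complicated than what the paper does, and one of your steps is not correct as stated.

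\textbf{The paper's shortcut.} The paper never touches heat-kernel gradient bounds, Taylor expansions, moment vanishing, or Duhamel formulas. The key observation is that the \emph{right}-invariant fields $\hat X_j,\hat Y_j$ commute with the left-invariant semigroup $H_t$, so derivatives pass straight through: $\hat X^K H_t f = H_t(\hat X^K f)$. Since $\mathcal S(\bH^n)$ is equally well described by right-invariant derivatives, this reduces everything to estimating $H_t g$ for a Schwartz function $g=\hat X^K f$. For $t\ge 1$ the paper uses the dilation scaling of the sub-Riemannian Brownian motion, $d(0,Y_t)\stackrel{d}{=}\sqrt t\,d(0,Y_1)$, to turn the Schwartz bound $|g(\mathbf y)|\le C(1+d^2(0,\mathbf y))^{-q}$ directly into $|H_tg(\mathbf x)|\le C t^{-q}(1+d^2(0,\mathbf x))^{-q}$, with $q$ as large as one likes. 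The singular part $\int_0^1 t^{-3/2}(H_tf-f)\,dt$ is rewritten as $\int_0^1 t^{-3/2}\int_0^t H_s(Lf)\,ds\,dt$ and handled by the small-time stability of $\mathcal S(\bH^n)$ under $H_s$. That is the whole proof.

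\textbf{The gap in your derivative step.} Your Duhamel identity $[X_j,H_t]=2Z\int_0^t H_{t-s}Y_jH_s\,ds$ is correct, but the claim that iterating it yields a \emph{finite} sum of terms of the form ``$H_s$ applied to derivatives of $f$'' is not: the commutator regenerates $Y_jH_s$, which in turn regenerates $X_jH_u$, and the recursion does not terminate. One can sum the resulting series (it is the expansion of the exact intertwining $W_jH_t=H_t e^{-2itZ}W_j$, $\bar W_jH_t=H_te^{2itZ}\bar W_j$, which follows from $W_jL=(L-2iZ)W_j$ and $[L,Z]=0$), but that is a different argument from the one you describe. The right-invariant trick bypasses this issue entirely.

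\textbf{On the large-time part.} Your moment-vanishing/Taylor-remainder program for $\int_1^\infty$ can be made to work, and your observation that $L^N$ annihilates homogeneous polynomials of degree $<2N$ is the right mechanism for gaining decay. But this machinery is unnecessary here: once derivatives are handled by commutation, a single scaling estimate gives $t^{-q}$ decay for every $q$, with the spatial decay coming for free from the Schwartz input. You are solving a much harder problem than the lemma requires.
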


\begin{proof}
Let $f\in \Scal (\bH^n)$, then for any $K=(k_1,k_2,\cdots, k_{2n}) \in (\bN^*)^{2n}$ and $q\in \bN$,
\[
\abs{\hat X^K f(\mx)} \le \frac{C}{(1+d^2(0,\mx))^q},
\]
where $\hat X^K=\hat X_1^{k_1}\cdots \hat X_n^{k_n} \hat Y_1^{k_{n+1}} \cdots \hat Y_n^{k_{2n}}$.

Following the argument in \cite{BBBC}, we have  that for any $t\ge 1$
\begin{align*}
\abs{ X^K  H_tf(0)}&= \abs{H_t \hat X^K  f(0)}
\le H_t \brak{\frac{C}{(1+d^2(0,\cdot))^q}}(0) \le \bE_0 \brak{\frac{C}{(1+d^2(0,X_t))^q}}\nonumber
\\ &\le \nonumber
\bE_0 \brak{\frac{C}{(1+td^2(0,X_1))^q}} \le 
\frac{C}{t^q}\bE_0 \brak{\frac{1}{(1+d^2(0,X_1))^q}} 
\\ &\le 
\frac{C}{t^q}H_1\brak{\frac{C}{(1+d^2(0,\cdot))^q}}(0).
\end{align*}

By the left invariance, we obtain that 
\begin{equation}\label{eq:large t}
\abs{ X^K  H_tf(x)}= \abs{H_t \hat X^K  f(0)} \le \frac{C}{t^q} (1+d^2(0,\mx))^{-q}
\end{equation}
Also recall that for $0<t<1$, 
\begin{align}\label{eq:small t}
\abs{ X^K  H_tf}&= \abs{H_t \hat X^K  f}
 \le e^{C t} (1+d^2(0,\mx))^{-q}.
\end{align}

Ignoring the constant, we  rewrite \eqref{eq:sqrtL} as 
\[
\sqrt {-L}f(\mathbf x)=  \int_0^{1} t^{-3/2} \int_0^t LH_sf(\mathbf x)ds dt+\int_{1}^{\infty} t^{-3/2} (H_tf(\mathbf x)-f(\mathbf x)) dt.
\]
Using \eqref{eq:small t} and \eqref{eq:large t}, we obtain
\[
\sup _{\mathbf x\in \bH}(1+|\mathbf x|)^q\abs{X^K  \sqrt {-L}f(\mathbf x)} \lesssim  \int_0^{1} t^{-1/2} dt+\int_{1}^{\infty} t^{-3/2} dt<\infty. 
\]
\end{proof}

\begin{remark}
Consequently, $[W_j,\sqrt{-L}]$ is defined pointwisely on $\Scal(\bH^n)$.  Similarly, we can also show that  $2iT_jZ f\in \Scal(\bH^n)$ by using the subordination formula for the Poisson semigroup 
\[
P_tf=\frac{t}{2\sqrt{\pi}} \int_0^{\infty} e^{-\frac{t^2}{4s}} H_sf \frac{ds}{s^{3/2}}.
\]

\end{remark}

Consider the operator
\[
\Tcal_j=\int_0^{+\infty} y P_y ( W_j \sqrt{-L} +\sqrt{-L} W_j)P_y dy,
\]
we have the following equality.
\begin{proposition}\label{prop:equi}
For any $f\in \Scal(\bH^n)$, there holds
\begin{equation}\label{eq:commutator}
[W_j, \sqrt{-L}] f=2i\,\Tcal_jZf.
\end{equation}
\end{proposition}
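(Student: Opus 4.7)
The key structural observation is that $Z$ is central in the algebra at hand: $[W_j,Z]=0=[L,Z]$, while $[W_j,L]=-2iZ\,W_j$. Writing $A:=2iZ$, the commutator relation becomes $W_jL=(L-A)W_j$ and $LW_j=W_j(L+A)$. Since $A$ commutes with $L$, iteration and functional calculus give the intertwinings
\[
W_j\phi(-L) = \phi(-L+A)W_j, \qquad \phi(-L)W_j = W_j\phi(-L-A),
\]
which, with $S:=\sqrt{-L}$, $\tilde S:=\sqrt{-L+A}$, $\tilde{\tilde S}:=\sqrt{-L-A}$, $\tilde P_y := e^{-y\tilde S}$ and $\tilde{\tilde P}_y := e^{-y\tilde{\tilde S}}$, specialise to
\[
W_jS=\tilde S W_j,\quad SW_j=W_j\tilde{\tilde S},\quad W_jP_y=\tilde P_y W_j,\quad P_yW_j=W_j\tilde{\tilde P}_y.
\]
All of $P_y,\tilde P_y,\tilde{\tilde P}_y,S,\tilde S,\tilde{\tilde S}$ lie in the joint functional calculus of the commuting pair $(L,Z)$ and so mutually commute.

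\textbf{Simplifying $\Tcal_j$.} I would use $P_yW_j=W_j\tilde{\tilde P}_y$ to move $W_j$ through the leftmost $P_y$ and commute scalar-calculus operators past $S$:
\begin{align*}
P_y(W_jS+SW_j)P_y
&= W_j\tilde{\tilde P}_yS P_y + SP_yW_jP_y \\
&= W_jS\tilde{\tilde P}_yP_y + SW_j\tilde{\tilde P}_yP_y
= (W_jS+SW_j)\tilde{\tilde P}_yP_y.
\end{align*}
Since $\tilde{\tilde P}_yP_y=e^{-y(S+\tilde{\tilde S})}$ and $\int_0^\infty y e^{-y(S+\tilde{\tilde S})}\,dy=(S+\tilde{\tilde S})^{-2}$, integration in $y$ gives $\Tcal_j = (W_jS+SW_j)(S+\tilde{\tilde S})^{-2}$. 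Using $SW_j=W_j\tilde{\tilde S}$ one factors $W_jS+SW_j=W_j(S+\tilde{\tilde S})$, so that
\[
\Tcal_j = W_j(S+\tilde{\tilde S})^{-1}.
\]

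\textbf{Closing the identity.} Since $A$ commutes with everything in sight and $S^2-\tilde{\tilde S}^2=-L-(-L-A)=A$,
\begin{align*}
2i\,\Tcal_j Zf
= \Tcal_j A f
&= W_jA(S+\tilde{\tilde S})^{-1}f
= W_j(S-\tilde{\tilde S})(S+\tilde{\tilde S})(S+\tilde{\tilde S})^{-1}f \\
&= W_j(S-\tilde{\tilde S})f = W_jSf - SW_jf = [W_j,\sqrt{-L}]f,
\end{align*}
which is the stated identity.

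\textbf{Main obstacle.} The algebraic core above is short; the real work is in rigorously justifying the intertwinings on $\Scal(\bH^n)$. Because $-L-2iZ$ is not positive (it takes imaginary values on joint $Z$-eigenspaces), the operators $\tilde{\tilde S}$ and $\tilde{\tilde P}_y$ have to be defined with some care. The natural path is to first establish the heat-semigroup version $W_jH_t=e^{-tA}H_tW_j$ by solving the ODE coming from $[W_j,L]=-A W_j$ — this step is clean because $e^{-tA}=e^{-2itZ}$ is a unitary multiplier that commutes with $H_s$ — and then to transfer the identity to the Poisson semigroup through the subordination formula for $P_y$. Pointwise convergence of $\int_0^\infty y\tilde{\tilde P}_y P_y\,dy$ on $\Scal(\bH^n)$, together with the preservation of the Schwartz class, can be obtained from the decay estimates already established in the proof of Lemma \ref{lem:Schwartz}.
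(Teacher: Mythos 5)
Your algebraic core is correct and, after checking, your intermediate formula $\Tcal_j = W_j(S+\tilde{\tilde S})^{-1}$ agrees exactly with what the paper obtains on each eigenfunction ($\Tcal_j\,\wtvp_k^\lambda=(\sqrt{-L}+\sqrt{(2k+n)|\lambda|})^{-1}W_j\,\wtvp_k^\lambda$), so the two approaches are computing the same thing. But the route is genuinely different in presentation: the paper verifies \eqref{eq:commutator} on the explicit Laguerre eigenfunctions $\wtvp_k^\lambda$, proves in Lemma~\ref{lem:pt} that the resulting series for $[W_j,\sqrt{-L}]h_t$ converges absolutely, and then transfers the identity to a general $f\in\Scal(\bH^n)$ by integrating against $h_t$ and letting $t\to0$. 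You instead stay at the operator level, invoking the joint functional calculus of the commuting pair $(L,Z)$ together with the intertwinings $W_j\phi(-L)=\phi(-L+A)W_j$ and $\phi(-L)W_j=W_j\phi(-L-A)$. Your version is considerably shorter and transparently exposes why the factor $2i Z$ appears; the paper's version trades that elegance for very explicit control of the spectral sums, which is what they ultimately use anyway in the last paragraph of their proof (the heat-kernel pairing and $t\to0$).

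One technical remark you make is off and should be fixed. Since $Z=\partial_z$ is skew-adjoint, $iZ$ is self-adjoint, so $-L-2iZ$ is \emph{self-adjoint with real spectrum}; it does not ``take imaginary values.'' The genuine issue is that its spectrum can be \emph{negative}: on a joint eigenspace where $-L$ has eigenvalue $(2k+n)|\lambda|$ and $iZ$ has eigenvalue $\lambda$, the operator $-L-2iZ$ has eigenvalue $(2k+n)|\lambda|-2\lambda$, which is negative precisely when $n=1$, $k=0$, $\lambda>0$. With the principal branch, $\tilde{\tilde S}=\sqrt{-L-2iZ}$ is then purely imaginary on that piece of the spectrum, so $\tilde{\tilde P}_y=e^{-y\tilde{\tilde S}}$ remains a contraction; and $S+\tilde{\tilde S}$ still has real part $\sqrt{(2k+n)|\lambda|}>0$ on the support of the joint spectral measure of any Schwartz function (away from $\lambda=0$), so $\int_0^\infty y\,\tilde{\tilde P}_y P_y\,dy$ converges. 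Once you state this carefully --- real but possibly negative spectrum, principal square root, $\tilde{\tilde P}_y$ a contraction --- your argument is rigorous and your proposed path through $W_j H_t = e^{-tA}H_t W_j$ and subordination goes through.
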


In order to prove Proposition \ref{prop:equi}, we recall first the spectral decomposition of the sublaplacian on the Heisenberg group (see, for instance, \cite[Section 2]{RT16}).  Let $f\in L^2(\bH^n)$ be a radial function.  That is, for any $\mx=(x,y,z)\in \bH^n$, $f(\mx)=f(r,z)$ with $r=\|\mx\|$. Here $\|\mx\|$ denotes the Euclidean norm of the projection of $\mx$ onto the plane $\{z=0\}$. The spectral decomposition of the sublaplacian is given by 
\[
Lf(r,z)=-(2\pi)^{-n-1} \int_{-\infty}^{\infty} \sum_{k=0}^{\infty}(2k+n)|\ld| C_k^{\ld}(f^\ld) \varphi_k^{\ld}(r) e^{-i\ld z}|\ld|^n d\ld,
\]
where $ \varphi_k^{\ld}$ are   the scaled Laguerre functions 
\[
 \varphi_k^{\ld}(r)=L_k^{n-1}\Big(\frac{1}{2}|\ld|r^2\Big) e^{-\frac14|\ld| r^2},
\]
and $C_k^{\ld}(f^\ld)$ are the  Laguerre coefficients of the radial function $f^{\ld}$  given by 
\[
C_k^{\ld}(f^\ld)=C_{n,\ld}\frac{k!(n-1)!}{(k+n-1)!} \int_{\R^2} f^{\ld}(r) \varphi_k^{\ld}(r) dxdy.
\]
Notice that $\{ \varphi_k^{\ld}\}_{k=0}^{\infty}$ forms an orthogonal basis for the subspace consisting of radial functions in $L^2(\R^{2n})$.
The spectral decomposition of the associated heat semigroup is
\[
e^{tL}f(r,z)=(2\pi)^{-n-1} \int_{-\infty}^{\infty} \sum_{k=0}^{\infty}e^{-(2k+n)|\ld|t} C_k^{\ld}(f^\ld) \varphi_k^{\ld}(r) e^{-i\ld z}|\ld|^n d\ld.
\]
We also have the spectral decomposition for the heat kernel
\begin{equation}\label{eq:hk}
h_t(r,z)=C \int_{-\infty}^{\infty} \sum_{k=0}^{\infty}e^{-(2k+n)|\ld|t} \varphi_k^{\ld}(r) e^{-i\ld z}|\ld|^n d\ld.
\end{equation}

\begin{lemma}\label{lem:pt} For any $r>0$, $z\in \R$ and $t>0$,
we have
\begin{align}
\label{eq:equiv WL}
[W_j,\sqrt{-L}] h_t(r,z)=\int_{-\infty}^{\infty} \sum_{k=0}^{\infty}e^{-(2k+n)|\ld|t} [W_j,\sqrt{-L}] \varphi_k^{\ld}(r) e^{-i\ld z}|\ld|^n d\ld,
\end{align}
where the right hand side converges uniformly in $r$ and $z$.
\end{lemma}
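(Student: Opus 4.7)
The plan is to expand $h_t$ via the spectral decomposition \eqref{eq:hk} and then justify that both $W_j$ and $\sqrt{-L}$ can be applied term by term. The identity $[W_j,\sqrt{-L}]h_t = W_j\sqrt{-L}h_t - \sqrt{-L}W_j h_t$ is then assembled from two such termwise computations. The main tool is the absolute uniform convergence of \eqref{eq:hk}, which is driven by the exponential weight $e^{-(2k+n)|\lambda|t}$: the Laguerre functions $\varphi_k^\lambda(r)=L_k^{n-1}(\tfrac{1}{2}|\lambda|r^2)e^{-\frac{1}{4}|\lambda|r^2}$ grow at most polynomially in $k$ against the Gaussian, the weight $|\lambda|^n$ is polynomial, and the exponential decay in $(k,|\lambda|)$ dominates any such polynomial growth. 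In particular, the series remains uniformly convergent after any finite number of differentiations and after multiplication by polynomials in $(k,\lambda)$.

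The operator $W_j = \partial_{x_j} + i\partial_{y_j} + \tfrac{1}{2}(ix_j - y_j)\partial_z$ has polynomial coefficients and preserves the mode $e^{-i\lambda z}$ (since $[W_j,Z]=0$), so it can be applied termwise to \eqref{eq:hk} to yield a uniformly convergent representation of $W_j h_t$. For $\sqrt{-L}$ I would use the subordination formula \eqref{eq:sqrtL}. Inserting \eqref{eq:hk} into $H_s h_t - h_t = h_{s+t} - h_t$ and using the elementary bound $|s^{-3/2}(e^{-\mu s} - 1)| \leq \min(\mu s^{-1/2}, s^{-3/2})$ for $\mu=(2k+n)|\lambda|>0$ makes the resulting triple integral in $(s,\lambda,k)$ absolutely convergent, so Fubini permits swapping the $s$-integral with $\int d\lambda \sum_k$. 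The scalar identity $-\frac{1}{2\sqrt{\pi}}\int_0^\infty s^{-3/2}(e^{-\mu s}-1)\,ds = \sqrt{\mu}$ then shows that $\sqrt{-L}$ acts on each mode $\varphi_k^\lambda(r) e^{-i\lambda z}$ as multiplication by $\sqrt{(2k+n)|\lambda|}$, and the resulting representation of $\sqrt{-L}h_t$ still converges uniformly in $(r,z)$ because the extra factor is defeated by half of the exponential weight. The same Fubini--subordination argument applied with $h_t$ replaced by the uniformly convergent series for $W_j h_t$ yields a uniformly convergent representation of $\sqrt{-L}W_j h_t$; here the commutation relation \eqref{eq:WL} together with $[W_j,Z]=0$ allows one to identify each summand as $\sqrt{-L}(W_j\varphi_k^\lambda(r) e^{-i\lambda z})$ weighted by $e^{-(2k+n)|\lambda|t}|\lambda|^n$.

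Finally, applying $W_j$ termwise to the uniformly convergent series for $\sqrt{-L}h_t$ and subtracting the series for $\sqrt{-L}W_j h_t$ produces exactly the claimed formula with $[W_j,\sqrt{-L}]$ inside the integral/sum, with uniform convergence in $(r,z)$ inherited from the convergence at each stage. The main obstacle is that the individual modes $\varphi_k^\lambda(r) e^{-i\lambda z}$ are not in $L^2(\bH^n)$ and therefore lie outside the standard $L^2$-spectral domain of $\sqrt{-L}$, so the spectral action cannot simply be invoked mode by mode. This difficulty is bypassed by working throughout with the pointwise subordination formula \eqref{eq:sqrtL} and invoking Fubini on the absolutely convergent triple integral, rather than an $L^2$ spectral calculus on individual modes.
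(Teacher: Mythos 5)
Your argument takes a genuinely different route from the paper's. You act with $W_j$ and $\sqrt{-L}$ directly on the individual modes $\varphi_k^\lambda(r)e^{-i\lambda z}$ and estimate the resulting series through Laguerre-function asymptotics, treating $\sqrt{-L}$ by the pointwise subordination formula together with Fubini. The paper avoids any direct Laguerre estimate: writing $\widetilde\varphi_k^\lambda = e^{(2k+n)|\lambda|t_0}H_{t_0}\widetilde\varphi_k^\lambda$ with $t_0=t/2$, it expresses $W_j\widetilde\varphi_k^\lambda$ as the convolution $e^{(2k+n)|\lambda|t_0}\int W_{j;\mathbf{x}}h_{t_0}(\mathbf{x},\mathbf{y})\widetilde\varphi_k^\lambda(\mathbf{y})\,d\mathbf{y}$, which it bounds by $Ct_0^{-1/2}\|\widetilde\varphi_k^\lambda\|_{L^\infty}$ via the $L^1$ gradient bound for the heat kernel; for the $\sqrt{-L}W_j$ piece it transfers to the right-invariant operators $\hat W_j\sqrt{-\hat L}$ acting on $h_{t_0}$, and $\int|\hat W_j\sqrt{-\hat L}h_{t_0}|<\infty$ is precisely what Lemma \ref{lem:Schwartz} gives. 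The structural payoff of the paper's approach is that only the trivial sup-norm of $\widetilde\varphi_k^\lambda$ enters, all the hard analysis being pushed onto $h_{t_0}$, whereas your approach requires quantitative control of $W_j\varphi_k^\lambda$ and its images under $\sqrt{-L}$ uniformly in $(r,z)$ and polynomially in $(k,\lambda)$.

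Two points in your proposal are asserted rather than established and should be flagged. First, the claim that the series ``remains uniformly convergent after any finite number of differentiations'' needs a concrete Laguerre bound: $W_j$ brings down $\partial_r$ (hence Laguerre derivatives and factors $|\lambda|r$ from the Gaussian) together with the variable coefficient $\tfrac12(ix_j-y_j)\partial_z$ (another factor $|\lambda|r$), and you must verify that $\sup_r\bigl|W_j\bigl[\varphi_k^\lambda(r)e^{-i\lambda z}\bigr]\bigr|$ grows at most polynomially in $(k,|\lambda|)$; this is plausible via three-term recursions for $L_k^{n-1}$ but is not automatic. Second, when you apply subordination to $W_jh_t$ you implicitly need $H_sW_j\widetilde\varphi_k^\lambda = e^{\nu s}W_j\widetilde\varphi_k^\lambda$ with $\nu = -(2k+n)|\lambda|+2\lambda$, obtained from \eqref{eq:WL} and $[W_j,Z]=0$. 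For $n=1$, $k=0$, $\lambda>0$ this gives $\nu=\lambda>0$, so $e^{\nu s}$ blows up in the subordination integral; the argument survives only because $W_j\widetilde\varphi_0^\lambda=0$ for $\lambda>0$ (annihilation of the ground state), a cancellation your write-up does not address. Neither issue is fatal, but neither is routine; the paper's regularization by $H_{t_0}$ is designed precisely to sidestep both.
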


\begin{proof}
For any $\mx=(x,y,z)\in \bH$, denote $\wtvp_k^{\ld}(\mx)=\wtvp_k^{\ld}(r,z)= \varphi_k^{\ld}(r) e^{-i\ld z}|\ld| $, where $r=\|\mx\|$. Then for any $t_0>0$
\[
H_{t_0}\wtvp_k^{\ld}=e^{-(2k+n)|\ld|t_0} \wtvp_k^{\ld},
\]
and 
\[
W_j\wtvp_k^{\ld}(\mx)=e^{(2k+n)|\ld|t_0} W_j H_{t_0} \wtvp_k^{\ld}(\mx)
=e^{(2k+n)|\ld|t_0} \int_{\bH^n} W_{j;\mx} h_{t_0}(\mx,\my) \wtvp_k^{\ld}(\my) d\my.
\]
Consequently,
\begin{align*}
\abs{W_j\sqrt{-L}\,\wtvp_k^{\ld}(\mx)}
&=\sqrt{(2k+n)|\ld| } \,e^{(2k+n)|\ld|t_0} \abs{ \int_{\bH^n} W_{j;\mx} h_{t_0}(\mx,\my) \wtvp_k^{\ld}(\my) d\my}
\\ &\le
 \sqrt{(2k+n)|\ld|}\,  e^{(2k+n)|\ld|t_0} \N{W_{j;\mx} h_{t_0}(\mx,\cdot)}_{L^1} \N{\wtvp_k^{\ld}}_{L^{\infty}}
 \\ &\le
Ct_0^{-1/2} \sqrt{(2k+n)|\ld|}\,  e^{(2k+n)|\ld|t_0}  \N{\wtvp_k^{\ld}}_{L^{\infty}}.
\end{align*}
Take $t_0=\frac{t}{2}$, then 
\begin{align*}
\abs{e^{-(2k+n)|\ld|t} W_j\sqrt{-L} \varphi_k^{\ld}(r) e^{-i\ld z}|\ld|^n} 
\le 
Ct^{-1/2}   |\ld|^n \sqrt{(2k+n)|\ld|} e^{-(2k+n)|\ld|t/2}.
\end{align*}
This implies that  the right hand side of \eqref{eq:equiv WL} converges if $[W_j,\sqrt{-L}]$ is replaced by $W_j\sqrt{-L}$. Hence
we have
\[
W_j\sqrt{-L} h_t(r,z)=\int_{-\infty}^{\infty} \sum_{k=0}^{\infty}e^{-(2k+n)|\ld|t} W_j\sqrt{-L}\varphi_k^{\ld}(r) e^{-i\ld z}|\ld|^n d\ld.
\]

In order to show \eqref{eq:equiv WL}, it remains to consider $\sqrt{-L}W_j$. Using right invariant operators and integration by parts, we have
\begin{align*}
\abs{\sqrt{-L}W_j \,\wtvp_k^{\ld}(\mx)}
&=  
\abs{\sqrt{-L}W_j H_{t_0} \wtvp_k^{\ld}(\mx)}
=
\abs{ H_{t_0}  \sqrt{-\hat L}\hat W_j\,\wtvp_k^{\ld}(\mx) }\\
&= 
\abs{ \int_{\bH^n} h_{t_0}(\mx,\my)  \sqrt{-\hat L}\hat W_j\,\wtvp_k^{\ld}(\my) d\my}
\\ &=
\abs{ \int_{\bH^n}\hat W_j\sqrt{-\hat L}\,h_{t_0}(\mx,\my)  \,\wtvp_k^{\ld}(\my) d\my}\\
&\le 
 \int_{\bH^n} \abs{\hat W_j\sqrt{-\hat L}\,h_{t_0}(\mx,\my) }d\my \,\N{\wtvp_k^{\ld}}_{L^{\infty}}.
\end{align*}
 By Lemma \ref{lem:Schwartz}, we see that the integral $ \int_{\bH^n} \abs{\hat W_j\sqrt{-\hat L}\,h_{t_0}(\mx,\my) }d\my $ converges. This leads to \eqref{eq:equiv WL} with $[W_j,\sqrt{-L}]$ being replaced by $W_j\sqrt{-L}$ and hence \eqref{eq:equiv WL}.
\end{proof}
\begin{remark} In the same way, we can also prove that 
\begin{align}
\label{eq:equiv TZ}
2i\,\Tcal_jZ h_t(r,z)=\int_{-\infty}^{\infty} \sum_{k=0}^{\infty}e^{-(2k+n)|\ld|t} 2i\,\Tcal_jZ \varphi_k^{\ld}(r) e^{-i\ld z}|\ld|^n d\ld.
\end{align}
\end{remark}

\begin{proof}[Proof of Proposition \ref{prop:equi}]
 We first show that 
\begin{equation}\label{eq:pt}
[W_j, \sqrt{-L}]\, h_t(r,z)=2i\,\Tcal_jZ\,h_t(r,z).
\end{equation}
By Lemma \ref{lem:pt}, it suffices to show that,   for any $\ld\in \R$ and any $k \in \bN$,
\[
[W_j,\sqrt{-L}] \wtvp_k^{\lambda}=2i\,\Tcal_jZ\,\wtvp_k^{\lambda}.
\]
Before computation, we recall that $Z$ commutes with $L$ and $W_j$. Then $LZ\,\wtvp_k^{\lambda}=ZL\,\wtvp_k^{\lambda}=-(2k+n)|\ld|$, i.e., $Z\,\wtvp_k^{\lambda}$ is an eigenfunction of the eigenvalue $-(2k+n)|\ld|$ for $L$.  Thus we have
\begin{align*}
2i\,\Tcal_jZ\,\wtvp_k^{\lambda} &=2i\int_0^{+\infty} y P_y \brak{ W_j \sqrt{-L} +\sqrt{-L} W_j}P_y Z\,\wtvp_k^{\lambda}\,dy
\\&=
2i\int_0^{+\infty} y e^{-y\sqrt{-L}} \brak{W_j \sqrt{(2k+n)|\ld|} +\sqrt{-L} W_j}e^{-y\sqrt{(2k+n)|\ld|}}Z\,\wtvp_k^{\lambda}\, dy
\\&=
2i\int_0^{+\infty} y e^{-y\brak{\sqrt{-L}+\sqrt{(2k+n)|\ld|}}}  \brak{\sqrt{(2k+n)|\ld|} +\sqrt{-L}} W_jZ\,\wtvp_k^{\lambda}\,dy
\\&=
\int_0^{+\infty} y e^{-y\brak{\sqrt{-L}+\sqrt{(2k+n)|\ld|}}} \brak{\sqrt{(2k+n)|\ld|} +\sqrt{-L}} (LW_j-W_jL)\wtvp_k^{\lambda}\,dy
\\&=
\int_0^{+\infty} y e^{-y\brak{\sqrt{-L}+\sqrt{(2k+n)|\ld|}}} \brak{\sqrt{(2k+n)|\ld|} +\sqrt{-L}} (L+(2k+n)|\ld|)W_j\,\wtvp_k^{\lambda}\, dy
\\&=
\brak{\sqrt{(2k+n)|\ld|}-\sqrt{-L}} W_j\,\wtvp_k^{\lambda}=[W_j, \sqrt{-L}] \wtvp_k^{\lambda}. 
\end{align*}
Here, in the third equality, we use \eqref{eq:WL} and the fact that $Z$ commutes with $W_j$.

With this preparation, we can now prove \eqref{eq:commutator}.
By the left invariance, it is enough to prove the equality at $\mathbf x=0$. We notice that 
\begin{equation}\label{eq:commutator0}
[W_j,\sqrt{-L}] f(0)=\lim_{t\to 0} H_t [W_j,\sqrt{-L}] f(0). 
\end{equation}
Using integration by parts and also \eqref{eq:pt}, we have
\begin{align*}
H_t[ W_j,\sqrt{-L}]f(0)
&= \int_{\bH^n} h_t(\my) [W_j,\sqrt{- L}]f(\my) d\my\\
&=
- \int_{\bH^n} [W_j,\sqrt{- L}] h_t(\my) f(\my) d\my
\\ &=
- \int_{\bH^n} 2i\,\Tcal_j Z h_t(\my) f(\my) d\my\\
&= \int_{\bH^n} h_t(\my) 2i \,\Tcal_j Z  f(\my) d\my= H_t  2i \,\Tcal_j Zf(0),
\end{align*}
where the second-to-last equality  holds since $Z$ commutes with $L$. This gives us 
\[
\lim_{t\to 0} H_t [W_j,\sqrt{-L}] f(0)=2i \Tcal_j Zf(0),
\]
which leads to \eqref{eq:commutator0} and hence \eqref{eq:commutator}.

\end{proof}

Finally, we conclude  that
\begin{proposition}\label{Hei}
Let  $1\le j\le n$ and $f\in \Scal(\bH^n)$. Then we have
\[
\| \text{ } [ W_j, \sqrt{-L} ] f \text{  }  \|_p \le \sqrt{2}(p^*-1)\| Z f \|_p.
\]
\end{proposition}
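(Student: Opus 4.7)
The strategy is to combine Proposition \ref{prop:equi}, which rewrites the commutator as
\[
[W_j,\sqrt{-L}]f = 2i\,\Tcal_j Z f,
\]
with the $L^p$ estimate of Theorem \ref{thm:MT2} applied to $\Tcal_j$ acting on $Zf$. First I would observe that on $\bH^n$ the vector fields $X_j,Y_j$ are left-invariant with respect to Haar measure, hence $X_j^*=-X_j$ and $Y_j^*=-Y_j$, and the sublaplacian fits the framework of Section 2 with $V\equiv 0$. In particular
\[
\sqrt{-L}X_j + X_j\sqrt{-L} = \sqrt{-L}X_j - X_j^*\sqrt{-L},
\]
so the real and imaginary parts of $\Tcal_j$ coincide exactly with operators of the form treated in Theorem \ref{thm:MT}. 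Writing $W_j = X_j + iY_j$ this identifies $\Tcal_j = T_{X_j} + iT_{Y_j}$, which in the language of Theorem \ref{thm:MT2} is the operator $T_A$ associated to the complex $(2n+1)\times(2n+1)$ matrix $A = A_{X_j} + iA_{Y_j}$ (the sum of the two skew-symmetric building-block matrices of Lemma \ref{lem:CE}).

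The next step is to compute $\|A\|$. The only nonzero entries of $A$ lie in row and column $2n+1$: applied to a real vector $v \in \R^{2n+1}$, $A$ produces a vector whose $j$-th component is $-v_{2n+1}$, whose $(n+j)$-th component is $-iv_{2n+1}$, whose $(2n+1)$-th component is $v_j+iv_{n+j}$, and whose other entries vanish. Hence
\[
\|Av\|^2 = 2|v_{2n+1}|^2 + v_j^2 + v_{n+j}^2 \le 2\|v\|^2,
\]
with equality at $v=e_{2n+1}$, so $\|A\|=\sqrt{2}$. Applying the $V\equiv 0$ bound \eqref{TA-norm4} of Theorem \ref{thm:MT2} to $g=Zf$ yields
\[
\|\Tcal_j Zf\|_p \le \|A\|\,\frac{p^*-1}{2}\,\|Zf\|_p = \frac{\sqrt{2}}{2}(p^*-1)\|Zf\|_p,
\]
and combining this with Proposition \ref{prop:equi} gives exactly the claimed estimate.

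The one point requiring justification is that Theorem \ref{thm:MT2} is being applied with a \emph{complex} matrix $A$. This is handled by reproducing the proof of Lemma \ref{lem:CE}: the stochastic integral
\[
N_t = \int_0^{t\wedge\tau} A\,(\nabla,\partial_y)^{\mathrm T}Q(Zf)(Y_s,B_s)\,(d\beta_s,dB_s)
\]
is now a complex-valued martingale whose quadratic variation satisfies
\[
\langle N\rangle_t = 2\int_0^{t\wedge\tau}\brak{2|\partial_y Q(Zf)|^2 + |X_j Q(Zf)|^2 + |Y_j Q(Zf)|^2}\,ds \le 2\,\langle M^{Zf}\rangle_t,
\]
so $N$ is differentially subordinate to $\sqrt{2}\,M^{Zf}$, and the Burkholder-Wang inequality of Theorem \ref{thm:BW95}, which carries over verbatim to complex-valued martingales by splitting real and imaginary parts, produces the factor $\sqrt{2}(p^*-1)$ after the conditional-expectation contraction step. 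The main obstacle is purely bookkeeping: tracking the sign conventions so that the complex matrix $A$ is correctly identified, and carrying out the norm computation to obtain exactly $\sqrt{2}$ (rather than a suboptimal $2$ coming from the naive triangle inequality $\|\Tcal_j Zf\|_p\le\|T_{X_j}Zf\|_p+\|T_{Y_j}Zf\|_p$).
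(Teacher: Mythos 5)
Your proof is correct and follows essentially the same route as the paper's: invoke Proposition \ref{prop:equi} to rewrite the commutator as $2i\,\Tcal_j Zf$, realize $\Tcal_j = T_{X_j} + iT_{Y_j}$ as a Gundy-Varopoulos operator $T_A$ with a single complex matrix $A$, compute $\|A\|=\sqrt{2}$, and apply the $V\equiv 0$ bound of Theorem \ref{thm:MT2}. Your added discussion of why the complex-matrix case reduces to differential subordination of a two-dimensional real martingale is a worthwhile clarification of a point the paper leaves implicit, and you correctly note that separating the real and imaginary parts via the triangle inequality would only yield the weaker constant $2(p^*-1)$.
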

\begin{proof}
By Proposition \ref{prop:equi}, we have 
\[
\| \text{ } [ W_j, \sqrt{-L} ] f \text{  }  \|_p \le 2\| \Tcal_jZ f \|_p.
\]
It suffices to work on $\Tcal_j$. Following the proof of Theorem \ref{thm:MT}, we have Gundy-Varopoulos type probabilistic representation of $\Tcal_j$ as follows
\[
\Tcal_j f(x)=- \frac{1}{2} \lim_{y_0\to \infty} \bE_{y_0} \brak{ \int_0^{\tau} \Acal_j(\nabla,Z)^{\mathrm T} Q f(\Y_s, B_s) (d\beta_s,dB_s)\mid \Y_{\tau}=x},
\] 
where $\nabla=(X_1,\cdots,X_n, Y_1,\cdots, Y_n)$, $ (\Y_t)_{t\ge 0}$ is the diffusion process on $\bH^n$ with generator  $L$, $\beta_s$ is the Brownian motion on $\R^{2n}$, and $\Acal_j$ is a $(2n+1)\times (2n+1)$ matrix  as follows:
\[
a_{j(2n+1)}=1,\, a_{(n+j)(2n+1)}=i,\,a_{(2n+1)j}=-1,\, a_{(2n+1)(n+j)}=-i; \text { and otherwise }0.
\]
Notice that $\N{\Acal_j}=\sqrt2$, therefore by Theorem \ref{thm:MT2},
\[
\| \text{ } [ W_j, \sqrt{-L} ] f \text{  }  \|_p \le 2\| \Tcal_jZ f \|_p \le 2\N{\Acal_j} \frac{(p^*-1)}{2} \| Z f \|_p =  \sqrt2(p^*-1)\| Z f \|_p. 
\]
\end{proof}

\subsection{Example 3. $\SU(2)$}

Consider the Lie group $\SU(2)$, i.e., the group of $2\times2 $ complex unitary matrices of determinant $1$. Its Lie algebra $\su(2)$ consists of $2\times2 $ complex skew adjoint matrices of trace $0$. A basis of $\su(2)$ is formed by the Pauli matrices
\[
X= \begin{pmatrix}
    0 & 1 \\
    -1 & 0 
\end{pmatrix},
\quad 
Y= \begin{pmatrix}
    0 & i \\
    i &  0 
\end{pmatrix},
\quad
Z= \begin{pmatrix}
    i & 0 \\
    0 & -i 
\end{pmatrix},
\]
for which the commutation relations hold
\[
[X,Y]=2Z, \quad [Y,Z]=2X, \quad [Z,X]=2Y.
\]
Denote by $X,Y,Z$ the left invariant vector fields on $\SU(2)$ corresponding to the Pauli matrices. We shall be interested in the operator
\[
L=X^2+Y^2.
\]

Let $(H_t)_{t>0}=(e^{t L})_{t>0}$  be the heat semigroup generated by $L$ and $h_t(\mx)$ be the corresponding heat kernel at $0$.  Let  $(P_t)_{t>0}=(e^{-t\sqrt{-L}})_{t>0}$ be the Poisson semigroup and $p_t$ be the Poisson kernel at $0$. 
We use the cylindric coordinates introduced in \cite{CS01} 
\[
(r,\theta,z) \to \exp(r\cos \theta X+r\sin \theta Y) \exp(zZ)=\begin{pmatrix}
   \cos(r) e^{iz} & \sin(r) e^{i(\theta-z)} \\
    -\sin(r) e^{-i(\theta-z)} & \cos(r) e^{iz}
\end{pmatrix}
\]
with 
\[
0\le r\le \frac{\pi}2,\quad \theta\in [0,2\pi], \quad z\in [-\pi,\pi].
\]
The heat kernel at $0$ depends only on $r$ and $z$, which we denote by $h_t(r,z)$. The spectral decomposition of $h_t(r,z)$ can be found in \cite{BauB09}: for $t>0$, $0\le r<\frac{\pi}2$, $z\in [-\pi,\pi]$,
\[
h_t(r,z)=\sum_{n={-\infty}}^{+\infty} \sum_{k=0}^{+\infty} (2k+|n|+1) e^{-(4k(k+|n|+1)+2|n|)t} e^{inz} (\cos r)^{|n|} P_k^{0,|n|}(\cos 2r).
\]

Let $W=X+iY$ be the complex gradient. Then the Lie algebra structure gives us
\begin{equation}\label{eq:WL(SU)}
WL=(L-4iZ+4)W.
\end{equation}
Consider the operator
\[
\Tcal=\int_0^{+\infty} y P_y ( W \sqrt{-L} +\sqrt{-L} W)P_y dy.
\]

\begin{proposition} \label{prop:equiSU2}
For any smooth function $f$, there holds
\begin{equation}\label{eq:commutatorSU}
[W, \sqrt{-L}] f=\Tcal(4iZ+4)f.
\end{equation}
\end{proposition}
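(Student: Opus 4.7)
The plan is to mirror the proof of Proposition \ref{prop:equi}: first verify \eqref{eq:commutatorSU} on the eigenfunctions $\wtvp_{k,n}(r,z)=e^{inz}(\cos r)^{|n|}P_k^{0,|n|}(\cos 2r)$ that appear in the spectral decomposition of $h_t$ quoted above, then use a uniform-convergence argument analogous to Lemma \ref{lem:pt} to lift the identity to $h_t$, and finally extend to arbitrary smooth $f$ by applying $H_t$, integrating by parts, and letting $t\to 0$.

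For the spectral step, I would begin from the Lie algebra relations in the excerpt, which give \eqref{eq:WL(SU)}, equivalent to $[L,W]=(4iZ-4)W$, together with the new nontrivial identity $[Z,W]=-2iW$. On $\wtvp_{k,n}$, for which $L\wtvp_{k,n}=-\mu\,\wtvp_{k,n}$ with $\mu=4k(k+|n|+1)+2|n|$ and $Z\wtvp_{k,n}=in\,\wtvp_{k,n}$, the commutator identities give $ZW\wtvp_{k,n}=i(n-2)W\wtvp_{k,n}$ and hence $LW\wtvp_{k,n}=-\mu'W\wtvp_{k,n}$ with $\mu'=\mu+4n-4$ whenever $W\wtvp_{k,n}\neq 0$ (the lowest-weight cases where $W\wtvp_{k,n}=0$ are precisely those where $\mu'$ would be negative, and both sides of \eqref{eq:commutatorSU} vanish there). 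A direct evaluation of the defining integral then yields $\Tcal\,\wtvp_{k,n}=\frac{1}{\sqrt{\mu}+\sqrt{\mu'}}W\wtvp_{k,n}$, while $(4iZ+4)\wtvp_{k,n}=4(1-n)\wtvp_{k,n}$ and $[W,\sqrt{-L}]\wtvp_{k,n}=(\sqrt{\mu}-\sqrt{\mu'})W\wtvp_{k,n}$. Thus \eqref{eq:commutatorSU} on $\wtvp_{k,n}$ reduces to the algebraic identity $\mu-\mu'=4(1-n)$, which is immediate.

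Lifting from $\wtvp_{k,n}$ to $h_t$ would follow Lemma \ref{lem:pt} essentially verbatim; since $\SU(2)$ is compact the sums are countable and the eigenvalues grow fast enough to give uniform convergence of the formal series obtained by applying either $[W,\sqrt{-L}]$ or $\Tcal(4iZ+4)$ term-by-term to the spectral expansion of $h_t$. Once $[W,\sqrt{-L}]h_t=\Tcal(4iZ+4)h_t$ is in hand, I would use left invariance together with $[W,\sqrt{-L}]f(0)=\lim_{t\to 0}H_t[W,\sqrt{-L}]f(0)$, then integrate by parts twice as in the Heisenberg case: on the compact unimodular group $\SU(2)$ one has $W^{\mathrm T}=-W$ and $Z^{\mathrm T}=-Z$ with respect to the normalized Haar measure, and a short computation shows that both $[W,\sqrt{-L}]$ and $\Tcal(4iZ+4)$ are self-transposed.

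The main obstacle will be the self-transpose property of $\Tcal(4iZ+4)$: because $Z$ is not central in $\su(2)$, a nontrivial commutation $[Z,\Tcal]=-2i\Tcal$ arises (traced from $[Z,W]=-2iW$ and $[Z,L]=0$), and it is precisely this commutator that turns the naive transpose $(4iZ+4)^{\mathrm T}\Tcal^{\mathrm T}=(-4iZ+4)(-\Tcal)=(4iZ-4)\Tcal$ back into $\Tcal(4iZ+4)$. This algebraic twist is the essential new point not present in the Heisenberg analysis, where $Z$ was central and the analogous step was automatic. With it in hand, the chain $H_t[W,\sqrt{-L}]f(0)=\int h_t\,[W,\sqrt{-L}]f=\int[W,\sqrt{-L}]h_t\cdot f=\int\Tcal(4iZ+4)h_t\cdot f=H_t\Tcal(4iZ+4)f(0)$ is justified, and letting $t\to 0$ closes the proof.
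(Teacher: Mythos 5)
Your proposal is correct and follows the same route as the paper's proof: verify \eqref{eq:commutatorSU} on the spectral eigenfunctions $\Phi_{n,k}$ (using the Lie algebra relations to identify $W\Phi_{n,k}$ as an eigenvector of $L$ and $Z$ with shifted eigenvalues), lift to $h_t$ by uniform convergence of the eigenfunction expansion, and extend to general smooth $f$ by applying $H_t$, integrating by parts, and letting $t\to 0$. The one place where you go beyond the paper in a genuinely useful way is the last step: the paper dismisses it with ``similarly as on Heisenberg groups (details are neglected),'' but because $Z$ is not central in $\su(2)$, the transpose computation for $\Tcal(4iZ+4)$ is not automatic as it was on $\bH^n$; your observation that $[Z,\Tcal]=-2i\Tcal$ (from $[Z,W]=-2iW$ and $[Z,L]=0$) gives $(4iZ+4)^{\mathrm T}\Tcal^{\mathrm T}=(4iZ-4)\Tcal=\Tcal(4iZ+4)$ is exactly the algebraic twist needed to make the integration-by-parts chain close, and it correctly fills in what the paper left implicit.
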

\begin{proof}
Denote $\Phi_{n,k}(r,z)=e^{inz} (\cos r)^{|n|} P_k^{0,|n|}(\cos 2r)$. Then $\Phi_{n,k}$ is a eigenfunction of $L$ corresponding to the eigenvalue $-\ld_{n,k}=-4k(k+|n|+1)-2|n|$.

Similarly as on Heisenberg groups (details are neglected), it suffices to check \eqref{eq:commutatorSU} acting on eigenfunctions $\Phi_{n,k}(r,z)$. 
We first compute $\Tcal\Phi_{n,k}(r,z)$, 
\begin{align*}
\Tcal \Phi_{n,k} &=\int_0^{+\infty} y P_y ( W \sqrt{-L} +\sqrt{-L} W)P_y \Phi_{n,k} dy
\\&=
\int_0^{+\infty} y e^{-y\sqrt{-L}} ( W \sqrt{\ld_{n,k}} +\sqrt{-L} W)e^{-y\sqrt{\ld_{n,k}}}\Phi_{n,k} dy
\\&=
\int_0^{+\infty} y e^{-y(\sqrt{-L}+\sqrt{\ld_{n,k}})} (\sqrt{\ld_{n,k}} +\sqrt{-L}) W \Phi_{n,k} dy
\\&=
(\sqrt{-L}+\sqrt{\ld_{n,k}})^{-1} W\Phi_{n,k}. 
\end{align*}
Observe that $[L,Z]=0$, then $LZ\Phi_{n,k}=ZL\Phi_{n,k}=-\ld_{n,k} Z\Phi_{n,k}$, i.e., $Z\Phi_{n,k}$ is also an eigenfunction of eigenvalue $-\ld_{n,k}$. Hence the above computation also works for $Z\Phi_{n,k}$ and we obtain that
\[
\Tcal(4iZ+4)\Phi_{n,k}=(\sqrt{-L}+\sqrt{\ld_{n,k}})^{-1} W(4iZ+4)\Phi_{n,k}.
\]
Observe also $[W,Z]=2iW$, which leads to $(4iZ-4)W=W(4iZ+4)$. Consequently
\begin{equation}\label{eq:TPhi}
\Tcal(4iZ+4)\Phi_{n,k}=(\sqrt{-L}+\sqrt{\ld_{n,k}})^{-1} (4iZ-4)W\Phi_{n,k}.
\end{equation}

Next compute the commutator acting on $\Phi_{n,k}$, we have
\[
[W, \sqrt{-L}] \Phi_{n,k}=W\sqrt{\ld_{n,k}}\Phi_{n,k}-\sqrt{-L}W\Phi_{n,k}=(\sqrt{\ld_{n,k}}-\sqrt{-L}) W\Phi_{n,k}.
\]
In addition, 
\begin{align*}
(\sqrt{-L}+\sqrt{\ld_{n,k}})(\sqrt{\ld_{n,k}}-\sqrt{-L}) W\Phi_{n,k}&=(\ld_{n,k}+L)W\Phi_{n,k}\\
&=LW\Phi_{n,k}-WL\Phi_{n,k}\\
&=(4iZ-4) W\Phi_{n,k},
\end{align*}
where the last equality is due to \eqref{eq:WL(SU)}.

Summarize the above three equalities we conclude the proof for \eqref{eq:commutatorSU}.
\end{proof}

As a conclusion, we have
\begin{proposition}\label{SU2}
Let $1<p<\infty$. Then for any smooth function $f$,
\[
\| \text{ } [ W, \sqrt{-L} ] f \text{  }  \|_p \le  2\sqrt{2}(p^*-1)  \| (iZ+1) f \|_p.
\]
\end{proposition}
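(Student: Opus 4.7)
The plan is to combine Proposition \ref{prop:equiSU2} with the general martingale transform bound of Theorem \ref{thm:MT2}, following exactly the template used for the Heisenberg group in the proof of Proposition \ref{Hei}. First, by Proposition \ref{prop:equiSU2}, the identity $[W,\sqrt{-L}]f = \mathcal{T}(4iZ+4)f = 4\,\mathcal{T}(iZ+1)f$ reduces the proof to the estimate
\[
\| \mathcal{T} g \|_p \;\le\; \frac{\sqrt{2}}{2}\,(p^*-1)\,\| g \|_p, \qquad g=(iZ+1)f,
\]
since multiplying by $4$ gives the desired constant $2\sqrt{2}(p^*-1)$.

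Next, I would recognize $\mathcal{T}$ as a conditional expectation of a martingale transform in the sense of Lemma \ref{lem:CE} and Theorem \ref{thm:MT2}. Since $X^*=-X$ and $Y^*=-Y$ on $\SU(2)$ (because $X,Y$ are divergence-free left-invariant fields for bi-invariant Haar measure), the operators $T_X, T_Y$ from Lemma \ref{lem:CE}, associated with the $3\times 3$ matrices $A_1$ (entries $a_{13}=-1$, $a_{31}=1$) and $A_2$ (entries $a_{23}=-1$, $a_{32}=1$), satisfy
\[
T_X + i T_Y \;=\; \int_0^{+\infty} y\, P_y\bigl(\sqrt{-L}\,W + W\sqrt{-L}\bigr)P_y\, dy \;=\; \mathcal{T}.
\]
Hence $\mathcal{T}=T_{\mathcal{A}}$ with $\mathcal{A}:=A_1+iA_2$, the matrix whose only nonzero entries are $a_{13}=-1$, $a_{23}=-i$, $a_{31}=1$, $a_{32}=i$.

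Then I would compute the operator norm of $\mathcal{A}$. A direct multiplication shows
\[
\mathcal{A}^{*}\mathcal{A} = \begin{pmatrix} 1 & i & 0 \\ -i & 1 & 0 \\ 0 & 0 & 2 \end{pmatrix},
\]
whose eigenvalues are $0,2,2$, so $\|\mathcal{A}\|=\sqrt{2}$. Since $V\equiv 0$ here, applying Theorem \ref{thm:MT2} (inequality \eqref{TA-norm4}) yields
\[
\|\mathcal{T}g\|_p \;=\; \|T_{\mathcal{A}}g\|_p \;\le\; \|\mathcal{A}\|\,\frac{p^*-1}{2}\,\|g\|_p \;=\; \frac{\sqrt{2}\,(p^*-1)}{2}\,\|g\|_p,
\]
and combining with the reduction step gives the result.

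There is essentially no hard step left once Proposition \ref{prop:equiSU2} is available; the only thing that requires care is the spectral-norm computation of $\mathcal{A}$ and the small bookkeeping check that the diffusion on $\SU(2)$ satisfies the hypotheses of Lemma \ref{lem:CE}/Theorem \ref{thm:MT2} (nonexplosion, essential self-adjointness, and that $\mathcal{S}(\SU(2))$—or an appropriate replacement by smooth functions on the compact manifold—is stable under $H_t$, $P_y$, $X$, $Y$, and $\sqrt{-L}$). For $\SU(2)$ compactness makes these automatic when one works on $C^\infty(\SU(2))$, which is the class in which Proposition \ref{prop:equiSU2} was verified via the eigenfunction expansion $\Phi_{n,k}$. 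So the only potentially subtle point—and the one I would double-check—is that the probabilistic representation of $\mathcal{T}$ as a projection of an $L^p$-bounded martingale transform carries over verbatim to this compact Lie group setting.
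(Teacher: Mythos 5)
Your proposal is correct and matches the paper's proof: both reduce via Proposition \ref{prop:equiSU2} to bounding $\mathcal{T}(4iZ+4)f$, represent $\mathcal{T}$ as a Gundy--Varopoulos projection with a $3\times3$ matrix $\mathcal{A}$ of norm $\sqrt{2}$, and then apply inequality \eqref{TA-norm4} from Theorem \ref{thm:MT2}. The only difference is cosmetic (an overall sign convention on $\mathcal{A}$, which leaves $\|\mathcal{A}\|$ unchanged), plus your slightly more explicit identification $\mathcal{T}=T_X+iT_Y$ and spectral-norm computation, which just fill in the bookkeeping the paper leaves implicit.
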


\begin{proof}
By Proposition \ref{prop:equiSU2}, we have 
\[
\| \text{ } [ W, \sqrt{-L} ] f \text{  }  \|_p \le \| \Tcal (4iZ+4) f \|_p.
\]
 Following the proof of Theorem \ref{thm:MT}, we have Gundy-Varopoulos type probabilistic representation of $\Tcal$ as follows
\[
\Tcal_j f(x)=- \frac{1}{2} \lim_{y_0\to \infty} \bE_{y_0} \brak{ \int_0^{\tau} \Acal_j(\nabla,Z)^{\mathrm T} Q f(\Y_s, B_s) (d\beta_s,dB_s)\mid \Y_{\tau}=x},
\] 
where $\nabla=(X, Y)$, $ (\Y_t)_{t\ge 0}$ is the diffusion process on $\SU(2)$ with generator  $L$, $\beta_s$ is the Brownian motion on $\R^{2}$, and $\Acal$ is a $3\times 3$ matrix  as follows:
\[
a_{13}=1,\, a_{23}=i,\,a_{31}=-1,\, a_{32}=-i; \text { and otherwise }0.
\]
Notice that $\N{\Acal}=\sqrt2$, therefore by Theorem \ref{thm:MT2},
\begin{align*}
\| \text{ } [ W, \sqrt{-L} ] f \text{  }  \|_p &\le \| \Tcal (4iZ+4) f \|_p \\
&\le \N{\Acal} \frac{(p^*-1)}{2} \| (4iZ+4) f \|_p\\
&=   2\sqrt2(p^*-1)  \| (iZ+1) f \|_p. 
\end{align*}
\end{proof}

\section{Riesz transforms on vector bundles}

Our general results are easily generalized in the framework of vector bundles. This framework is more adapted to the study of Riesz transforms vectors.  

\subsection{A general theorem}

Let $\mathbb{M}$ be a $d$-dimensional  smooth complete Riemannian
manifold and let $\mathcal{E}$ be a finite-dimensional vector
bundle over $\mathbb{M}$. We denote by $\Gamma ( \mathbb{M},
\mathcal{E} )$ the space of smooth  sections of this bundle. Let now $\nabla$
denote a metric connection on $\mathcal{E}$. We consider an operator on $\Gamma ( \mathbb{M},
\mathcal{E} )$  that can be written as
\[
\mathcal{L}=\mathcal{F}+\nabla_{0}+ \sum_{i=1}^d \nabla_{i}^2,
\]
where
\[
\nabla_i=\nabla_{X_i}, \quad 0 \le i \le d,
\]
and the $X_i$'s  are smooth vector fields on $\mathbb{M}$ and 
$\mathcal{F}$  is a smooth symmetric and non positive potential (that is a smooth section of the
bundle $\mathbf{End}(\mathcal{E})$). We will assume that $\mathcal{L}$ is non-positive and essentially self-adjoint on the space $\Gamma_0(\M,\mathcal{E})$ of smooth and compactly supported sections. We consider then a first order differential operator $d_a$ on $\Gamma ( \mathbb{M},
\mathcal{E} )$ that can be written as
\[
d_a=\sum_{i=1}^d a_i \nabla_{X_i},
\]
where $a_1,\cdots,a_d$ are smooth sections of the bundle $\mathbf{End}(\mathcal{E})$. Our main assumptions  are that 
\[
d_a \mathcal{L} \eta =\mathcal{L} d_a \eta,\quad \eta \in \Gamma ( \mathbb{M},
\mathcal{E} ),
\]
and that 
\[
\| d_a \eta \|^2 \le C \sum_{i=1}^d \| \nabla_{X_i} \eta \|^2, \quad \eta \in \Gamma ( \mathbb{M},
\mathcal{E} ),
\]
for some constant $C\ge 0$. Several instances of such situations will be illustrated in the sequel. Our main theorem is the following:

\begin{theorem} \label{JKNM}
For $1<p<\infty$,
\[
\| d_a (-\mathcal{L})^{-1/2} \eta \|_p \le 6C(p^*-1) \| \eta \|_p.
\]
\end{theorem}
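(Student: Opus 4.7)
The plan is to mirror the scalar Gundy-Varopoulos construction of Section 2, working fiberwise in $\mathcal{E}_{Y_0}$ via stochastic parallel transport, and then to invoke the Hilbert-space-valued versions of the differential subordination inequalities already recorded as Theorems 2.3 and 2.4. The hypothesis $d_a\mathcal{L}=\mathcal{L}d_a$ will play exactly the role that commutativity of $X_i$ and $L$ played in the Lie-group example of Section 2.2, and the hypothesis $\|d_a\eta\|^2\le C\sum_i\|\nabla_{X_i}\eta\|^2$ will enter as the pointwise subordination estimate for the martingale transform.

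First I would set up the probabilistic picture on the bundle. Let $(Y_t)_{t\ge 0}$ be the diffusion on $\M$ with generator $X_0+\sum_{i=1}^d X_i^2$, let $\tau_t:\mathcal{E}_{Y_0}\to \mathcal{E}_{Y_t}$ denote stochastic parallel transport along $Y$ induced by $\nabla$, and let $(M_t)_{t\ge 0}$ be the $\mathbf{End}(\mathcal{E}_{Y_0})$-valued multiplicative functional solving $dM_t = M_t\,\tau_t^{-1}\mathcal{F}(Y_t)\tau_t\,dt$, $M_0=\mathrm{Id}$. The bundle-valued Feynman-Kac formula then gives $e^{t\mathcal{L}}\eta(x)=\bE_x\edg{M_t\tau_t^{-1}\eta(Y_t)}$ and, via the subordination formula used in Lemma 2.2, the analogous representation for $Q^{\mathcal{F}}\eta(x,y):=e^{-y\sqrt{-\mathcal{L}}}\eta(x)$. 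With an independent Brownian motion $(B_t)$ started at $y_0$ and hitting time $\tau$ of $0$, the same arguments as in Lemma 2.2 show that
\[
M^\eta_t := M_{t\wedge\tau}\,\tau_{t\wedge\tau}^{-1}\,Q^{\mathcal{F}}\eta(Y_{t\wedge\tau},B_{t\wedge\tau})
\]
is a square integrable $\mathcal{E}_{Y_0}$-valued martingale whose It\^o differential is
\[
dM^\eta_t = M_t\tau_t^{-1}\Bigl(\sum_{i=1}^d(\nabla_{X_i}Q^{\mathcal{F}}\eta)(Y_t,B_t)\,d\beta_t^i + (\partial_y Q^{\mathcal{F}}\eta)(Y_t,B_t)\,dB_t\Bigr).
\]

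Next I would introduce the Gundy-Varopoulos operator associated with $d_a$,
\[
T^{d_a}\eta(x) := -\tfrac{1}{2}\lim_{y_0\to\infty}\bE_{y_0}\!\edg{M_\tau\!\int_0^\tau\!M_s^{-1}\Bigl(\sum_{i=1}^d a_i(\nabla_{X_i}Q^{\mathcal{F}}\eta)\,d\beta_s^i - \partial_y Q^{\mathcal{F}}\eta\,dB_s\Bigr)\,\bigg|\,Y_\tau=x},
\]
and verify, exactly as in Lemma 2.2 (pair against a test section $\xi$, use It\^o's isometry and self-adjointness of $\mathcal{L}$, and exploit the commutation $d_a\mathcal{L}=\mathcal{L}d_a$ so that $d_a$ commutes with $\sqrt{-\mathcal{L}}$ and each $P_y$), the identification
\[
T^{d_a}\eta \;=\; \tfrac{1}{4}\,d_a(-\mathcal{L})^{-1/2}\eta,
\]
in complete analogy with the Lie-group computation $T_i=\tfrac12 X_i(\sqrt{-L})^{-1}$ of Section 2.2.

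The $L^p$ bound then reduces to martingale transform theory applied in $\mathcal{E}_{Y_0}$. The transform $N_t$ appearing inside the conditional expectation is differentially subordinate, up to the constant $\sqrt{C}$, to the $\R\oplus\mathcal{E}^{\oplus d}$-valued martingale obtained by suppressing the $a_i$'s, since the hypothesis $\|d_a\eta\|^2\le C\sum_i\|\nabla_{X_i}\eta\|^2$ yields the pointwise bound $\|\sum_i a_iv_i\|^2\le C\sum_i\|v_i\|^2$. Handling the $\mathcal{F}$-drift by the Lenglart-L\'epingle-Pratelli submartingale inequality exactly as in the proof of Theorem 1.1 (applied to $\|M^\eta_{t\wedge\tau}\|$, which is a non-negative submartingale because $\mathcal{F}$ is non-positive and symmetric) and then invoking the Hilbert-space form of Theorem 2.4 gives $\|N_\tau\|_p\le 3(p^*-1)\sqrt{C}\,\|\eta\|_p$ for $1<p\le 2$; duality, using that $T^{d_a}$ has formal adjoint with the same structure with $a_i$ replaced by $a_i^*$, extends this to $2<p<\infty$. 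Combining with the identification above produces the asserted estimate. The main obstacle is Step 1: rigorously intertwining the parallel transport $\tau_t$, the matrix-valued multiplicative functional $M_t$ for the non-scalar potential $\mathcal{F}$, and the vertical Brownian motion so that the subordination representation for $Q^{\mathcal{F}}$ is preserved and $M^\eta$ is genuinely a martingale in $\mathcal{E}_{Y_0}$; once that is in place, the identification of $T^{d_a}$ and the subordination estimate are direct.
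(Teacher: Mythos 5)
The overall strategy you describe — Feynman-Kac with the $\mathbf{End}(\mathcal{E})$-valued multiplicative functional and parallel transport, the martingale $M^\eta_t$, differential subordination, Lenglart--L\'epingle--Pratelli to absorb the drift from $\mathcal{F}$, and duality for $p>2$ — is indeed the paper's strategy. However, the specific Gundy--Varopoulos representation you wrote down for $d_a(-\mathcal{L})^{-1/2}$ is wrong, and this is not a cosmetic point: it is exactly the place where the bundle argument must differ from the scalar one.

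You define the transformed martingale to have $d\beta_s^i$-integrand $a_i\nabla_{X_i}Q^{\mathcal F}\eta$ and $dB_s$-integrand $-\partial_yQ^{\mathcal F}\eta$, i.e.\ you distribute $d_a$ over the horizontal slots. Pairing this against $M^\alpha$ and applying It\^o isometry and the Green function gives
\[
2\int_{\M}\int_0^\infty y\Bigl[\sum_{i=1}^d\langle\nabla_{X_i}Q\alpha,\,a_i\nabla_{X_i}Q\eta\rangle-\langle\partial_yQ\alpha,\partial_yQ\eta\rangle\Bigr]dy\,d\mu,
\]
and neither term reduces to $\langle\alpha,d_a(-\mathcal{L})^{-1/2}\eta\rangle$: the second term yields a multiple of $\langle\alpha,\eta\rangle$ (not a Riesz transform at all), and the first produces $\sum_i\nabla_{X_i}^*a_i\nabla_{X_i}$, which cannot be untangled from the $a_i$ without pointwise commutation assumptions the hypotheses do not supply. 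The antisymmetric $A_i$ structure of the scalar case works because it pairs the \emph{same} index $i$ across the horizontal/vertical slots and then uses $X_i^*=-X_i$; there is no analogue of that cancellation once $d_a=\sum a_i\nabla_{X_i}$ is spread across the $d\beta^i$'s. Moreover, even setting aside the identification, your subordination step would also fail: the transformed martingale would have quadratic variation $\int[\sum_i\|a_i\nabla_{X_i}Q\eta\|^2+\|\partial_yQ\eta\|^2]\,ds$, and the hypothesis $\|\sum_i a_i\nabla_{X_i}\eta\|^2\le C\sum_i\|\nabla_{X_i}\eta\|^2$ controls the norm of the \emph{sum}, not the sum of the norms.

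The paper avoids both problems by putting all of $d_a$ into the single $dB_s$ slot and discarding the $d\beta^i$ integrals entirely:
\[
d_a(-\mathcal{L})^{-1/2}\eta(x)=-2\lim_{y_0\to\infty}\bE_{y_0}\Bigl[\theta_\tau^{-1}\mathcal{M}_\tau^*\int_0^\tau(\mathcal{M}_s^*)^{-1}\theta_s\,d_aQ\eta(Y_s,B_s)\,dB_s\;\Big|\;Y_\tau=x\Bigr].
\]
When paired against $M^\alpha$, only the $dB_s$ component of $dM^\alpha$ survives, producing $2\int\int y\langle\partial_yQ\alpha,d_aQ\eta\rangle\,dy\,d\mu$; then the commutation $d_a\mathcal{L}=\mathcal{L}d_a$ (hence $d_aP_y=P_yd_a$) and $\int_0^\infty y\,P_y\sqrt{-\mathcal L}\,P_y\,dy=\tfrac14(-\mathcal L)^{-1/2}$ give the identity, and the subordination condition becomes precisely $\|d_aQ\eta\|^2\le C\sum_i\|\nabla_{X_i}Q\eta\|^2\le C\,(\text{QV-density of }M^\eta)$, which is exactly what the hypothesis provides. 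You should also note the normalization: the correct representation carries the prefactor $-2$, not $-\tfrac12$ (nor does it yield $\tfrac14 d_a(-\mathcal{L})^{-1/2}$); this is where the factor $6$ in the final estimate comes from.
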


The proof follows the same lines as in the previous section.
Let $(Y_t)_{t\ge 0}$ be the diffusion process on $\M$ with generator  $\sum_{i=1}^d X_i^2+X_0$ started from the distribution $\mu$. We assume that $(Y_t)_{t\ge 0}$ is non explosive, thus as before  it can then be constructed via the Stratonovitch stochastic differential equation
\[
dY_t=X_0(t)dt+\sum_{i=1}^d X_i(Y_t)\circ d\beta_t^i, 
\]
where $\beta_t=(\beta_t^1,\cdots,\beta_t^d)$ is the Brownian motion on $\R^d$ with generator $\sum_{i=1}^d \frac{\partial^2}{\partial x_i^2}$.

Let $(B_t)_{t\ge 0}$ be a one-dimensional Brownian motion on $\R$, with generator $\frac{d^2}{dy^2}$ starting from $y_0>0$, which is independent from $(Y_t)_{t\ge 0}$. Then $\bE(B_t^2)=2t$. Set the stopping time
$$
\tau=\inf\{t>0, \,B_t=0\}.
$$

We denote by $H_t=e^{tL}$ the heat semigroup and by $P_y=e^{-y \sqrt{-L}}$ the Poisson semigroup. In that framework, there is a well known Feynman-Kac representation for the semigroup $H_t$.
 
More precisely,  consider the stochastic parallel transport along $Y_t$, $\theta_t: \mathcal{E}_{Y_t} \to \mathcal{E}_{Y_0}$ and the multiplicative functional $(\mathcal{M} _t)_{t \ge 0}$, solution of the equation
 \[
 \frac{d \mathcal{M}_t}{dt}= \mathcal{M}_t \theta_t  \mathcal{F}  \theta_t^{-1}, \quad \mathcal{M}_0=\mathbf{Id}.
 \]
By the Feynman-Kac formula, the heat semigroup $H_t$ acting on $\eta \in \Gamma_0^\infty(\M,\mathcal{E})$ can then be written as 
\[
H_t\eta(x)=\bE_x\brak{ \mathcal{M}_t \theta_t \eta (Y_t)}.
\]
For $\eta \in \Gamma_0^\infty(\M,\mathcal{E})$, denote 
\[
Q\eta(x,y)=P_y \eta(x)=e^{-y\sqrt{-\mathcal{L}}} \eta(x),
\]
and 
\[
M_s^\eta=  \mathcal{M}_{s\wedge \tau}  \theta_{s \wedge \tau} Q\eta (Y_{s\wedge \tau},B_{s\wedge \tau}).
\]
As in Lemma \ref{martingale:lk}, we can easily prove that $M^\eta$ is a martingale.
We have then the following Gundy-Varopoulos type representation :

\begin{lemma}\label{lem:CE2}
Let $\eta \in \Gamma_0^\infty(\M,\mathcal{E})$. For almost all $x\in \M$, we have
\[
d_a  (-\mathcal{L})^{-1/2} \eta (x)=-2 \lim_{y_0\to \infty} \bE_{y_0} \brak{ \theta_\tau^{-1} \mathcal{M}_\tau^*\int_0^{\tau}  (\mathcal{M}_s^*)^{-1} \theta_s   d_a Q f(Y_s, B_s) dB_s\mid Y_{\tau}=x}.
\]
\end{lemma}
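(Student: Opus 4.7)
The plan is to mirror the scalar proof of Lemma~\ref{lem:CE}. First, I pair the claimed identity against an arbitrary test section $\xi \in \Gamma_0^\infty(\M,\mathcal{E})$ and integrate over $\M$. Using that the law of $Y_\tau$ is $\mu$, the isometry of the parallel transport $\theta_\tau$, and the adjointness of $\mathcal{M}_\tau^*$, this rewrites as
\[
\int_\M \langle \xi(x),\,\mathrm{RHS}(x)\rangle\, d\mu(x) = -2\lim_{y_0\to\infty}\bE_{y_0}\!\left\langle \mathcal{M}_\tau\theta_\tau\xi(Y_\tau),\,\int_0^\tau (\mathcal{M}_s^*)^{-1}\theta_s\, d_a Q\eta(Y_s,B_s)\,dB_s\right\rangle.
\]
By the bundle-valued analogue of Lemma~\ref{martingale:lk} already asserted in the text, the process $M_s^\xi := \mathcal{M}_{s\wedge\tau}\theta_{s\wedge\tau}Q\xi(Y_{s\wedge\tau},B_{s\wedge\tau})$ is an $L^2$-martingale whose Itô decomposition has $dB$-component $\mathcal{M}_s\theta_s\,\partial_y Q\xi(Y_s,B_s)\,dB_s$, with the bounded-variation terms cancelling because $Q\xi = P_y\xi$ satisfies $(\partial_y^2+\mathcal{L})Q\xi = 0$ while the $\mathcal{F}$-part is absorbed by the ODE defining $\mathcal{M}$.

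The second step is the Itô-isometry computation. Using the identity $\langle \mathcal{M}_s\theta_s u,\,(\mathcal{M}_s^*)^{-1}\theta_s v\rangle_{\mathcal{E}_{Y_0}} = \langle u,v\rangle_{\mathcal{E}_{Y_s}}$ (from the isometry of $\theta_s$ and $\mathcal{M}_s^*\cdot(\mathcal{M}_s^*)^{-1} = \mathrm{Id}$) together with $d[B,B]_s = 2\,ds$, the expected bracket collapses to
\[
\bE_{y_0}\!\left\langle M_\tau^\xi,\,\int_0^\tau (\mathcal{M}_s^*)^{-1}\theta_s\, d_a Q\eta \,dB_s\right\rangle = 2\,\bE_{y_0}\int_0^\tau \langle \partial_y Q\xi,\, d_a Q\eta\rangle(Y_s,B_s)\,ds.
\]
Independence of $B$ and $Y$, stationarity $Y_s\sim\mu$, and the Green function $y_0\wedge y$ of $B$ killed at $0$ turn this into $2\int_\M\int_0^\infty (y_0\wedge y)\langle \partial_y Q\xi,\, d_a Q\eta\rangle(x,y)\,dy\,d\mu(x)$. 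Letting $y_0\to\infty$ and using $\partial_y Q\xi = -\sqrt{-\mathcal{L}}\,P_y\xi$, the commutation $d_a P_y = P_y d_a$ (a consequence of $d_a\mathcal{L}=\mathcal{L}d_a$), self-adjointness of $P_y$ and $\sqrt{-\mathcal{L}}$, and the spectral identity $\int_0^\infty y\,\sqrt{-\mathcal{L}}\,e^{-2y\sqrt{-\mathcal{L}}}\,dy = \tfrac14(-\mathcal{L})^{-1/2}$, the result is $-\tfrac12\int_\M\langle\xi,\,d_a(-\mathcal{L})^{-1/2}\eta\rangle\,d\mu$. Multiplying by the prefactor $-2$ from the statement produces exactly $\int_\M\langle\xi,\,d_a(-\mathcal{L})^{-1/2}\eta\rangle\,d\mu$, and since $\xi$ is arbitrary the identity holds $\mu$-a.e.

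The main obstacle is not the algebraic unwinding but the first step: one must verify rigorously, under just the essential self-adjointness of $\mathcal{L}$ on $\Gamma_0(\M,\mathcal{E})$, that $M^\eta$ and $M^\xi$ are genuine $L^2$-martingales with the stated $dB$-components, and that the $y_0\to\infty$ limit commutes with both the expectation and the spatial integration. These justifications follow the template of Lemma~\ref{martingale:lk}, relying on the bundle-valued Feynman--Kac formula $H_t\eta(x) = \bE_x\!\left(\mathcal{M}_t\theta_t\eta(Y_t)\right)$ and the subordination identity $\lambda_u(ds) = \frac{u}{2\sqrt{\pi}}e^{-u^2/4s}s^{-3/2}\,ds$ relating the heat and Poisson semigroups.
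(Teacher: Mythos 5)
Your proposal is correct and follows essentially the same route as the paper: duality against a test section, the martingale decomposition of $M^\xi$ with $dB$-component $\mathcal{M}_s\theta_s\,\partial_y Q\xi\,dB_s$, the It\^o isometry (with $d[B,B]_s=2\,ds$), the occupation-time/Green-function identity giving the weight $y_0\wedge y$, and finally the commutation of $d_a$ with $\mathcal{L}$ together with the spectral evaluation of $\int_0^\infty y\,\sqrt{-\mathcal{L}}\,P_{2y}\,dy=\tfrac14(-\mathcal{L})^{-1/2}$. The only difference is that you spell out the last step, which the paper compresses into a single sentence.
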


\begin{proof}
Let $\alpha\in\Gamma_0^\infty(\M,\mathcal{E})$ and $\alpha\in\Gamma_0^\infty(\M,\mathcal{E})$. Note that 
\[
M_{\tau}^\eta=\mathcal{M}_{ \tau}  \theta_{ \tau}\alpha(Y_{\tau}).
\]
By  It\^o isometry, we have
\begin{align*}
&\int_{\M} \left\langle  \alpha(x), \mathbb{E}\brak{ \theta_\tau^{-1} \mathcal{M}_\tau^*\int_0^{\tau}   (\mathcal{M}_s^*)^{-1} \theta_s  d_a Q \eta(Y_s, B_s) dB_s\mid Y_{\tau}=x} \right\rangle d\mu(x)
\\&=
\bE_{y_0} \brak{ \left\langle \mathcal{M}_{ \tau}  \theta_{ \tau}\alpha(Y_{\tau}) , \int_0^{\tau}  (\mathcal{M}_s^*)^{-1} \theta_s  d_a Q \eta(Y_s, B_s) dB_s \right\rangle}
\\&=
2 \bE_{y_0} \brak{\int_0^{\tau} \langle \partial_y Q\alpha(Y_s,B_s) , d_aQ \eta(Y_s, B_s) \rangle ds}
\\ &=
2\int_{\M} \int_0^{\infty} (y_0 \wedge y) \langle \partial_y Q\alpha(x,y) ,d_aQ \eta(x,y)\rangle dy d\mu(x).
\end{align*}
The last equality follows from the facts that the Green function of the Brownian motion is killed at $0$ and $Y_s$ is distributed according to $\mu$. 
Using finally the commutation between $d_a$ and $\mathcal{L}$ one deduces
\[
2\int_{\M} \int_0^{\infty}y \langle \partial_y Q\alpha(x,y) ,d_aQ \eta(x,y)\rangle dy d\mu(x)=-\frac{1}{2} \int_\M  \left\langle  \alpha(x), d_a  (-\mathcal{L})^{-1/2} \eta (x) \right\rangle d\mu(x).
\]
\end{proof}

The proof of Theorem \ref{JKNM} now follows the lines of the proof of Theorem \ref{thm:MT}.

\subsection{Example 1. The Riesz transform on  forms}

Let $\mathbb{M}$ be a $d$-dimensional  smooth, oriented, complete and stochastically complete Riemannian
manifold. We first briefly recall some basic facts on
Fermion calculus on the exterior algebra of a finite
dimensional vector space, as can be found in Section 2.2.2 of
\cite{Ros}. 
 Let $V$ be a
$d$-dimensional Euclidean vector space. We denote $V^\ast$ its
dual and
\[
\wedge V^\ast=\bigoplus_{k \ge 0} \wedge^k V^\ast,
\]
the exterior algebra. If $u \in V^\ast$, we denote $a^\ast_u$ the
map $\wedge V^\ast \rightarrow \wedge V^\ast$, such that $a^\ast_u
(\omega)=u \wedge \omega$. The dual map is denoted $a_u$. Let now
$\theta_1$, ..., $\theta_d$ be an orthonormal basis of $V^\ast$.
We denote $a_i=a_{\theta_i}$. We have the basic rules of Fermion
calculus
\[
\{ a_i ,a_j \}=0, \{ a^\ast_i ,a^\ast_j \}=0, \{ a_i ,a^\ast_j
\}=\delta_{ij},
\]
where $\{ \cdot , \cdot \}$ stands for the anti-commutator: $\{
a_i ,a_j \}=a_i a_j +a_j a_i$. If $I$ and $J$ are two words with
$1 \le i_1 < \cdots < i_k \le d$ and $1 \le j_1 < \cdots < j_l \le
d$, we denote
\[
A_{IJ}=a^\ast_{i_1} \cdots a^\ast_{i_k}   a_{j_1} \cdots a_{j_l}.
\]
The family of all the possible $A_{IJ}$ forms a basis of the
$2^{2d}$-dimensional vector space $\mathbf{End} \left( \wedge
V^\ast\right)$.   We can carry the Fermionic construction on the tangent spaces of
the manifold $\mathbb{M}$. Let $e_i$ be a local orthonormal frame
and let $\theta_i$ be its dual frame. In that frame, we can express the exterior derivative as
\begin{align}\label{exterior}
d=\sum_i a_i^* \nabla_{e_i} .
\end{align}
Let us therefore observe that $\| d \eta \|^2 \le \sum_{i=1}^d \| \nabla_{e_i} \eta \|^2$.
The curvature endomorphism (Weitzenb\"ock curvature)
is then defined by
\[
\mathcal{F}=-\sum_{ijkl} R_{ijkl}a_i^\ast a_j a_k^\ast a_l
\]
where
\[
R_{ijkl}=\left\langle R(e_j,e_k)e_l,e_i \right\rangle,
\]
with $R$ Riemannian curvature of $\mathbb{M}$. The celebrated Weitzenb\"ock formula writes
\[
\mathcal{L}=\Delta - \mathcal{F},
\]
where $\mathcal{L} =-d d^\ast -d^\ast d$ is the Hodge-DeRham Laplacian
and $\Delta$ the Bochner Laplacian. Let us recall that if $e_i$ is
a local orthonormal frame, we have the following explicit formula
for $\Delta$:
\begin{align}\label{laplacian}
\Delta=\sum_{i=1}^d (\nabla_{e_i}\nabla_{e_i}- \nabla_{
\nabla_{e_i} e_i }),
\end{align}
where $\nabla$ is the Levi-Civita connection. The following theorem is then an application of Theorem \ref{JKNM}.

\begin{theorem}
Assume $\mathcal{F} \ge 0$, then
\[
\| d (-\mathcal{L})^{-1/2} \eta \|_p \le 6(p^*-1) \| \eta \|_p
\]
\end{theorem}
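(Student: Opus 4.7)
The plan is to verify that the Hodge--de Rham Laplacian on forms fits the general framework of Theorem \ref{JKNM} with $d$ playing the role of $d_a$, and then invoke that theorem directly.

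First I would use the Weitzenb\"ock formula $\mathcal{L} = \Delta - \mathcal{F}$ together with the local expression \eqref{laplacian} for the Bochner Laplacian to write
\[
\mathcal{L} = (-\mathcal{F}) + \nabla_{X_0} + \sum_{i=1}^d \nabla_{e_i}^2, \qquad X_0 := -\sum_{i=1}^d \nabla_{e_i} e_i,
\]
in any local orthonormal frame $(e_i)$. The first summand plays the role of the potential in the framework, and under the hypothesis $\mathcal{F} \ge 0$ the potential $-\mathcal{F}$ is symmetric and non-positive, as required. Essential self-adjointness of $\mathcal{L}$ on $\Gamma_0(\mathbb{M}, \wedge^\bullet T^*\mathbb{M})$ holds because $\mathbb{M}$ is complete and stochastically complete with $\mathcal{F}$ bounded below, so that the Feynman--Kac construction of the Hodge heat semigroup is valid.

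Next I would match $d$ with a $d_a$ operator. The formula \eqref{exterior} reads $d = \sum_{i=1}^d a_i^* \nabla_{e_i}$, which is exactly of the form $d_a = \sum_i a_i \nabla_{X_i}$ with $a_i := a_i^*$ (the wedge-with-$\theta_i$ endomorphism) and $X_i := e_i$. The required pointwise bound
\[
\|d\eta\|^2 \le \sum_{i=1}^d \|\nabla_{e_i}\eta\|^2
\]
is the Fermion-algebra observation already recorded immediately after \eqref{exterior}, giving $C = 1$ in Theorem \ref{JKNM}. The commutation $d\mathcal{L} = \mathcal{L}d$ is classical: since $\mathcal{L} = -(dd^* + d^*d)$ and $d^2 = 0$,
\[
d\mathcal{L} = -d(dd^* + d^*d) = -dd^*d = -(dd^* + d^*d)d = \mathcal{L}d.
\]

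With all hypotheses of Theorem \ref{JKNM} verified and $C=1$, the theorem yields the estimate $\|d(-\mathcal{L})^{-1/2}\eta\|_p \le 6(p^*-1)\|\eta\|_p$, which is the claim. The main point where geometry enters the argument is through the sign of the Weitzenb\"ock curvature: the framework of Theorem \ref{JKNM} requires a non-positive potential, and the potential in the Weitzenb\"ock decomposition is $-\mathcal{F}$, so the assumption $\mathcal{F} \ge 0$ is precisely what makes the theorem applicable. All other ingredients (the commutation, the expression $d = \sum_i a_i^* \nabla_{e_i}$, and the Fermion-algebraic norm bound) are purely algebraic and independent of the geometry of $\mathbb{M}$, which is exactly why the resulting $L^p$ bound is dimension-free.
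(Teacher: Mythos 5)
Your proof is correct and follows essentially the same route as the paper: both arguments simply verify the hypotheses of Theorem \ref{JKNM} for $d_a = d = \sum_i a_i^* \nabla_{e_i}$ with $C=1$ and potential $-\mathcal{F} \le 0$, then invoke that theorem. You spell out the verification (Weitzenb\"ock decomposition, the Fermion-algebra norm bound, the commutation $d\mathcal{L}=\mathcal{L}d$ via $d^2=0$) in more detail than the paper, which just cites Theorem \ref{JKNM} and adds a remark that Lemma \ref{lem:CE2} is coordinate-free so the local frame expressions cause no difficulty.
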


\begin{remark}
Let us observe that the expressions \ref{exterior} and \ref{laplacian} are only defined locally in a given frame, however Lemma \ref{lem:CE2} is coordinate free and therefore holds in the present setting (see Theorem 3.2 in \cite{LiXD08}).
\end{remark}

\subsection{Example 2. The Riesz transform on spinors}

We first review some basic constructions in spin geometry.  Let $V$ be an oriented $d$ dimensional Euclidean space. We assume
that the dimension $d$ is even. The Clifford algebra
$\mathbf{Cl}(V)$ over $V$ is the algebra
\[
\mathbf{T} (V)=\mathbb{R} \oplus V \oplus (V \otimes V) \oplus
\cdots
\]
quotient by the relations
\begin{align}\label{relation-clifford}
u \otimes v +v \otimes u +2 \langle u,v \rangle 1 =0.
\end{align}
Let $e_1,...,e_d$ be an oriented basis of $V$. The family
\[
e_{i_1} ... e_{i_k}, \quad 0 \le k \le d, \quad 1 \le i_1 <...<i_k
\le d,
\]
forms a basis of the vector space $\mathbf{Cl}(V)$ which  is therefore of dimension
$2^d$. In $\mathbf{T} (V)$ we can distinguish elements that are
even from elements that are odd. This leads to a decomposition:
\[
\mathbf{Cl}(V)=\mathbf{Cl}^-(V) \oplus \mathbf{Cl}^+(V),
\]
with $V \subset \mathbf{Cl}^-(V)$. A Clifford module is a vector space $E$ over $\mathbb{R}$ (or
$\mathbb{C}$) that is also a $\mathbf{Cl}(V)$-module and that
admits a direct sum decomposition
\[
E=E^- \oplus E^+
\]
with
\[
\mathbf{Cl}^-(V) \cdot E^- \subset E^-, \quad \mathbf{Cl}^+(V)
\cdot E^+ \subset E^+.
\]
It can be shown that there is a unique Clifford module $S$, called
the spinor module over $V$ such that:
\[
\mathbf{End} (S) \simeq \mathbb{C} \otimes \mathbf{Cl}(V).
\]
In particular $\dim S=2^{\frac{d}{2}}$. 
If $\psi \in \mathfrak{so} (V)$, that is if $\psi : V \rightarrow
V$ is a skew-symmetric map, we define
\[
D\psi=\frac{1}{2}\sum_{1 \le i<j \le d} \langle \psi (e_i), e_j
\rangle e_i e_j \in \mathbf{Cl}(V),
\]
and observe that $D[\psi_1,\psi_2]=[D\psi_1,D\psi_2]$. The set
$\mathbf{Cl}^2(V)=D \mathfrak{so} (V)$ is therefore a Lie algebra.
The Lie group $\mathbf{Spin} (V)$ is the group obtained by
exponentiating $\mathbf{Cl}^2(V)$ inside the Clifford algebra
$\mathbf{Cl}(V)$; It is the two-fold universal covering of the
orthogonal group $\mathbf{SO}(V)$. It can also be described as the
set of $a \in \mathbf{Cl}(V)$ such that:
\[
a=v_1 ... v_{2k}, \quad 1 \le k \le \frac{d}{2},\quad v_i \in V,
\quad \parallel v_i \parallel=1.
\]

We now come back to the manifold setting and carry the above
constructions on the cotangent spaces of a spin manifold. So, let $\mathbb{M}$ be a  $d$-dimensional, oriented, complete and stochastically complete  Riemannian manifold. We assume that $d$ is even. We furthermore
assume that $\mathbb{M}$ admits a spin structure: That is, there
exists a principal bundle on $\mathbb{M}$ with structure group
$\mathbf{Spin}(\mathbb{R}^d)$ such that the bundle charts are
compatible with the universal covering
$\mathbf{Spin}(\mathbb{R}^d) \rightarrow
\mathbf{SO}(\mathbb{R}^d)$. This bundle will be denoted
$\mathcal{SP} (\mathbb{M})$ and $\pi$ will denote the canonical
surjection. The spin bundle $\mathcal{S}$ over $\mathbb{M}$ is the
vector bundle such that for every $x \in \mathbb{M}$,
$\mathcal{S}_x$ is the spinor module over the cotangent space
$\mathbf{T}^*_x \mathbb{M}$. At each point $x$, there is therefore
a natural action of $\mathbf{Cl} (\mathbf{T}^*_x \mathbb{M})\simeq
\mathbf{End} (\mathcal{S}_x)$; this action will be denoted by
$\mathbf{c}$. On $\mathcal{S}$, there is a canonical elliptic first-order
differential operator called the Dirac operator and denoted
$\mathbf{D}$. In a local orthonormal frame $e_i$, with dual frame
$e_i^*$,  the Dirac operator is given by
\[
\mathbf{D}=\sum_{i=1}^d c(e^*_i) \nabla_{e_i},
\]
where $\nabla$ is the Levi-Civita connection. As a consequence, $\| \mathbf{D} \eta \|^2 \le \sum_{i=1}^d \| \nabla_{e_i} \eta \|^2$.  We also have an analogue
of Weitzenb\"ock formula which is the celebrated Lichnerowicz
formula (see Theorem 3.52 in \cite{Ber-Ge-Ve}):
\[
-\mathbf{D}^2=\Delta -\frac{s}{4},
\]
where $s$ is the scalar curvature of $\mathbb{M}$ and $\Delta$ is
given in a local orthonormal frame $e_i$ by
\[
\Delta=\sum_{i=1}^d (\nabla_{e_i}\nabla_{e_i}- \nabla_{
\nabla_{e_i} e_i }).
\]

The following theorem is then an application of Theorem \ref{JKNM}.

\begin{theorem}
Assume that the scalar curvature $s \ge 0$, then
\[
\| \mathbf{D} (-\mathbf{D}^2)^{-1/2} \eta \|_p \le 6(p^*-1) \| \eta \|_p.
\]
\end{theorem}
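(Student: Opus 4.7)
The strategy is to verify that the Dirac operator framework fits exactly the hypotheses of Theorem \ref{JKNM}, applied to the spin bundle $\mathcal{S}$ with the Levi-Civita connection lifted to $\mathcal{S}$. First I would take $\mathcal{L}=-\mathbf{D}^2$ and invoke the Lichnerowicz formula $-\mathbf{D}^2=\Delta-s/4$ to write, in a local orthonormal frame $e_i$,
\[
\mathcal{L}=-\frac{s}{4}+\sum_{i=1}^d\bigl(\nabla_{e_i}\nabla_{e_i}-\nabla_{\nabla_{e_i}e_i}\bigr),
\]
so that, after collecting the first-order terms into a single drift $X_0=-\sum_i\nabla_{e_i}e_i$, this has exactly the shape $\mathcal{F}+\nabla_0+\sum_i\nabla_i^2$ required in Section 3.1, with potential $\mathcal{F}=-s/4$. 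The hypothesis $s\ge 0$ precisely says $\mathcal{F}\le 0$, and since $\mathbf{D}$ is a symmetric operator on a complete spin manifold whose square is a non-positive elliptic operator, essential self-adjointness of $\mathcal{L}=-\mathbf{D}^2$ on $\Gamma_0(\mathbb{M},\mathcal{S})$ follows from the standard Wolf/Chernoff completeness argument.

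Next I would set $d_a=\mathbf{D}=\sum_{i=1}^d c(e_i^*)\nabla_{e_i}$, so that in the notation of Theorem \ref{JKNM} the endomorphisms are $a_i=c(e_i^*)$, which are smooth sections of $\mathbf{End}(\mathcal{S})$. The required commutation relation $d_a\mathcal{L}\eta=\mathcal{L}d_a\eta$ is immediate, since $\mathbf{D}$ trivially commutes with $-\mathbf{D}^2$. For the pointwise norm bound, the Clifford relations and the fact that the $c(e_i^*)$ are unitary give, for each spinor $\eta$,
\[
\|\mathbf{D}\eta\|^2=\Bigl\|\sum_{i=1}^d c(e_i^*)\nabla_{e_i}\eta\Bigr\|^2\le\sum_{i=1}^d\|\nabla_{e_i}\eta\|^2,
\]
as already noted in the paper just before the theorem, so the constant $C$ in Theorem \ref{JKNM} is equal to $1$.

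With these verifications in place, the conclusion is an immediate application of Theorem \ref{JKNM}: the operator $d_a(-\mathcal{L})^{-1/2}=\mathbf{D}(-\mathbf{D}^2)^{-1/2}$ satisfies
\[
\|\mathbf{D}(-\mathbf{D}^2)^{-1/2}\eta\|_p\le 6(p^*-1)\|\eta\|_p
\]
for $1<p<\infty$. The only genuinely delicate points are the verification of essential self-adjointness of $\mathbf{D}^2$ on $\Gamma_0(\mathbb{M},\mathcal{S})$ and the coordinate-free validity of the Gundy--Varopoulos representation in Lemma \ref{lem:CE2}; the first is standard under completeness, and the second is ensured because the stochastic parallel transport $\theta_t$ and the multiplicative functional $\mathcal{M}_t$ associated with the potential $-s/4$ are globally defined on the spin bundle, so the expression for $d_a(-\mathcal{L})^{-1/2}\eta$ is intrinsic and the application of the Burkholder type bound of Theorem \ref{thm:BW95} proceeds exactly as in the proof of Theorem \ref{thm:MT}.
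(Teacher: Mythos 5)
Your proposal mirrors the paper's own (very brief) argument: it is precisely the verification that the spin bundle with $\mathcal{L}=-\mathbf{D}^2$ and $d_a=\mathbf{D}$ satisfies the hypotheses of Theorem \ref{JKNM}. The structural checks you perform are correct — the Lichnerowicz decomposition $-\mathbf{D}^2=\Delta-s/4$ gives the required shape $\mathcal{F}+\nabla_0+\sum_i\nabla_i^2$ with $\mathcal{F}=-s/4\le 0$ when $s\ge 0$ and $X_0=-\sum_i\nabla_{e_i}e_i$; the commutation $d_a\mathcal{L}=\mathcal{L}d_a$ is trivial because $\mathbf{D}$ commutes with $\mathbf{D}^2$; and essential self-adjointness follows from completeness. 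Your remark about the local-frame nature of the decomposition being harmless because Lemma \ref{lem:CE2} is intrinsic also matches the paper's remark in the forms case.

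The one step that does not survive scrutiny is the pointwise bound $\|\mathbf{D}\eta\|^2\le\sum_{i=1}^d\|\nabla_{e_i}\eta\|^2$, i.e.\ the assertion that $C=1$. You justify it by ``the Clifford relations and the fact that the $c(e_i^*)$ are unitary,'' but unitarity alone only gives the triangle/Cauchy--Schwarz bound with a factor $d$. Writing $v_i=\nabla_{e_i}\eta$ and expanding,
\[
\Bigl|\textstyle\sum_i c(e_i^*)v_i\Bigr|^2=\sum_i|v_i|^2-\sum_{i\ne j}\langle v_i, c(e_i^*)c(e_j^*)v_j\rangle,
\]
and the cross sum has no definite sign, because $c(e_i^*)^2=-\mathrm{Id}$. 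This is exactly where the spin case differs from the exterior-form case that precedes it in the paper: there $d=\sum_i a_i^*\nabla_{e_i}$ with $(a_i^*)^2=0$ and $\{a_i,a_j^*\}=\delta_{ij}$, so the cross terms collapse to $-|\sum_i a_i v_i|^2\le 0$ and the bound with $C=1$ follows. For Clifford multiplication the cross sum can be positive: taking $v_i=-c(e_i^*)w$ gives $\sum_i c(e_i^*)v_i=dw$, hence $|\sum_i c(e_i^*)v_i|^2=d\sum_i|v_i|^2$. So the best pointwise constant is $C=d$, not $C=1$. (An $L^2$ reading does not help either: integrating the Lichnerowicz identity gives $\|\mathbf{D}\eta\|_{L^2}^2=\sum_i\|\nabla_{e_i}\eta\|_{L^2}^2+\tfrac14\langle s\eta,\eta\rangle$, the opposite inequality under $s\ge0$.) This pointwise bound is what feeds the differential-subordination step behind Theorem \ref{JKNM}, so without it the stated dimension-free constant $6(p^*-1)$ is not justified. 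The paper itself asserts the $C=1$ bound without proof, so you have inherited the gap rather than created it, but your proposal should not present ``Clifford relations and unitarity'' as a derivation — that reasoning fails, and the gap deserves to be flagged rather than glossed.
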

                         
\bibliographystyle{plain}
\bibliography{bibliography.bib}

\begin{thebibliography}{10}

\bibitem{Ale92}
G.~Alexopoulos.
\newblock An application of homogenization theory to harmonic analysis:
  {H}arnack inequalities and {R}iesz transforms on {L}ie groups of polynomial
  growth.
\newblock {\em Canad. J. Math.}, 44(4):691--727, 1992.

\bibitem{MR1650585}
N.~Arcozzi.
\newblock Riesz transforms on compact {L}ie groups, spheres and {G}auss space.
\newblock {\em Ark. Mat.}, 36(2):201--231, 1998.

\bibitem{ACDH04}
P.~Auscher, T.~Coulhon, X.~T. Duong, and S.~Hofmann.
\newblock Riesz transform on manifolds and heat kernel regularity.
\newblock {\em Ann. Sci. {\'E}cole Norm. Sup. (4)}, 37(6):911--957, 2004.

\bibitem{AMR08}
P.~Auscher, A.~McIntosh, and E.~Russ.
\newblock Hardy spaces of differential forms on {R}iemannian manifolds.
\newblock {\em J. Geom. Anal.}, 18(1):192--248, 2008.

\bibitem{BB13}
R.~Ba\~nuelos and F.~Baudoin.
\newblock Martingale transforms and their projection operators on manifolds.
\newblock {\em Potential Anal.}, 38(4):1071--1089, 2013.

\bibitem{BO15}
R.~Ba\~nuelos and A.~Os{\c e}kowski.
\newblock Sharp martingale inequalities and applications to {R}iesz transforms
  on manifolds, {L}ie groups and {G}auss space.
\newblock {\em J. Funct. Anal.}, 269(6):1652--1713, 2015.

\bibitem{BW95}
R.~Ba\~nuelos and G.~Wang.
\newblock Sharp inequalities for martingales with applications to the
  {B}eurling-{A}hlfors and {R}iesz transforms.
\newblock {\em Duke Math. J.}, 80(3):575--600, 1995.

\bibitem{Bak87}
D.~Bakry.
\newblock {\'E}tude des transformations de {R}iesz dans les vari{\'e}t{\'e}s
  riemanniennes {\`a} courbure de {R}icci minor{\'e}e.
\newblock In {\em S{\'e}minaire de {P}robabilit{\'e}s, {XXI}}, volume 1247 of
  {\em Lecture Notes in Math.}, pages 137--172. Springer, Berlin, 1987.

\bibitem{BBBC}
D.~Bakry, F.~Baudoin, M.~Bonnefont, and D.~Chafa\"\i.
\newblock On gradient bounds for the heat kernel on the {H}eisenberg group.
\newblock {\em J. Funct. Anal.}, 255(8):1905--1938, 2008.

\bibitem{BauB09}
F.~Baudoin and M.~Bonnefont.
\newblock The subelliptic heat kernel on {${\rm SU}(2)$}: representations,
  asymptotics and gradient bounds.
\newblock {\em Math. Z.}, 263(3):647--672, 2009.

\bibitem{BauB16}
F.~Baudoin and M.~Bonnefont.
\newblock Reverse {P}oincar{\'e} inequalities, isoperimetry, and {R}iesz
  transforms in {C}arnot groups.
\newblock {\em Nonlinear Anal.}, 131:48--59, 2016.

\bibitem{BG13}
F.~Baudoin and N.~Garofalo.
\newblock A note on the boundedness of {R}iesz transform for some subelliptic
  operators.
\newblock {\em Int. Math. Res. Not. IMRN}, (2):398--421, 2013.

\bibitem{Ber-Ge-Ve}
N.~Berline, E.~Getzler, and M.~Vergne.
\newblock {\em Heat kernels and {D}irac operators}, volume 298 of {\em
  Grundlehren der Mathematischen Wissenschaften [Fundamental Principles of
  Mathematical Sciences]}.
\newblock Springer-Verlag, Berlin, 1992.

\bibitem{CarbD13}
A.~Carbonaro and O.~Dragi{\v c}evi{\'c}.
\newblock Bellman function and linear dimension-free estimates in a theorem of
  {B}akry.
\newblock {\em J. Funct. Anal.}, 265(7):1085--1104, 2013.

\bibitem{Car07}
G.~Carron.
\newblock Riesz transforms on connected sums.
\newblock {\em Ann. Inst. Fourier (Grenoble)}, 57(7):2329--2343, 2007.
\newblock Festival Yves Colin de Verdi{\`e}re.

\bibitem{CCH06}
G.~Carron, T.~Coulhon, and A.~Hassell.
\newblock Riesz transform and {$L^p$}-cohomology for manifolds with {E}uclidean
  ends.
\newblock {\em Duke Math. J.}, 133(1):59--93, 2006.

\bibitem{CCFR17}
L.~Chen, T.~Coulhon, J.~Feneuil, and E.~Russ.
\newblock Riesz transform for {$1\le p\le 2$} without {G}aussian heat kernel
  bound.
\newblock {\em J. Geom. Anal.}, 27(2):1489--1514, 2017.

\bibitem{CMO15}
P.~Chen, J.~Magniez, and E.~M. Ouhabaz.
\newblock The {H}odge--de {R}ham {L}aplacian and {$L^p$}-boundedness of {R}iesz
  transforms on non-compact manifolds.
\newblock {\em Nonlinear Anal.}, 125:78--98, 2015.

\bibitem{CD99}
T.~Coulhon and X.~T. Duong.
\newblock Riesz transforms for {$1\leq p\leq 2$}.
\newblock {\em Trans. Amer. Math. Soc.}, 351(3):1151--1169, 1999.

\bibitem{CD03}
T.~Coulhon and X.~T. Duong.
\newblock Riesz transform and related inequalities on noncompact {R}iemannian
  manifolds.
\newblock {\em Comm. Pure Appl. Math.}, 56(12):1728--1751, 2003.

\bibitem{CMZ96}
T.~Coulhon, D.~M{\"u}ller, and J.~Zienkiewicz.
\newblock About {R}iesz transforms on the {H}eisenberg groups.
\newblock {\em Math. Ann.}, 305(2):369--379, 1996.

\bibitem{CS01}
M.~Cowling and A.~Sikora.
\newblock A spectral multiplier theorem for a sublaplacian on {$\rm SU(2)$}.
\newblock {\em Math. Z.}, 238(1):1--36, 2001.

\bibitem{Dev14}
B.~Devyver.
\newblock A {G}aussian estimate for the heat kernel on differential forms and
  application to the {R}iesz transform.
\newblock {\em Math. Ann.}, 358(1-2):25--68, 2014.

\bibitem{GS15}
C.~Guillarmou and D.~A. Sher.
\newblock Low energy resolvent for the {H}odge {L}aplacian: applications to
  {R}iesz transform, {S}obolev estimates, and analytic torsion.
\newblock {\em Int. Math. Res. Not. IMRN}, (15):6136--6210, 2015.

\bibitem{IM96}
T.~Iwaniec and G.~Martin.
\newblock Riesz transforms and related singular integrals.
\newblock {\em J. Reine Angew. Math.}, 473:25--57, 1996.

\bibitem{MR580107}
E.~Lenglart, D.~L\'epingle, and M.~Pratelli.
\newblock Pr\'esentation unifi\'ee de certaines in\'egalit\'es de la th\'eorie
  des martingales.
\newblock In {\em Seminar on {P}robability, {XIV} ({P}aris, 1978/1979)
  ({F}rench)}, volume 784 of {\em Lecture Notes in Math.}, pages 26--52.
  Springer, Berlin, 1980.
\newblock With an appendix by Lenglart.

\bibitem{LiHQ99}
H.-Q. Li.
\newblock La transformation de {R}iesz sur les vari{\'e}t{\'e}s coniques.
\newblock {\em J. Funct. Anal.}, 168(1):145--238, 1999.

\bibitem{LiHQ03II}
H.-Q. Li.
\newblock Analyse sur les vari{\'e}t{\'e}s cuspidales.
\newblock {\em Math. Ann.}, 326(4):625--647, 2003.

\bibitem{LiHQ03I}
H.-Q. Li and N.~Lohou{\'e}.
\newblock Transform{\'e}es de {R}iesz sur une classe de vari{\'e}t{\'e}s {\`a}
  singularit{\'e}s coniques.
\newblock {\em J. Math. Pures Appl. (9)}, 82(3):275--312, 2003.

\bibitem{LiXD08}
X.-D. Li.
\newblock Martingale transforms and {$L^p$}-norm estimates of {R}iesz
  transforms on complete {R}iemannian manifolds.
\newblock {\em Probab. Theory Related Fields}, 141(1-2):247--281, 2008.

\bibitem{LiXD10II}
X.-D. Li.
\newblock {$L^p$}-estimates and existence theorems for the
  {$\overline\partial$}-operator on complete {K}{\"a}hler manifolds.
\newblock {\em Adv. Math.}, 224(2):620--647, 2010.

\bibitem{LiXD10I}
X.-D. Li.
\newblock Riesz transforms on forms and {$L^p$}-{H}odge decomposition on
  complete {R}iemannian manifolds.
\newblock {\em Rev. Mat. Iberoam.}, 26(2):481--528, 2010.

\bibitem{LiXD14II}
X.-D. Li.
\newblock Erratum to: {M}artingale transforms and {$L^p$}-norm estimates of
  {R}iesz transforms on complete {R}iemannian manifolds [mr2372971].
\newblock {\em Probab. Theory Related Fields}, 159(1-2):405--408, 2014.

\bibitem{LiXD14I}
X.-D. Li.
\newblock Erratum to ``{R}iesz transforms on forms and {$L^p$}-{H}odge
  decomposition on complete {R}iemanian manifolds'' [mr2677005].
\newblock {\em Rev. Mat. Iberoam.}, 30(1):369--370, 2014.

\bibitem{Mey76}
P.~A. Meyer.
\newblock D{\'e}monstration probabiliste de certaines in{\'e}galit{\'e}s de
  {L}ittlewood-{P}aley. {I}. {L}es in{\'e}galit{\'e}s classiques.
\newblock pages 125--141. Lecture Notes in Math., Vol. 511, 1976.

\bibitem{Mey84}
P.-A. Meyer.
\newblock Transformations de {R}iesz pour les lois gaussiennes.
\newblock In {\em Seminar on probability, {XVIII}}, volume 1059 of {\em Lecture
  Notes in Math.}, pages 179--193. Springer, Berlin, 1984.

\bibitem{MPR15}
D.~M{\"u}ller, M.~M. Peloso, and F.~Ricci.
\newblock Analysis of the {H}odge {L}aplacian on the {H}eisenberg group.
\newblock {\em Mem. Amer. Math. Soc.}, 233(1095):vi+91, 2015.

\bibitem{MR0312140}
S.~K. Pichorides.
\newblock On the best values of the constants in the theorems of {M}. {R}iesz,
  {Z}ygmund and {K}olmogorov.
\newblock {\em Studia Math.}, 44:165--179. (errata insert), 1972.
\newblock Collection of articles honoring the completion by Antoni Zygmund of
  50 years of scientific activity, II.

\bibitem{RT16}
L.~Roncal and S.~Thangavelu.
\newblock Hardy's inequality for fractional powers of the sublaplacian on the
  {H}eisenberg group.
\newblock {\em Adv. Math.}, 302:106--158, 2016.

\bibitem{Ros}
S.~Rosenberg.
\newblock {\em The {L}aplacian on a {R}iemannian manifold}, volume~31 of {\em
  London Mathematical Society Student Texts}.
\newblock Cambridge University Press, Cambridge, 1997.
\newblock An introduction to analysis on manifolds.

\bibitem{Ste70}
E.~M. Stein.
\newblock {\em Topics in harmonic analysis related to the {L}ittlewood-{P}aley
  theory}.
\newblock Annals of Mathematics Studies, No. 63. Princeton University Press,
  Princeton, N.J.; University of Tokyo Press, Tokyo, 1970.

\bibitem{Str83}
R.~S. Strichartz.
\newblock Analysis of the {L}aplacian on the complete {R}iemannian manifold.
\newblock {\em J. Funct. Anal.}, 52(1):48--79, 1983.

\bibitem{vanNV17}
J.~van Neerven and R.~Versendaal.
\newblock {$L^p$}-analysis of the {H}odge-{D}irac operator associated with
  {W}itten {L}aplacians on complete {R}iemannian manifolds.
\newblock {\em J. Geom. Anal.}, 2017.

\end{thebibliography}
\end{document}